\renewcommand{\theenumi}{\alph{enumi}}
\renewcommand{\theenumii}{\roman{enumii}}
\renewcommand{\p@enumi}{}
\renewcommand{\p@enumii}{}
\renewcommand{\p@enumiii}{}
\renewcommand{\epsilon}{\ensuremath{\varepsilon}}
\renewcommand{\to}{\ensuremath{\longrightarrow}}
\newcommand{\R}{\ensuremath{\mathbb R}}
\newcommand{\rp}{\ensuremath{{\mathbb R}P^2}}
\newcommand{\N}{\ensuremath{\mathbb N}}
\newcommand{\Z}{\ensuremath{\mathbb Z}}
\newcommand{\St}[1][n]{\ensuremath{\mathbb S}^{#1}}
\newcommand{\quat}[1][8]{\ensuremath{\mathcal{Q}_{#1}}}
\def\@enum@{\list{\csname label\@enumctr\endcsname}%
           {\usecounter{\@enumctr}\def\makelabel##1{
\normalfont\ignorespaces\emph{{##1}~}}
\setlength{\labelsep}{3pt}
\setlength{\parsep}{0pt}
\setlength{\itemsep}{0pt}
\setlength{\leftmargin}{0pt}
\setlength{\labelwidth}{0pt}
\setlength{\listparindent}{\parindent}%
\setlength{\itemsep}{0pt}
\setlength{\itemindent}{0pt}
\setlength{\topsep}{3pt plus 1pt minus 1 pt}}}
\renewcommand\theenumi{\@alph\c@enumi}
\renewcommand\theenumii{\@alph\c@enumii}
\renewcommand\theenumiii{\@alph\c@enumiii}
\renewcommand\theenumiv{\@alph\c@enumiv}
\newcommand{\map}[4][\to]{\ensuremath{{#2} \colon\thinspace {#3} #1 {#4}}}
\DeclareRobustCommand*{\up}[1]{\textsuperscript{#1}}
\newcommand{\brak}[1]{\ensuremath{\left\{ #1 \right\}}}
\newcommand{\ang}[1]{\ensuremath{\left\langle #1\right\rangle}}
\newcommand{\set}[2]{\ensuremath{\brak{#1 \,\mid\, #2}}}
\newcommand{\setang}[2]{\ensuremath{\ang{#1 \,\mid\, #2}}}
\newcommand{\setangr}[2]{\ensuremath{\ang{#1 \,\left\lvert \, #2 \right.}}}
\newcommand{\setangl}[2]{\ensuremath{\ang{\left. #1 \,\right\rvert \, #2}}}
\newcommand{\setr}[2]{\ensuremath{\brak{#1 \,\left\lvert \, #2 \right.}}}
\newcommand{\setl}[2]{\ensuremath{\brak{\left. #1 \,\right\rvert \, #2}}}
\newtheoremstyle{theoremm}{}{}{\itshape}{}{\scshape}{.}{ }{}
\theoremstyle{theoremm}
\newtheorem{thm}{Theorem}
\newtheorem{lem}[thm]{Lemma}
\newtheorem{prop}[thm]{Proposition}
\newtheorem{cor}[thm]{Corollary}
\newtheoremstyle{remarkk}{}{}{}{}{\scshape}{.}{ }{}
\theoremstyle{remarkk}
\newtheorem{rem}[thm]{Remark}
\newcommand{\relem}[1]{Lemma~\protect\ref{lem:#1}}
\newcommand{\repr}[1]{Proposition~\protect\ref{prop:#1}}
\newcommand{\reco}[1]{Corollary~\protect\ref{cor:#1}}
\newcommand{\resec}[1]{Section~\protect\ref{sec:#1}}
\newcommand{\req}[1]{equation~(\protect\ref{eq:#1})}
\newcommand{\reqref}[1]{(\protect\ref{eq:#1})}
\newcommand{\rerem}[1]{Remark~\protect\ref{rem:#1}}
\begin{document}

\title{The Borsuk-Ulam theorem for maps into a surface}

\author{DACIBERG~LIMA~GON\c{C}ALVES\\
Departamento de Matem\'atica - IME-USP,\\
Caixa Postal~66281~-~Ag.~Cidade de S\~ao Paulo,\\ 
CEP:~ 05314-970 - S\~ao Paulo - SP - Brazil.\\
e-mail:~\texttt{dlgoncal@ime.usp.br}\vspace*{4mm}\\
JOHN~GUASCHI\\
Laboratoire de Math\'ematiques Nicolas Oresme UMR CNRS~\textup{6139},\\
Universit\'e de Caen BP 5186,
14032 Caen Cedex, France,\\
and\\
Instituto de Matem\'aticas, UNAM,
Le\'on \#2, altos, col.\ centro,\\
Oaxaca de Ju\'arez, Oaxaca,
C.P. 68000, Mexico.\\
e-mail:~\texttt{guaschi@math.unicaen.fr}}

\date{24th July 2009: revised 5th December 2009}

\maketitle

\begin{abstract}
\noindent


Let $(X, \tau, S)$ be a triple, where $S$ is a compact, connected surface without boundary, and $\tau$ is a free cellular involution on a $CW$-complex $X$. The triple $(X, \tau, S)$ is said to satisfy the \emph{Borsuk-Ulam property} if for every continuous map $\map{f}{X}S$, there exists a point $x\in X$ satisfying $f(\tau(x))=f(x)$. In this paper, we formulate this property in terms of a relation in the $2$-string braid group $B_{2}(S)$ of $S$. If $X$ is a compact, connected surface without boundary, we use this criterion to classify all triples $(X, \tau, S)$ for which the Borsuk-Ulam property holds.  We also consider various cases where $X$ is not necessarily a surface without boundary, but has the property that $\pi_1(X/\tau)$ is isomorphic to the fundamental group of such a surface. 
If $S$ is different from the $2$-sphere $\St[2]$ and the real projective plane $\rp$, then we show that the Borsuk-Ulam property does not hold for $(X, \tau, S)$ unless either $\pi_1(X/\tau)\cong \pi_{1}(\rp)$, or $\pi_1(X/\tau)$ is isomorphic to the fundamental group of a compact, connected non-orientable surface of genus $2$ or $3$ and $S$ is non-orientable. In the latter case, the veracity of the Borsuk-Ulam property depends further on the choice of involution $\tau$; we give a necessary and sufficient condition for it to hold in terms of the surjective homomorphism $\pi_1(X/\tau)\to \Z_{2}$ induced by the double covering $X\to X/\tau$. The cases $S=\St[2],\rp$ are treated separately.
\end{abstract}

\section{Introduction}\label{sec:intro}

St.~Ulam conjectured that \emph{if $\map{f}{\St}{\R^n}$ is a continuous map then there exists a point $p\in \St$ such that $f(p)=f(-p)$, where $-p$ is the antipodal point of $p$}~\cite[footnote, page 178]{Bo}. The conjecture was solved in~1933 by K.~Borsuk~\cite[S\"atz II]{Bo}. There was another result in Borsuk's paper, S\"atz~III, which is indeed equivalent to S\"atz~II (see~\cite[Section 2, Theorem 2.1.1]{Mat}).
It turned out that S\"atz~III had been proved three years before by L.~Lusternik and  L.~Schnirelmann~\cite{LS} (see also \cite[footnote, page 190]{Bo}). This was the beginning of the history of what we shall refer to as the \emph{Borsuk-Ulam property} or \emph{Borsuk-Ulam type theorem}. We say that \emph{the triple $(X, \tau, S)$ has the Borsuk-Ulam property 
if for every continuous map $\map{f}{X}{S}$, there is a point $x\in X$ such that $f(x)=f(\tau(x))$}.  In the past seventy years, the original statement has been greatly generalised in many directions, and has also been studied in other natural contexts. The contributions are numerous, and we do not intend to present here a detailed description of the development of the subject. One may consult~\cite{Mat} for some applications of the Borsuk-Ulam theorem.

In this Introduction, we concentrate on a particular direction that is more closely related to the type of Borsuk-Ulam problem relevant to the main theme of this paper. In~\cite{Go}, a Borsuk-Ulam type theorem for maps from compact surfaces without boundary with free involutions  into $\R^2$ was studied. 
An important feature which appears in these results of that paper is that the validity of the theorem depends upon the choice of involution. This phenomenon did not and could not show up in the case where the domain is the $2$-sphere $\St[2]$ since up to conjugation there is only one free involution on $\St[2]$. In a similar vein, the Borsuk-Ulam property was also analysed for triples for which the domain is a $3$-space form in~\cite{GNS}, and also for Seifert manifolds in~\cite{GHZ,BGHZ}. The study of these papers leads us to formulate a general problem which consists in finding the maximal value $n$ for which the Borsuk-Ulam property is true for triples $(X, \tau, \R^n)$, 
where $X$ is a given finite-dimensional $CW$-complex $X$ equipped with a free involution $\tau$. In this paper, we choose a direction closer to that of~\cite{Go} which is the investigation of maps from a space whose fundamental group is that of a surface, into a compact, connected surface $S$ without boundary. Within this framework, Proposition~\ref{gen} will enable us to formulate the veracity of the Borsuk-Ulam property in terms of a commutative diagram of the $2$-string braid group $B_{2}(S)$ of $S$. We shall then apply algebraic properties of $B_{2}(S)$ to help us to decide whether   the Borsuk-Ulam property holds in our setting in all cases.

Throughout this paper, $S$ will always denote a compact, connected surface without boundary, $S_g$ will be a compact, orientable surface of genus $g\geq 0$ without boundary, and $N_l$ will be a compact, non-orientable surface of genus $l\geq 1$ without boundary. We consider triples $(X, \tau, S)$, where $X$ is a $CW$-complex and $\tau$ is a cellular free involution.
The following statements summarise our main results. 

\begin{cor}\label{2dcw}\mbox{}
 \begin{enumerate}[$($a$)$]
\item\label{it:2dcw3} If $X$ is a $CW$-complex equipped with a cellular free involution $\tau$, the triple $(X, \tau, \St[2])$ satisfies the Borsuk-Ulam property if and only if the triple $(X, \tau, \R^3)$ satisfies the Borsuk-Ulam property. 
\item\label{it:2dcw3b} If $X$ is a $2$-dimensional $CW$-complex, the triple $(X, \tau, \St[2])$ does not satisfy the Borsuk-Ulam property for any cellular free involution $\tau$.
\item\label{it:2dcw3c} The triple $(\St[3], \tau, \St[2])$ satisfies the Borsuk-Ulam property for the unique cellular free involution (up to conjugacy) $\tau$ on $\St[3]$.
\item\label{it:2dcw3d} The triple $(\mathbb{R} P^3, \tau, \St[2])$ does not satisfy the Borsuk-Ulam property for the unique cellular free involution (up to conjugacy) $\tau$
on $\mathbb{R} P^3$.
\end{enumerate}
\end{cor}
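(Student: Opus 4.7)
The plan is to treat the four parts in turn, the first three being essentially formal and the last requiring an explicit construction.

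For part (a), I set up a standard double inclusion. In the ``if'' direction, I compose any map $X\to\St[2]$ with the inclusion $\St[2]\hookrightarrow\R^3$ and invoke the Borsuk--Ulam property for $\R^3$-valued maps. For the converse, given $f\colon X\to\R^3$ with no coincidence, the normalised difference $h(x)=(f(x)-f(\tau(x)))/\|f(x)-f(\tau(x))\|$ is a well-defined map $X\to\St[2]$ that satisfies $h(\tau(x))=-h(x)$, witnessing failure of the property for $\St[2]$-valued maps.

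For part (b), I first observe that the Borsuk--Ulam property fails for $(X,\tau,\St[2])$ precisely when there exists a $\Z_2$-equivariant map $X\to\St[2]$, with respect to $\tau$ on the source and the antipodal action on the target; one direction uses the construction from (a), the other is immediate since antipodal points are distinct. To produce such a map I proceed cell-by-cell on $X/\tau$: the obstruction to extending over an $n$-cell lies in $\pi_{n-1}(\St[2])$, and this group vanishes for $n=1,2$. Since $X/\tau$ is $2$-dimensional, no obstruction arises. Equivalently, the classifying map $X/\tau\to B\Z_2=\R P^\infty$ of the double cover $X\to X/\tau$ is, by cellular approximation, homotopic to a map into the $2$-skeleton $\rp$, and pulling back the double cover $\St[2]\to\rp$ produces the required equivariant map.

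Part (c) will be immediate from part (a) together with the classical Borsuk--Ulam theorem in dimension~$3$: the triple $(\St[3],-\id,\R^3)$ satisfies the property, hence so does $(\St[3],-\id,\St[2])$ by part~(a); as every free involution on $\St[3]$ is conjugate to the antipodal one, this settles (c).

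The main obstacle is part (d), for which I must exhibit an explicit map $h\colon \R P^3\to\St[2]$ with $h(\tau(x))\neq h(x)$ for every $x$. I will identify $\R P^3$ with the Lie group $SO(3)$; by the uniqueness of the free involution up to conjugacy, $\tau$ may be taken to be left multiplication by a fixed element $g_0\in SO(3)$ of order $2$, for concreteness $g_0=\operatorname{diag}(-1,-1,1)$. With $e_1=(1,0,0)^{T}$ one has $g_0 e_1=-e_1$, and I define $h\colon SO(3)\to \St[2]$ by $h(g)=g^{-1}e_1$; this is $\St[2]$-valued because the rows of an orthogonal matrix are unit vectors. A direct computation gives
\[
h(\tau(g))=(g_0 g)^{-1}e_1=g^{-1}g_0^{-1}e_1=g^{-1}g_0 e_1=-g^{-1}e_1=-h(g),
\]
so $h$ has no coincidence under $\tau$ and the Borsuk--Ulam property fails for $(\R P^3,\tau,\St[2])$. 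The key technical step is recognising the Lie-theoretic model of the involution; once that is in hand the construction and verification are elementary.
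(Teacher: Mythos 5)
Your argument is correct in all four parts, but it takes a genuinely different and more self-contained route than the paper, which proves (\ref{it:2dcw3}) by matching the characterisation of Proposition~2.2(iv) of~\cite{GHZ} for $(X,\tau,\R^3)$ with Proposition~\ref{equi} for $(X,\tau,\St[2])$, proves (\ref{it:2dcw3b}) by noting that $B_2(\St[2])\to\Z_2$ is an isomorphism so that the factorisation in diagram~(\ref{eq:basic}) always exists and then applying Proposition~\ref{gen}(\ref{it:propcase1}), and obtains (\ref{it:2dcw3c}) and (\ref{it:2dcw3d}) from the main result of~\cite{GNS} together with~\cite{My}. Your part (a) replaces all of this by the inclusion $\St[2]\hookrightarrow\R^3$ in one direction and the normalised difference $h(x)=\bigl(f(x)-f(\tau(x))\bigr)/\lVert f(x)-f(\tau(x))\rVert$ in the other; this is airtight, and it is worth noting that it furnishes exactly the sort of direct geometric proof of the converse implication that the Remark following the corollary declares unknown to the authors. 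Your part (b) — failure of the Borsuk--Ulam property is equivalent to the existence of a $\Z_2$-equivariant map $X\to\St[2]$, and such a map exists because the classifying map $X/\tau\to\R P^{\infty}$ compresses into the $2$-skeleton $\rp$ when $\dim (X/\tau)\leq 2$ — is in substance a direct proof of Proposition~\ref{equi} plus the observation that the compression is automatic in dimension two, so the obstruction-theoretic content matches the paper's but you bypass the braid group entirely. For (c) you use the classical Borsuk--Ulam theorem for $\St[3]$ together with your part (a), and for (d) the explicit map $h(g)=g^{-1}e_1$ on $SO(3)\cong \mathbb{R}P^3$ with $\tau$ left translation by $\operatorname{diag}(-1,-1,1)$ is correctly checked to satisfy $h(\tau(g))=-h(g)$, so the property fails; this avoids the external reference~\cite{GNS}, at the price of losing the connection with the paper's general framework. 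Two points you use silently and should state explicitly: the Borsuk--Ulam property is invariant under topological conjugation of the involution (needed in (c) and (d) to pass from your concrete models to an arbitrary cellular free involution, via the quoted uniqueness theorems — Livesay for $\St[3]$, \cite{My} for $\mathbb{R}P^3$ — which the statement of the corollary presupposes), and in (b) that $X/\tau$ inherits a $2$-dimensional CW structure, so that cellular approximation and the cell-by-cell extension argument apply to it.
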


If the target is the projective plane $\rp$ we have:
\begin{thm}\label{th:rp2case}
Let $X$ be a $CW$-complex equipped with a cellular free involution $\tau$ of dimension less than or equal to three, and suppose that $\pi_1(X)$ is isomorphic to the fundamental group of a compact surface without boundary. Then the Borsuk-Ulam property holds for the triple $(X, \tau, \rp)$ if and 
only if $X$ is simply connected. In particular, if $X$ is a compact surface without boundary, then the Borsuk-Ulam property holds for the triple $(X, \tau, \rp )$ 
if and only if $X$ is the $2$-sphere.
\end{thm}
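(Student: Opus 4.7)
The plan is to invoke Proposition~\ref{gen}, which reformulates the Borsuk-Ulam property for the triple $(X,\tau,\rp)$ as the non-existence of a homomorphism $\phi\colon \pi_1(X/\tau)\to B_{2}(\rp)$ whose composition with the natural surjection $B_{2}(\rp)\to\Z_{2}$ coincides with the classifying epimorphism $\theta\colon \pi_1(X/\tau)\to\Z_{2}$ of the double cover $X\to X/\tau$. The decisive input is the identification $B_{2}(\rp)\cong\quat[8]=\brak{\pm 1,\pm i,\pm j,\pm k}$, under which $P_{2}(\rp)=\ker(B_{2}(\rp)\to\Z_{2})$ is the cyclic subgroup $\ang{i}\cong\Z_{4}$. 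The problem is thereby reduced to a lifting question inside a finite group of order~$8$.

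For the ``if'' direction, assume $X$ is simply connected. Then $\pi_1(X/\tau)\cong\Z_{2}$ and $\theta=\id$, so a lift $\phi$ would send the non-trivial element of $\Z_{2}$ to an element of $\quat[8]\setminus\ang{i}$ of order dividing $2$. But the unique element of $\quat[8]$ of order $2$ is the central element $-1\in\ang{i}$, while every element of $\quat[8]\setminus\ang{i}$ has order $4$. No lift can exist, and the Borsuk-Ulam property is verified.

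For the converse, assuming $\pi_1(X)\neq 1$, I would construct a lift $\phi$ and thereby violate the Borsuk-Ulam property. Choose $g\in\pi_1(X/\tau)$ representing the non-trivial coset of $\pi_1(X)$ and set $\phi(g):=j\in\quat[8]$, forcing $\phi|_{\pi_1(X)}$ to take values in $\ang{i}\cong\Z_{4}$. The identity $jij^{-1}=-i$ then turns the homomorphism conditions into (a) $\phi|_{\pi_1(X)}\colon\pi_1(X)\to\Z_{4}$ is $\Z_{2}$-equivariant, where $\Z_{2}=\ang{g}/\ang{g^{2}}$ acts on $\pi_1(X)$ by conjugation and on $\Z_{4}$ by inversion, together with (b) $\phi(g^{2})=-1\in\Z_{4}$. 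The main obstacle is to exhibit such an equivariant homomorphism in every case. For torsion-free $\pi_1(X)$ (that is, for the fundamental group of any closed surface other than $\St[2]$ and $\rp$), one shows that the extension $1\to\pi_1(X)\to\pi_1(X/\tau)\to\Z_{2}\to 1$ must be non-split, so that $g^{2}\neq 1$, and then produces $\phi|_{\pi_1(X)}$ through an equivariant factorization of the abelianization into $\Z_{4}$; for $\pi_1(X)\cong\Z_{2}$, a direct inspection of the two possible extensions of $\Z_{2}$ by $\Z_{2}$ settles the matter.

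The remaining step is to realize the algebraic lift as a genuine continuous map $f\colon X\to\rp$ with $f(x)\neq f(\tau x)$ everywhere, or equivalently a map $X/\tau\to D_{2}(\rp)$ inducing $\phi$ on $\pi_1$, where $D_{2}(\rp)$ denotes the unordered two-point configuration space. Since the universal cover of $D_{2}(\rp)$ is homotopy equivalent to $F_{2}(\St[2])\simeq\St[2]$, the space $D_{2}(\rp)$ is not aspherical; the hypothesis $\dim X\leq 3$ is what keeps the resulting secondary obstruction (living in $H^{3}(X/\tau;\Z)$ with twisted coefficients) tractable. The ``in particular'' clause is then immediate: $\rp$ admits no free involution because $\chi(\rp)=1$, and among closed surfaces only $\St[2]$ has trivial fundamental group.
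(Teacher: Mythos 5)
The key algebraic input you rely on is wrong: $B_2(\rp)$ is not the quaternion group of order $8$ and $P_2(\rp)$ is not cyclic. By Van Buskirk's theorem (recalled in the paper immediately before this proof), $B_2(\rp)\cong\quat[16]$, the generalised quaternion group of order $16$, and $P_2(\rp)=\ker\pi$ is the non-abelian quaternion group $\quat$ of order $8$. Your ``if'' direction survives this, because the only fact you use --- that the unique element of order $2$ lies in the pure subgroup (it is the central full twist) --- also holds in $\quat[16]$; that is exactly the paper's argument. But the converse, where the substance lies, is built on the wrong group: the reduction to an inversion-equivariant homomorphism $\pi_1(X)\to\Z_4$ with $\phi(g^2)=-1$ presupposes that the pure part is $\Z_4$, and with the correct target this is not the lifting problem. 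The non-commutativity of $P_2(\rp)$ is genuinely needed: take $X=S_2$, the orientation double cover of $N_3$, so that $\pi_1(X/\tau)\cong\pi_1(N_3)=\setang{v,a_1,a_2}{v^2[a_1,a_2]}$ with $\theta_{\tau}(v)=\overline{1}$, $\theta_{\tau}(a_1)=\theta_{\tau}(a_2)=\overline{0}$. Any factorisation sends $a_1,a_2$ into $P_2(\rp)$; in your order-$8$ model these images commute, so the relator maps to $\phi(v)^2\neq 1$ and no lift exists, i.e.\ your scheme would wrongly conclude that the Borsuk-Ulam property holds for this non-simply-connected $X$. In $\quat[16]$ with presentation $x^4=y^2$, $yxy^{-1}=x^{-1}$, the paper's assignment $\phi(v)=xy$, $\phi(a_1)=x^2$, $\phi(a_2)=y$ does work, precisely because $[x^2,y]=x^4=(xy)^{-2}$. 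Moreover, you never actually construct the equivariant homomorphisms you promise (``one shows\ldots{} produces\ldots''); the entire content of the paper's converse is the explicit, case-by-case construction of lifts into $\quat[16]$ for the standard presentations of $\pi_1(S_g)$ and $\pi_1(N_l)$ according to the values of $\theta_{\tau}$ on the generators. (Your non-splitness claim also requires torsion-freeness of $\pi_1(X/\tau)$ --- i.e.\ that the quotient group is itself a surface group, as the paper's proof implicitly assumes --- not merely of $\pi_1(X)$.)

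The topological realisation step also rests on a false statement: the universal cover of $D_2(\rp)$ has the homotopy type of $\St[3]$, not of $F_2(\St[2])\simeq\St[2]$ (the paper cites~\cite{GG2}). This is not a harmless slip. If the fibre of the classifying map $D_2(\rp)\to K(B_2(\rp),1)$ were a $2$-sphere, the primary obstruction for a $3$-dimensional $X/\tau$ would live in degree $3$ and would not vanish for dimensional reasons, so ``the secondary obstruction is tractable'' is not an argument. The correct statement, which is Proposition~\ref{gen}(\ref{it:propcase3}) and is what the paper invokes at the outset, is that with fibre $\simeq\St[3]$ the first possible obstruction lies in degree $4$ and vanishes since $\dim (X/\tau)\leq 3$; the theorem is thereby equivalent to the purely algebraic factorisation question, which must then be settled in $\quat[16]$ as above.
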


These two results thus treat the cases where $S=\St[2]$ or $\rp$. From now on, assume that $S$ is different from $\St[2]$ and $\rp$, that $X$ is a finite-dimensional $CW$-complex, equipped with a cellular free involution $\tau$, and that $\pi_1(X/\tau)$  is either finite or is isomorphic to the fundamental group of a compact surface without boundary. The condition that $\pi_1(X/\tau)$ is finite is of course equivalent to saying that $\pi_1(X)$ is finite.

\begin{rem}
If the above space $X$ is a finite-dimensional $CW$-complex that is a $K(\pi, 1)$, the
hypothesis that $\pi_1(X/\tau)$ is isomorphic to the fundamental group of a compact surface without boundary is equivalent to saying 
that $\pi_1(X)$ is isomorphic to the fundamental group of a compact surface without boundary. To see this, observe that $X/\tau$ is also a $K(\pi, 1)$ and 
a finite-dimensional $CW$-complex. Therefore the group $\pi_1(X/\tau)$ is torsion free and is the middle group of the short exact sequence
$1 \to \pi_1(X) \to  \pi_1(X/\tau) \to \Z_2 \to 1$. 
Since $\pi_1(X)$ is a surface group and of finite index in $\pi_1(X/\tau)$, it follows that
that $\pi_1(X/\tau)$ is also a surface group. Indeed, from~\cite[Proposition 10.2, Section VIII]{Br}, $\pi_1(X/\tau)$  is a duality group, and  has the same duality module $\Z$ as $\pi_1(X)$. So $\pi_1(X/\tau)$  is a Poincaré duality group over $\Z$.  But every $PD^2$ group over $\Z$ is the fundamental group of a surface as result of~\cite{EL,EM}.
\end{rem}

In the case that $\pi_1(X)$ is finite, we obtain the following result.
\begin{prop}\label{fin}     
Let $X$ be a  $CW$-complex equipped with a cellular free involution $\tau$, and let $S$ be a compact, connected surface without boundary and different from $\rp$ and $\St[2]$. If $\pi_1(X)$ is finite then the Borsuk-Ulam property holds for the triple $(X,\tau, S)$
\end{prop}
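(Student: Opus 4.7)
The plan is to apply the braid-group criterion (Proposition~\ref{gen}) in a very direct way and exploit the torsion-freeness of $B_{2}(S)$ for surfaces $S$ distinct from $\St[2]$ and $\rp$.

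First I would recall the criterion: the triple $(X,\tau,S)$ fails to satisfy the Borsuk-Ulam property if and only if there exists a homomorphism $\varphi\colon\pi_{1}(X/\tau)\to B_{2}(S)$ making the diagram
\[
\begin{array}{ccc}
\pi_{1}(X/\tau) & \xrightarrow{\;\varphi\;} & B_{2}(S)\\[2pt]
\downarrow & & \downarrow\\[2pt]
\Z_{2} & = & \Z_{2}
\end{array}
\]
commute, where the left-hand vertical map is the surjection induced by the double cover $X\to X/\tau$ and the right-hand vertical map is the canonical permutation homomorphism $B_{2}(S)\to\Z_{2}$. Since $\pi_{1}(X)$ is finite and $\pi_{1}(X)$ sits as an index-two subgroup of $\pi_{1}(X/\tau)$, the group $\pi_{1}(X/\tau)$ is also finite, and the surjection $\pi_{1}(X/\tau)\to\Z_{2}$ is non-trivial (the cover $X\to X/\tau$ being connected).

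The key ingredient I would then invoke is that for every compact, connected surface $S$ without boundary, $S\ne\St[2],\rp$, the surface braid group $B_{2}(S)$ is torsion-free. This follows from the Fadell-Neuwirth fibration together with the well-known fact that $\pi_{1}(S)$ and the fundamental group of the configuration space $F_{2}(S)$ are torsion-free in these cases.

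Consequently, any homomorphism from the finite group $\pi_{1}(X/\tau)$ to the torsion-free group $B_{2}(S)$ must be trivial. But a trivial $\varphi$ cannot fit into the commutative square above, because the composition $\pi_{1}(X/\tau)\to B_{2}(S)\to\Z_{2}$ would then be trivial, contradicting surjectivity of $\pi_{1}(X/\tau)\to\Z_{2}$. Hence no such $\varphi$ exists, and by the criterion the Borsuk-Ulam property holds for $(X,\tau,S)$.

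The argument is essentially immediate once the braid-group criterion is granted; the only non-trivial external input is the torsion-freeness of $B_{2}(S)$, which is the step where one must be careful to exclude $\St[2]$ and $\rp$, since precisely those two surfaces yield braid groups containing torsion. No dimension, homotopy-type or $K(\pi,1)$ hypothesis on $X$ is required, so the statement holds for arbitrary $CW$-complexes $X$ with finite fundamental group.
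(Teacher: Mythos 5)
Your argument is correct and is essentially the paper's own proof: finiteness of $\pi_{1}(X)$ gives finiteness of $\pi_{1}(X/\tau)$, torsion-freeness of $B_{2}(S)$ forces any homomorphism $\varphi$ to be trivial, and triviality is incompatible with the surjectivity of $\theta_{\tau}$, so no factorisation as in Proposition~\ref{gen} exists and the Borsuk-Ulam property holds. (Only a cosmetic caveat: torsion-freeness of $B_{2}(S)$ does not follow merely from torsion-freeness of $\pi_{1}(F_{2}(S))$, since an index-two overgroup of a torsion-free group can have $2$-torsion; one should instead invoke that $D_{2}(S)$ is a finite-dimensional aspherical space, so $B_{2}(S)$ has finite cohomological dimension.)
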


Now suppose that $\pi_1(X/\tau)$ is isomorphic to the fundamental group of a compact surface without boundary. There are four basic cases according to whether $S$ is orientable or non-orientable, and to whether $\pi_{1}(X/\tau)$ is isomorphic to the fundamental group of an orientable or a non-orientable surface without boundary. In Section~\ref{sec:rootspns}, we first consider the case where $S$ is non-orientable. The following theorem pertains to the first subcase where $\pi_{1}(X/\tau)$ is isomorphic to the fundamental group of an orientable surface without boundary.

\begin{thm}\label{orien} 
Let $X$ be a  finite-dimensional $CW$-complex equipped with a  cellular free involution $\tau$, and let $S$ be a compact, connected non-orientable surface without boundary and different from $\rp$. If $\pi_1(X/\tau)$ is isomorphic to the fundamental group of a compact, connected orientable surface without boundary then the Borsuk-Ulam property does not hold for the triple $(X,\tau, S)$. 
\end{thm}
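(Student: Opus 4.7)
The plan is to invoke Proposition~\ref{gen} to replace the Borsuk-Ulam condition with an algebraic lifting problem: the triple $(X,\tau,S)$ fails the Borsuk-Ulam property exactly when the surjection $\theta\colon \pi_{1}(X/\tau) \to \Z_{2}$ induced by the double covering $X \to X/\tau$ admits a lift through the permutation epimorphism $B_{2}(S) \to \Z_{2}$, that is, when there is a homomorphism $\varphi\colon \pi_{1}(X/\tau) \to B_{2}(S)$ whose composition with $B_{2}(S) \to \Z_{2}$ equals $\theta$. My task is therefore to produce such a $\varphi$.

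Write $\pi_{1}(X/\tau) \cong \pi_{1}(S_{g})$. The degenerate case $g = 0$ is immediate (then $X$ is a disconnected double cover of $S^{2}$ and the lift is obtained via two constant maps to distinct points of $S$), so I assume $g \geq 1$ and fix the standard presentation
\[
\pi_{1}(S_{g}) = \ang{a_{1},b_{1},\ldots,a_{g},b_{g} \,\mid\, [a_{1},b_{1}]\cdots[a_{g},b_{g}]}.
\]
Since $S = N_{l}$ with $l \geq 2$, the permutation homomorphism $B_{2}(N_{l}) \to \Z_{2}$ is surjective, so I select a single element $\sigma \in B_{2}(N_{l})$ whose image in $\Z_{2}$ is non-trivial, and then define $\varphi$ on generators by $\varphi(x) = \sigma$ whenever $\theta(x) = 1$ and $\varphi(x) = 1$ otherwise.

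The key verification is that this prescription preserves the defining relator, and this is where orientability does all the work. The assignment takes values in the cyclic (hence abelian) subgroup $\ang{\sigma} \leq B_{2}(N_{l})$, while the relator $\prod_{i}[a_{i},b_{i}]$ lies in the commutator subgroup of the free group on $a_{1},b_{1},\ldots,a_{g},b_{g}$, so it is sent to the identity by $\varphi$. Thus $\varphi$ extends to a well-defined homomorphism, and by construction its composition with $B_{2}(N_{l}) \to \Z_{2}$ agrees with $\theta$ on generators and so equals $\theta$ globally. Proposition~\ref{gen} then delivers the failure of the Borsuk-Ulam property.

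I do not anticipate any substantive obstacle: the proof is essentially effortless precisely because the orientable surface relator is a product of commutators, which can be trivialised simply by sending the generators into an abelian subgroup of $B_{2}(N_{l})$. The analogous construction breaks down when $\pi_{1}(X/\tau)$ is non-orientable, since the relator $a_{1}^{2}\cdots a_{k}^{2}$ then forces the constraint $\sigma^{2k}=1$ on any lift, restricting attention to torsion elements of $B_{2}(N_{l})$; it is exactly this extra condition which produces the subtler genus restrictions $l = 2, 3$ that appear elsewhere in the paper's classification.
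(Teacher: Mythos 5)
Your proposal is correct and is essentially the paper's own argument: the paper likewise lifts $\theta_{\tau}$ by sending every generator into the cyclic subgroup generated by a fixed $x\in B_{2}(S)\setminus P_{2}(S)$ (using $x^{2}$ instead of the identity when $\theta_{\tau}(a_i)=\overline{0}$), and the relator dies because it is a product of commutators landing in an abelian subgroup, after which Proposition~\ref{gen}(\ref{it:propcase2}) applies. Your degenerate case $g=0$ does not actually occur in this framework, since $\theta_{\tau}$ is surjective and hence $\pi_{1}(X/\tau)$ is non-trivial.
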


For the second subcase where $\pi_{1}(X/\tau)$ is isomorphic to the fundamental group of a non-orientable surface without boundary, we have:

\begin{thm}\label{th:rootspns}
Let $X$ be a finite-dimensional $CW$-complex equipped with a  cellular free involution $\tau$, and let $S$ be a compact, connected non-orientable surface without boundary different from $\rp$. Suppose that $\pi_1(X/\tau)$ is isomorphic to the fundamental group of a compact, connected non-orientable surface without boundary. Then the Borsuk-Ulam property holds for the triple $(X,\tau,S)$ if and only if $\pi_1(X)=\brak{1}$.
\end{thm}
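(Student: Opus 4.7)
The proof rests on the algebraic criterion supplied by Proposition~\ref{gen}: the triple $(X,\tau,S)$ has the Borsuk--Ulam property if and only if there is no homomorphism $\phi:\pi_1(X/\tau)\to B_2(S)$ whose composition with the canonical permutation projection $\pi\colon B_2(S)\to\Z_2$ coincides with the surjection $\theta:\pi_1(X/\tau)\to\Z_2$ induced by the double cover $X\to X/\tau$. Both implications of the theorem are therefore reduced to deciding whether such a lift $\phi$ exists.

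For the direction $(\Leftarrow)$, assume $\pi_1(X)=\brak{1}$. The short exact sequence $1\to\pi_1(X)\to\pi_1(X/\tau)\to\Z_2\to 1$ collapses to $\pi_1(X/\tau)\cong\Z_2$, so that $\theta$ becomes the identity. A lift $\phi$ would have to send the generator of $\Z_2$ to an element of order $2$ in $B_2(S)\setminus P_2(S)$. Under the hypothesis $S=N_l$ with $l\geq 2$, known structural results on $B_2(N_l)$ rule out such an element: its torsion, if any, lies entirely in $P_2(N_l)$. Hence no lift can exist and the Borsuk--Ulam property holds; alternatively, since $\pi_1(X)=\brak{1}$ is in particular finite, this direction is also an immediate consequence of Proposition~\ref{fin}.

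For the direction $(\Rightarrow)$, we argue by contrapositive. If $\pi_1(X)\neq\brak{1}$ then $\pi_1(X/\tau)\cong\pi_1(N_k)$ for some $k\geq 2$, and we construct an explicit lift $\phi:\pi_1(N_k)\to B_2(N_l)$, which by the criterion produces a map $f:X\to S$ with $f(x)\neq f(\tau(x))$ for every $x\in X$. Using the standard presentation $\pi_1(N_k)=\setang{a_1,\ldots,a_k}{a_1^2 a_2^2\cdots a_k^2}$, we choose elements $\phi(a_i)\in B_2(N_l)$ satisfying $\pi(\phi(a_i))=\theta(a_i)$ and then verify the single defining relation $\phi(a_1)^2\phi(a_2)^2\cdots\phi(a_k)^2=1$ in $B_2(N_l)$. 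Because each $\phi(a_i)^2$ lies automatically in the pure braid group $P_2(N_l)$, this reduces to an identity in $P_2(N_l)$, which we establish by exploiting a product-of-squares relation in $B_2(N_l)$ inherited from the defining relation of $\pi_1(N_l)$ itself (essentially, the fundamental relation of the target surface can be pulled back to a relation among squares of non-pure braids).

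The main obstacle is producing this lift $\phi$: the correct choice of the elements $\phi(a_i)$ depends on $k$, on $l$ and, crucially, on the homomorphism $\theta$, and verifying that the product of squares collapses to the identity demands a delicate calculation inside $B_2(N_l)$ using its explicit presentation in terms of generators of $\pi_1(N_l)$ together with the string-exchange element. Once the lift has been constructed in every admissible case, Proposition~\ref{gen} completes the argument.
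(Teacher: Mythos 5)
Your reduction to the algebraic criterion of Proposition~\ref{gen} and your treatment of the direction where $\pi_1(X)=\brak{1}$ are correct and agree with the paper: with $\pi_1(X/\tau)\cong\Z_2$, a factorisation would force an element of order two in $B_2(S)\setminus P_2(S)$, which torsion-freeness of $B_2(S)$ (or, directly, Proposition~\ref{fin}) excludes.

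The other direction, however, contains a genuine gap: the homomorphism $\phi$ is never actually produced. You state that one can choose elements $\phi(a_i)\in B_2(N_l)$ with $\pi(\phi(a_i))=\theta_{\tau}(a_i)$ and that the relation $\phi(a_1)^2\cdots\phi(a_k)^2=1$ can be verified ``by exploiting a product-of-squares relation in $B_2(N_l)$ inherited from the defining relation of $\pi_1(N_l)$'', but no such relation is exhibited, and the naive candidate fails: in the standard presentations of $B_2(N_l)$ the relator of $\pi_1(N_l)$ lifts to a product of squares of braid generators equal to the \emph{full twist} $B=\sigma^2$, a nontrivial element of $P_2(N_l)$, not to the identity. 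Moreover the choice of the $\phi(a_i)$ is constrained generator by generator by the prescribed values of $\theta_{\tau}$, so the verification depends on the equivalence class of $\theta_{\tau}$ and cannot be dispatched uniformly; this is precisely where the content of the theorem lies, and your proposal defers it as a ``delicate calculation'' without carrying it out. The paper does this work explicitly: it first reduces the possible $\theta_{\tau}$ to a short list via Proposition~\ref{prop:appendor}, works with the presentations~(\ref{eq:presnorleven}) and~(\ref{eq:presnonorodd}) rather than the all-squares presentation, and then uses concrete relations in $B_2(N_m)$ from Scott's presentation (for instance $\rho_{2,1}\rho_{1,1}\rho_{2,1}^{-1}=\rho_{1,1}B^{-1}$, $\sigma\rho_{1,1}\sigma^{-1}=\rho_{2,1}$, whence $\rho_{2,1}\rho_{1,1}\sigma\rho_{1,1}^{-1}\rho_{2,1}^{-1}=\sigma^{-1}$) to define $\phi$ case by case (e.g.\ $\phi(\alpha)=\sigma$, $\phi(\beta)=\rho_{2,1}\rho_{1,1}$, or $\phi(v)=\sigma$, $\phi(a_1)=\rho_{1,1}^{-1}$, $\phi(a_2)=\rho_{2,1}$, with the remaining generators sent as in~(\ref{eq:defaigen})) and checks the single relation in each case. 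Until you supply such explicit choices and verifications (or some other argument guaranteeing the existence of the factorisation for every admissible $\theta_{\tau}$), the ``only if'' half of the theorem is not proved.
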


In Section~\ref{sec:ornotS2}, we study the second case, where $S$ is orientable. If $\pi_{1}(X/\tau)$ is isomorphic to the fundamental group of an orientable surface without boundary, we have:

\begin{thm}\label{orien-orien}
Let $X$ be a finite-dimensional $CW$-complex equipped with a  cellular free involution $\tau$, and let $g>0$. If $S=S_g$, and if $\pi_1(X/\tau)$ is isomorphic to the fundamental group of a compact, connected orientable surface without boundary then the Borsuk-Ulam property does not hold for the triple $(X,\tau, S)$.
\end{thm}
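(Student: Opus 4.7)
The plan is to invoke the algebraic criterion provided by Proposition~\ref{gen}: the triple $(X,\tau,S_g)$ fails the Borsuk-Ulam property if and only if one can produce a homomorphism $\tilde\varphi\colon \pi_1(X/\tau) \to B_2(S_g)$ lifting, through the permutation projection $\pi_B\colon B_2(S_g) \to \Z_2$, the classifying homomorphism $\rho\colon \pi_1(X/\tau) \to \Z_2$ of the double cover $X\to X/\tau$. Thus the entire task reduces to the construction of such a lift.

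By hypothesis $\pi_1(X/\tau) \cong \pi_1(S_h)$ for some $h\geq 0$. One may assume $\rho$ is surjective, since otherwise $X$ decomposes as two copies of $X/\tau$ swapped by $\tau$ and the Borsuk-Ulam property is trivially violated by sending the two components to distinct points of $S_g$; in particular $h\geq 1$, and one fixes the standard presentation
$$\pi_1(S_h) = \setang{a_1,b_1,\ldots,a_h,b_h}{\textstyle\prod_{i=1}^h [a_i,b_i]}.$$
Embed a closed $2$-disk $D\subset S_g$; the inclusion induces a homomorphism $B_2(D)\to B_2(S_g)$. Since $B_2(D)\cong \Z$ is generated by the half-twist $\sigma_1$, whose image under the permutation map to $\Z_2$ is nontrivial, define $\tilde\varphi$ on generators by $\tilde\varphi(x) = \sigma_1^{\rho(x)}$ for each $x \in \{a_1,b_1,\ldots,a_h,b_h\}$, with $\rho(x)\in\{0,1\}$ read as an exponent. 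All generator images lie in the cyclic, hence abelian, subgroup generated by (the image of) $\sigma_1$; consequently every commutator $[a_i,b_i]$ is sent to the identity, the single defining relation is respected automatically, and $\tilde\varphi$ is well defined. By construction $\pi_B\circ\tilde\varphi = \rho$, so by Proposition~\ref{gen} the Borsuk-Ulam property fails.

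The main step to verify carefully is the passage from the algebraic lift to an actual continuous map $f\colon X \to S_g$ with $f(x) \neq f(\tau(x))$ everywhere, which is built into Proposition~\ref{gen} but proceeds morally as follows. Since $g\geq 1$, the ordered configuration space $F_2(S_g)$ is aspherical by Fadell-Neuwirth, so $D_2(S_g) = F_2(S_g)/\Z_2$ is a $K(B_2(S_g),1)$ and obstruction theory realizes $\tilde\varphi$ as a map $\bar f\colon X/\tau \to D_2(S_g)$. The pullback along $\bar f$ of the double cover $F_2(S_g)\to D_2(S_g)$ is classified by $\pi_B\circ\tilde\varphi = \rho$, hence coincides with $X\to X/\tau$, and composing the induced equivariant map $X\to F_2(S_g)$ with the first projection to $S_g$ yields the required $f$. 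No case analysis on the parity of the genus $h$ of $X/\tau$ or on the particular involution is needed, because the trick of parking every generator in a single cyclic subgroup of $B_2(S_g)$ kills all commutators uniformly; this is precisely what will break down in the non-orientable setting, where one typically cannot find such an abelian subgroup meeting every coset of $P_2(S)$.
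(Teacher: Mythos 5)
Your proposal is correct and follows essentially the same route as the paper: the paper's proof (by reduction to the proof of Theorem~\ref{orien}) likewise picks a single element of $B_2(S_g)\setminus P_2(S_g)$ and sends every generator of the orientable surface group to a power of it (using $x^2$ rather than the identity when $\theta_{\tau}$ vanishes, an immaterial difference), so that the product-of-commutators relation is killed and Proposition~\ref{gen}(b) applies. The only cosmetic difference is your explicit choice of the disk half-twist $\sigma_1$ and your spelled-out recap of the obstruction-theoretic converse already contained in Proposition~\ref{gen}.
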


The remainder of Section~\ref{sec:ornotS2} is devoted to the study of the subcase where $\pi_{1}(X/\tau)$ is isomorphic to the fundamental group of the non-orientable surface $N_{l}$ without boundary and  $S=S_g$, where $g\geq 1$. Our analysis divides into four subcases:
\begin{enumerate}[(1)]
 \item\label{it:casea} $l=1$.
\item\label{it:caseb} $l \geq 4$.
\item\label{it:cased} $l=2$.
\item\label{it:casee} $l=3$.
\end{enumerate}

For subcase~(\ref{it:casea}) we have:

\begin{prop}\label{case1} 
Let $X$ be a finite-dimensional $CW$-complex equipped with a cellular free involution $\tau$, and let $g\geq 1$. If $\pi_1(X/\tau)$ is isomorphic to the fundamental group of  the projective plane $\rp$, then the Borsuk-Ulam property holds for $(X,\tau,S_{g})$. 
\end{prop}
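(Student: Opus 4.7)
The plan is to apply the braid group criterion of Proposition~\ref{gen}: the Borsuk-Ulam property for $(X,\tau,S)$ fails if and only if the homomorphism $\theta\colon\pi_{1}(X/\tau)\to\Z_{2}$ induced by the double covering $X\to X/\tau$ lifts through the standard surjection $\sigma\colon B_{2}(S)\to\Z_{2}$, i.e.\ there exists a homomorphism $\varphi\colon\pi_{1}(X/\tau)\to B_{2}(S_{g})$ with $\sigma\circ\varphi=\theta$. So our job reduces to a purely algebraic question about $B_{2}(S_{g})$.

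Under the hypothesis, $\pi_{1}(X/\tau)\cong\pi_{1}(\rp)\cong\Z_{2}$, and $\theta$ is necessarily surjective (the covering $X\to X/\tau$ is non-trivial since $\tau$ is free), hence $\theta$ is the identity of $\Z_{2}$. A lift $\varphi\colon\Z_{2}\to B_{2}(S_{g})$ with $\sigma\circ\varphi=\id$ amounts to producing an element of order~$2$ in $B_{2}(S_{g})$ whose image in $\Z_{2}$ is non-trivial.

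The main step is then to invoke the classical fact that for $g\geq 1$ the braid group $B_{2}(S_{g})$ of an orientable surface of positive genus is torsion free (see e.g.\ the results of Fadell--Van~Buskirk / Gon\c{c}alves--Guaschi on surface braid groups). Consequently $B_{2}(S_{g})$ contains no non-trivial element of finite order, and in particular no element of order~$2$, so no such lift $\varphi$ can exist. By the criterion above, the Borsuk-Ulam property holds for $(X,\tau,S_{g})$.

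The only delicate point is verifying that the braid group criterion of Proposition~\ref{gen} genuinely applies here, since $X$ is merely a finite-dimensional $CW$-complex rather than a surface; but this is precisely the generality in which Proposition~\ref{gen} is stated, and the conclusion requires no further information about $X$ beyond the computation of $\pi_{1}(X/\tau)$. All other steps are immediate, and the torsion-freeness of $B_{2}(S_{g})$ is the decisive ingredient.
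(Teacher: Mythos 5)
Your proof is correct and follows essentially the same route as the paper: since $\pi_{1}(X/\tau)\cong\Z_{2}$ and $B_{2}(S_{g})$ is torsion free for $g\geq 1$, the only homomorphism $\pi_{1}(X/\tau)\to B_{2}(S_{g})$ is trivial, so no factorisation of diagram~(\ref{eq:basic}) exists and Proposition~\ref{gen} gives the Borsuk--Ulam property. Note that only the first (``only if'') part of Proposition~\ref{gen} is needed here, which is exactly the generality (arbitrary $CW$-complex with cellular free involution) in which it is stated, so your ``delicate point'' is indeed harmless.
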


For subcase~(\ref{it:caseb}) we have:
\begin{prop}\label{case2}  
Let $X$ be a finite-dimensional $CW$-complex equipped with a cellular free involution $\tau$, let $l\geq 4$, and let $g\geq 1$. If $\pi_1(X/\tau)$ is isomorphic to the fundamental group of the non-orientable surface $N_{l}$ then the Borsuk-Ulam property does not hold for $(X,\tau, S_{g})$.
\end{prop}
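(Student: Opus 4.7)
By Proposition~\ref{gen}, the Borsuk-Ulam property fails for $(X,\tau,S_{g})$ if and only if there is a homomorphism $\phi\colon\pi_{1}(X/\tau)\to B_{2}(S_{g})$ whose composition with the canonical projection $\pi\colon B_{2}(S_{g})\twoheadrightarrow\Z_{2}$ equals the homomorphism $\theta\colon\pi_{1}(X/\tau)\twoheadrightarrow\Z_{2}$ induced by the double cover $X\to X/\tau$. Working with the standard presentation $\pi_{1}(N_{l})=\langle a_{1},\ldots,a_{l}\mid a_{1}^{2}\cdots a_{l}^{2}\rangle$, write $\epsilon_{i}=\theta(a_{i})\in\Z_{2}$ and $k=|\{i:\epsilon_{i}=1\}|\geq 1$. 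The plan is to produce such a $\phi$ by specifying elements $x_{i}=\phi(a_{i})\in B_{2}(S_{g})$ satisfying $\pi(x_{i})=\epsilon_{i}$ and $\prod_{i=1}^{l}x_{i}^{2}=1$. Fix a lift $\sigma\in B_{2}(S_{g})$ of $1\in\Z_{2}$; by a self-homeomorphism of $N_{l}$ permuting the cross-caps we may assume that $\epsilon_{1}=\cdots=\epsilon_{k}=1$ and $\epsilon_{k+1}=\cdots=\epsilon_{l}=0$.

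When $k$ is even the construction is immediate: set $\phi(a_{2j-1})=\sigma$ and $\phi(a_{2j})=\sigma^{-1}$ for $1\leq j\leq k/2$, and $\phi(a_{i})=1$ for $i>k$; the paired contributions $\sigma^{2}\sigma^{-2}$ collapse and $\prod\phi(a_{i})^{2}=1$, as required. When $k$ is odd, the assumption $l\geq 4$ is what makes the construction possible. If $l>k$, one pairs off $k-1$ of the parity-$1$ generators as before, assigns the unpaired one into $\sigma P_{2}(S_{g})$, and absorbs the resulting residual $\sigma^{\pm 2}$ using one (or more) of the $l-k$ spare generators of parity~$0$, mapped suitably into $P_{2}(S_{g})$. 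If instead $k=l$ is odd (and hence $l\geq 5$), no parity-$0$ generator is available, and all $l$ values must lie in $\sigma P_{2}(S_{g})$; writing $x_{i}=\sigma q_{i}$ with $q_{i}\in P_{2}(S_{g})$, the required relation becomes an equation in $P_{2}(S_{g})$ involving the conjugation action of $\sigma$ (non-trivial for $g\geq 1$, by the Fadell--Neuwirth exact sequence $1\to\pi_{1}(S_{g}\setminus\{\ast\})\to P_{2}(S_{g})\to\pi_{1}(S_{g})\to 1$).

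The main obstacle is precisely this last sub-case, where one must verify by a direct manipulation inside $B_{2}(S_{g})$ that a suitable choice of $q_{1},\ldots,q_{l}\in P_{2}(S_{g})$ makes $\prod_{i=1}^{l}(\sigma q_{i})^{2}$ trivial. The $l\geq 5$ degrees of freedom together with the rich non-abelian structure of $P_{2}(S_{g})$ should guarantee the existence of such a solution; once the $x_{i}$ are chosen, $\phi$ is defined on the generators and respects the surface relation $a_{1}^{2}\cdots a_{l}^{2}=1$ by construction, yielding the required lift of $\theta$ and hence the failure of the Borsuk-Ulam property for $(X,\tau,S_{g})$.
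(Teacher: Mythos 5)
Your reduction via Proposition~\ref{gen}, and your construction when $k$ is even, are fine. The real content, however, sits in the case $k$ odd, and there your scheme breaks down. After pairing $k-1$ of the odd-parity generators as $(y,y^{-1})$ their contribution cancels exactly, so what remains is the equation $y^2p_{k+1}^2\cdots p_l^2=1$ with $\pi(y)=\overline{1}$ and $p_{k+1},\ldots,p_l\in P_2(S_g)$; this is precisely the lifting problem for $N_{l-k+1}$ with exactly one odd-parity cross-cap generator. For $l-k=1$ it has \emph{no} solution: if $y^2p^2=1$ with $\pi(y)=\overline{1}$ and $p\in P_2(S_g)$, then $\alpha\mapsto yp$, $\beta\mapsto p^{-1}$ gives a factorisation for the Klein bottle with $\theta_{\tau}(\alpha)=\overline{1}$, contradicting Proposition~\ref{case4}. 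For $l-k=2$ it also has no solution: a relation $y^2p_1^2p_2^2=1$ of this type would give a lift of the surjection $\pi_1(N_3)\to\Z_2$ taking the value $\overline{1}$ on exactly one cross-cap generator; that surjection sends the torsion class of $H_1(N_3)$ to $\overline{1}$ and is not the orientation character, so it lies in the equivalence class singled out in Theorem~\ref{case5}, and Proposition~\ref{prop:solequat} shows no lift exists. Hence for patterns such as $(\overline{1},\overline{1},\overline{1},\overline{0})$ on $N_4$ or $(\overline{1},\overline{1},\overline{1},\overline{0},\overline{0})$ on $N_5$ the residual full twist cannot be ``absorbed'' by squares of pure braids, however the spare generators are chosen: $B^{-1}$ is neither a square nor a product of two squares of elements of $P_2(S_g)$, and your proposed route provably cannot be completed in these subcases.

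In the remaining subcases ($l-k\geq 3$, and $k=l$ odd) you give no construction at all; the appeal to ``degrees of freedom and the rich non-abelian structure of $P_2(S_g)$'' is not an argument, and the same heuristic would wrongly predict solvability for $N_2$ and $N_3$ in the classes where the Borsuk--Ulam property does hold. The missing ingredient is an identity that kills a full twist using only pure braids: by the first relation of~(V) of Theorem~\ref{th:fadhu}, $\rho_{2,2}\rho_{1,1}\rho_{2,2}^{-1}=\rho_{1,1}B^{-1}$, i.e.\ $[\rho_{1,1}^{-1},\rho_{2,2}]=B^{-1}$, so the full twist is a \emph{commutator} (not a product of squares) of elements of $P_2(S_g)$. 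This is why the paper works with the presentations $v^2[a_1,a_2]\cdots$ and $\alpha\beta\alpha\beta^{-1}[a_1,a_2]\cdots$ and invokes Proposition~\ref{prop:appendor}: the hypothesis $l\geq 4$ guarantees, after normalising $\theta_{\tau}$, a commutator pair $a_{2i-1},a_{2i}$ with both values $\overline{0}$, and then $\phi(v)=\sigma$ (or $\phi(\alpha)=\sigma$), $\phi(a_{2i-1})=\rho_{1,1}^{-1}$, $\phi(a_{2i})=\rho_{2,2}$, with the other generators sent to $e$ or $\sigma$ according to parity, kills the relator. If you wish to keep the all-squares presentation you must either transport this solution through an explicit isomorphism or prove an analogous few-squares identity; as written, the proof has a genuine gap.
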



To describe the results in the remaining two subcases, we first need to introduce some notation and terminology. Let $(X, \tau, S)$ be a triple, where $\tau$ is a cellular free involution on $X$ and $S$ is a compact surface without boundary, and let
\begin{equation*}
\map{\theta_{\tau}}{\pi_1(X/\tau)}{\Z_2}
\end{equation*}
be the surjective homomorphism defined by the double covering $X \to X/\tau$. For subcases~(\ref{it:cased})  and~(\ref{it:casee}), the veracity of the Borsuk-Ulam property depends on the choice of the free involution $\tau$. As we shall see in Proposition~\ref{gen}, the relevant information concerning $\tau$ is encoded in $\theta_{\tau}$. The study of the possible $\theta_{\tau}$ may be simplified by considering the following equivalence relation (see also the end of Section~\ref{sec:GeBo}). Let $G$ be a group, and consider the set of   
elements of $\operatorname{Hom}(G, \Z_2)$ that are surjective homomorphisms (or equivalently the 
elements that are not the null homomorphism). Two surjective homomorphisms $\phi_1,\phi_2\in  \operatorname{Hom}(G,\Z_2)$ are said to be \emph{equivalent} if there is an isomorphism $\map{\varphi}{G}{G}$ such that $\phi_1\circ \varphi=\phi_2$. Taking $G=\pi_{1}(X/\tau)$, and using the results given in the Appendix, we shall see that many algebraic questions will depend only on the equivalence classes of this relation. This will help to reduce the number of cases to be analysed.

For subcase~(\ref{it:cased}), where $l=2$, we have: 

\begin{prop}\label{case4} 
Let $X$ be a finite-dimensional $CW$-complex equipped with a cellular free involution $\tau$, and let $g\geq 1$. Consider the presentation 
$\setang{\alpha, \beta}{\alpha\beta\alpha\beta^{-1}}$ of the fundamental group of the Klein bottle $K$. If  $\pi_1(X/\tau)$ is isomorphic to $\pi_{1}(K)$ then the 
Borsuk-Ulam property holds for the triple $(X,\tau,S_{g})$ if and only if  $\theta_{\tau}(\alpha)=\overline{1}$. 
\end{prop}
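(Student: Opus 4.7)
My strategy is to apply \repr{gen} to translate the Borsuk--Ulam question into a lifting problem in the surface braid group $B_{2}(S_{g})$. Writing $\map{\sigma}{B_{2}(S_{g})}{\Z_{2}}$ for the natural permutation homomorphism, \repr{gen} asserts that Borsuk--Ulam \emph{fails} for $(X,\tau,S_{g})$ if and only if there exists a homomorphism $\map{\varphi}{\pi_{1}(X/\tau)}{B_{2}(S_{g})}$ with $\sigma\circ\varphi=\theta_{\tau}$. Using the presentation of $\pi_{1}(X/\tau)\cong \pi_{1}(K)$ given in the statement, such a $\varphi$ amounts to a pair $(x,y)=(\varphi(\alpha),\varphi(\beta))$ in $B_{2}(S_{g})$ satisfying
\begin{equation*}
\sigma(x)=\theta_{\tau}(\alpha),\qquad \sigma(y)=\theta_{\tau}(\beta),\qquad xyxy^{-1}=1,
\end{equation*}
the last relation being equivalent to $y^{-1}xy=x^{-1}$; in words, $y$ must conjugate $x$ to its own inverse.

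For the direction $\theta_{\tau}(\alpha)=\overline{0}\Rightarrow$ BU fails, I would construct the lift explicitly. Surjectivity of $\theta_{\tau}$ forces $\theta_{\tau}(\beta)=\overline{1}$; then setting $x=1$ together with any $y\in B_{2}(S_{g})$ with $\sigma(y)=\overline{1}$ (which exists because $\sigma$ is onto) makes the relation reduce to $yy^{-1}=1$, so $\varphi$ is well-defined, and Borsuk--Ulam fails.

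For the converse $\theta_{\tau}(\alpha)=\overline{1}\Rightarrow$ BU holds, I would argue by contradiction, assuming $x,y\in B_{2}(S_{g})$ with $\sigma(x)=\overline{1}$ and $y^{-1}xy=x^{-1}$. Abelianising yields $2\rho(x)=0$ in $B_{2}(S_{g})^{\mathrm{ab}}$, but Bellingeri's surface relation $\sigma_{1}^{2}=\prod_{i=1}^{g}[a_{i},b_{i}^{-1}]$ already makes the class of the standard braid generator $2$-torsion in $B_{2}(S_{g})^{\mathrm{ab}}$, so this alone only forces the $\Z^{2g}$-part of $\rho(x)$ to vanish. The refined plan is to square the equation, obtaining $y^{-1}x^{2}y=x^{-2}$ with $x^{2}\in P_{2}(S_{g})$, and then to exploit the Fadell--Neuwirth sequence
\begin{equation*}
1\to \pi_{1}(S_{g}\setminus\{\ast\})\to P_{2}(S_{g})\to \pi_{1}(S_{g})\to 1
\end{equation*}
together with the bi-orderability of $\pi_{1}(S_{g})$ and of $P_{2}(S_{g})$ for $g\geq 1$, and the torsion-freeness of $B_{2}(S_{g})$, to force $x^{2}=1$ and hence $x=1$, contradicting $\sigma(x)=\overline{1}$.

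The main obstacle will be handling the subcase where $y$ itself has odd $\sigma$-parity, since then conjugation by $y$ on $P_{2}(S_{g})$ is a ``non-inner'' automorphism swapping the two strands, and need not preserve any bi-order on $P_{2}(S_{g})$. I expect this to be dealt with by a dedicated algebraic lemma from \resec{ornotS2}, most plausibly via the substitution $z=xy\in P_{2}(S_{g})$, which by the Klein relation satisfies $z^{2}=y^{2}$, thereby reducing the delicate twisted-conjugation question back to the pure braid group where the bi-orderability tools apply directly.
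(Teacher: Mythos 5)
Your reduction via \repr{gen} and your construction for the direction $\theta_{\tau}(\alpha)=\overline{0}$ (take $\varphi(\alpha)=1$ and $\varphi(\beta)$ any braid with non-trivial permutation) coincide with the paper's. For the hard direction your route is genuinely different: after squaring to get $y^{-1}x^{2}y=x^{-2}$ with $x^{2}\in P_{2}(S_{g})$, you invoke bi-orderability of $P_{2}(S_{g})$ (a theorem of Gonz\'alez-Meneses for pure braid groups of compact orientable surfaces) plus torsion-freeness of $B_{2}(S_{g})$ to force $x=1$. When the conjugator lies in $P_{2}(S_{g})$ this is airtight and noticeably shorter than the paper's argument, which instead shows $\alpha^{2}\in N=\ker(p_{1}\times p_{2})$ and derives a parity contradiction for the augmentation of $\alpha^{2}$ in $N_{\mathrm{Ab}}\cong\Z[\pi_{1}(S_{g})]$. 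What your approach buys is brevity; what it costs is reliance on an external non-trivial theorem that the paper avoids. One caveat: the Fadell--Neuwirth sequence together with bi-orderability of $\pi_{1}(S_{g})$ alone does not suffice, since conjugation by $y$ induces an automorphism of the fibre free group that need not preserve an arbitrary bi-order; you genuinely need a bi-invariant order on all of $P_{2}(S_{g})$, so cite Gonz\'alez-Meneses explicitly rather than sketching an extension argument.

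The one genuine gap is your treatment of the subcase $\theta_{\tau}(\alpha)=\theta_{\tau}(\beta)=\overline{1}$. Setting $z=xy\in P_{2}(S_{g})$, the identity $z^{2}=y^{2}$ is correct, but it is an equation between a pure braid and the square of a non-pure braid, and bi-orderability of $P_{2}(S_{g})$ says nothing about it; to conclude from $z^{2}=y^{2}$ you would need uniqueness of square roots in $B_{2}(S_{g})$ (or bi-orderability of the full group $B_{2}(S_{g})$), neither of which you establish or cite, and neither of which is available. The repair is immediate and stays inside your framework: from $yxy^{-1}=x^{-1}$ one computes $zxz^{-1}=x(yxy^{-1})x^{-1}=x^{-1}$, so $z=xy$ is itself a \emph{pure} conjugator inverting $x$, and squaring gives $zx^{2}z^{-1}=x^{-2}$ with both $z$ and $x^{2}$ in $P_{2}(S_{g})$, after which your bi-orderability argument applies verbatim. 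Alternatively, do what the paper does: the automorphism $(\alpha,\beta)\mapsto(\alpha,\beta\alpha)$ of $\pi_{1}(K)$ preserves the relation $\alpha\beta\alpha\beta^{-1}$, so by the equivalence of \repr{appendor} one may assume $\theta_{\tau}(\beta)=\overline{0}$ from the outset, eliminating this subcase before any braid-group work begins.
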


For subcase~(\ref{it:casee}), where $l=3$, we have:

\begin{thm}\label{case5}
Let $X$ be a finite-dimensional $CW$-complex equipped with a cellular free involution $\tau$, and suppose that $\pi_{1}(X/\tau)$ is isomorphic to $\pi_{1}(N_{3})$. Consider the presentation $\setang{v,a_{1},a_{2}}{v^2\cdot [a_{1},a_{2}]}$ of the fundamental group of $N_{3}$. Then the Borsuk-Ulam property holds for the triple $(X,\tau, S_g)$ if and only if $\theta_{\tau}$ is equivalent to the homomorphism $\map{\theta}{\pi_{1}(N_{3})}{\Z_{2}}$ given by $\theta(v)=\theta(a_1)=\overline{1}$ and $\theta(a_2)=\overline{0}$. 
\end{thm}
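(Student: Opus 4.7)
The plan is to apply Proposition~\ref{gen} in order to reduce the question to an algebraic lifting problem in the braid group $B_2(S_g)$: letting $p\colon B_2(S_g)\to B_2(S_g)/P_2(S_g)\cong\Z_2$ denote the canonical projection, the Borsuk-Ulam property for $(X,\tau,S_g)$ holds if and only if $\theta_{\tau}$ admits no lift $\phi\colon\pi_1(N_{3})\to B_2(S_g)$ along $p$. In terms of the presentation $\setang{v,a_1,a_2}{v^2[a_1,a_2]}$, a lift is precisely a triple $(V,A_1,A_2)\in B_2(S_g)^3$ satisfying $V^2[A_1,A_2]=1$ together with the coset conditions $p(V)=\theta_\tau(v)$ and $p(A_i)=\theta_\tau(a_i)$. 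Since precomposition by an automorphism of $\pi_1(N_{3})$ sends lifts to lifts, the existence of such a lift depends only on the equivalence class of $\theta_{\tau}$.

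The next step is to classify the equivalence classes of surjective homomorphisms $\pi_1(N_3)\to\Z_2$. Using the Appendix, combined with the automorphisms of $\pi_1(N_3)$ induced by Dehn twists along $a_1$ and $a_2$ (which act as transvections on $H_1(N_3;\Z_2)$) and the swap of $a_1$ and $a_2$, I would identify precisely three classes: one consisting only of the orientation character $w_1$ (given by $v\mapsto\overline{1}$, $a_1,a_2\mapsto\overline{0}$), one containing $\theta$ from the statement, and one containing the homomorphisms that vanish on $v$. The orientation character is preserved by every automorphism, so it is isolated in its own class; the $\Z_2$-intersection pairing on $H_1(N_3;\Z_2)$ then provides an invariant separating the other two classes. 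It therefore suffices to treat one representative from each class.

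For the two classes not containing $\theta$, I plan to construct explicit lifts, which by Proposition~\ref{gen} will show that the Borsuk-Ulam property fails. For $w_1$, the orientation double cover $S_2\to N_3$ combined with an embedding of $\pi_1(S_2)$ into $P_2(S_g)$ provided by the Fadell--Neuwirth fibration $1\to \pi_1(S_g\setminus\{*\})\to P_2(S_g)\to \pi_1(S_g)\to 1$ yields a pair $(A_1,A_2)\in P_2(S_g)^2$; one then selects $V\in p^{-1}(\overline{1})$ so that $V^2[A_1,A_2]=1$. A parallel geometric construction, using disjoint curves on $S_g$ arranged so that $A_1\notin P_2(S_g)$, $V\in P_2(S_g)$ and $A_2\in P_2(S_g)$, supplies the lift in the third class.

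The main obstacle, and the technical heart of the proof, is the non-existence of a lift in the class of $\theta$: one must prove that there is no triple $(V,A_1,A_2)$ with $V,A_1\notin P_2(S_g)$, $A_2\in P_2(S_g)$ and $V^2=[A_2,A_1]$. My strategy is to pass to the abelianisation $P_2(S_g)^{\mathrm{ab}}$, on which conjugation by any element of $B_2(S_g)\setminus P_2(S_g)$ induces a well-defined $\Z_2$-action $\sigma$. In $P_2(S_g)^{\mathrm{ab}}$ the relation becomes
\[
[V^2]=(1-\sigma)[A_2],
\]
so that $[V^2]\in(1-\sigma)\bigl(P_2(S_g)^{\mathrm{ab}}\bigr)$. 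As $V$ ranges over $p^{-1}(\overline{1})$, a direct calculation shows that $[V^2]$ varies by elements of $(1+\sigma)\bigl(P_2(S_g)^{\mathrm{ab}}\bigr)$, and therefore defines a well-defined invariant in the quotient
\[
P_2(S_g)^{\mathrm{ab}}\big/\bigl((1-\sigma)+(1+\sigma)\bigr)P_2(S_g)^{\mathrm{ab}}.
\]
The task is then to compute this invariant, using an explicit presentation of $B_2(S_g)$ and a convenient choice of $V$, and to verify that it is non-zero, which contradicts the existence of $A_1,A_2$ of the required form. The low-genus cases $g=1$ and $g=2$ may require a separate treatment because of the different structure of $P_2(S_g)^{\mathrm{ab}}$ in those cases. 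Once this non-existence is in hand, combining it with the constructions of the preceding paragraph completes the proof.
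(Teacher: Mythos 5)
Your overall architecture (reduce via Proposition~\ref{gen} to a lifting problem over $\pi\colon B_2(S_g)\to\Z_2$, use the Appendix to cut down to three equivalence classes, exhibit lifts in two classes, prove non-existence in the class of $\theta$) is the same as the paper's. However, the technical heart of your plan --- the non-existence argument --- fails. Your proposed obstruction is the class of $[V^2]$ in $P_2(S_g)^{\mathrm{ab}}/\bigl((1-\sigma)+(1+\sigma)\bigr)P_2(S_g)^{\mathrm{ab}}$, and while it is indeed independent of the choice of $V\in B_2(S_g)\setminus P_2(S_g)$, it is identically zero: taking $V=\sigma_1$ gives $V^2=B_{1,2}$, which by relation~(\ref{it:fadhuI}) of Theorem~\ref{th:fadhu} is a product of commutators and hence trivial in $P_2(S_g)^{\mathrm{ab}}$; for general $V=u\sigma_1$ one gets $[V^2]=(1+\sigma)[u]$, which already lies in $(1+\sigma)P_2(S_g)^{\mathrm{ab}}$. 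Concretely, with $\sigma$ exchanging the two $\Z^{2g}$ blocks of $P_2(S_g)^{\mathrm{ab}}$, the abelianised relation $(1+\sigma)[u]=(1-\sigma)[A_2]$ has plenty of solutions (any $[u]$ whose two blocks sum to zero, any $[A_2]$ with equal blocks), so the abelianisation yields only constraints of the type of \req{expsumzero}, never a contradiction. This is exactly why the paper must go one level deeper in the lower central series: the contradiction is extracted from the projection of the relation into $\overline Q$, a $\Z_2$-reduction of a quotient of $\Gamma_2(P_2(S_g))/\Gamma_3(P_2(S_g))$ (Propositions~\ref{prop:central} and~\ref{prop:solequat}), where a careful bookkeeping of the $\overline B$- and $\overline{e}_{i,j}$-coefficients shows incompatibility. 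As written, your invariant cannot detect anything, so the direction asserting that the Borsuk--Ulam property holds for the class of $\theta$ is not established.

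There is also a smaller but real gap in the constructive half. For the class of the orientation character ($\theta_{\tau}(v)=\overline1$, $\theta_{\tau}(a_i)=\overline0$) you need $A_1,A_2\in P_2(S_g)$ with $[A_2,A_1]=V^2$ for some $V\notin P_2(S_g)$; it is not enough to produce a pair $(A_1,A_2)$ from an embedding of $\pi_1(S_2)$ and then ``select'' $V$, since not every pure braid is the square of a non-pure braid --- the pair and $V$ must be chosen compatibly. The paper does this with a specific surface-braid relation: $V=\sigma$, $A_1=\rho_{1,1}^{-1}$, $A_2=\rho_{2,2}$, and $\rho_{2,2}\rho_{1,1}\rho_{2,2}^{-1}=\rho_{1,1}B^{-1}$ gives $[A_1,A_2]=B^{-1}=\sigma^{-2}$. (In the class with $\theta_{\tau}(v)=\overline0$ no geometry is needed: $\phi(v)=1$, $\phi(a_1)=x$, $\phi(a_2)=x^2$ works for any $x\in B_2(S_g)\setminus P_2(S_g)$.)
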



For subcases~(\ref{it:cased}) and~(\ref{it:casee}), observe that as a result of the relations of the given presentation of $\pi_1(N_2)$ (resp.\ $\pi_1(N_3)$), any map $\map{\theta}{J}{\Z_2}$ satisfying the conditions of Proposition~\ref{case4} (resp.\ Theorem~\ref{case5}) extends to a homomorphism, where $J$ is the set of generators of $\pi_1(N_2)$ (resp.\ $\pi_1(N_3)$). Therefore there is a double covering 
which corresponds to the kernel of $\theta$, and consequently the cases in question may be realised by some pair $(X, \tau)$ for some cellular free involution $\tau$.

Theorems~\ref{orien},~\ref{th:rootspns},~\ref{orien-orien} and~\ref{case5}, and Propositions~\ref{case1},~\ref{case2} and~\ref{case4} may be summarised as follows.

\begin{thm}
Let $X$ be a finite-dimensional $CW$-complex equipped with a cellular free involution $\tau$. Suppose that $S\neq \St[2],\rp$. Then the Borsuk-Ulam property holds for $(X,\tau,S)$ if and only if one of the following holds:
\begin{enumerate}[(a)]
\item $\pi_{1}(X/\tau)\cong \pi_{1}(\rp)$.
\item $S$ is orientable, and either
\begin{enumerate}[(i)]
\item $\pi_{1}(X/\tau)\cong \pi_1(N_{2})$, and $\theta_{\tau}(\alpha)=\overline{1}$ for the presentation of $N_{2}$ given in Proposition~\ref{case4}.
\item $\pi_{1}(X/\tau)\cong \pi_1(N_{3})$, and $\theta_{\tau}$ is equivalent to the homomorphism $\map{\theta}{\pi_{1}(N_{3})}{\Z_{2}}$ given by $\theta(v)=\theta(a_1)=\overline{1}$ and $\theta(a_2)=\overline{0}$ for the presentation of $N_{3}$ given in Theorem~\ref{case5}.
\end{enumerate}
\end{enumerate}
\end{thm}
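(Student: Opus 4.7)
The plan is a straightforward case analysis that aggregates the theorems and propositions already listed. The implicit hypothesis (inherited from the framework leading up to the statement) is that $\pi_{1}(X/\tau)$ is isomorphic to the fundamental group of a compact, connected surface without boundary; I split according to the orientability of $S$ and of this surface.

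First, suppose that $S$ is non-orientable (so $S\neq \rp$ by hypothesis). If $\pi_{1}(X/\tau)$ is the fundamental group of an orientable surface without boundary, Theorem~\ref{orien} shows that the Borsuk-Ulam property fails, and this subcase contributes nothing to the ``if'' list. If instead $\pi_{1}(X/\tau)$ is a non-orientable surface group, Theorem~\ref{th:rootspns} asserts that the property holds precisely when $\pi_{1}(X)=\brak{1}$; combined with the short exact sequence $1 \to \pi_{1}(X) \to \pi_{1}(X/\tau) \to \Z_{2} \to 1$ arising from the double covering $X\to X/\tau$, this is equivalent to $\pi_{1}(X/\tau)\cong\Z_{2}\cong\pi_{1}(\rp)$, giving exactly case~(a).

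Next, suppose that $S=S_{g}$ is orientable with $g\geq 1$. If $\pi_{1}(X/\tau)$ is an orientable surface group, Theorem~\ref{orien-orien} rules out the property, so this subcase does not appear in the conclusion. If $\pi_{1}(X/\tau)\cong\pi_{1}(N_{l})$ for some $l\geq 1$, I split on $l$: for $l=1$, Proposition~\ref{case1} gives the property, which coincides with case~(a) since $N_{1}=\rp$; for $l\geq 4$, Proposition~\ref{case2} shows the property fails; for $l=2$, Proposition~\ref{case4} characterises validity by the condition $\theta_{\tau}(\alpha)=\overline{1}$, yielding case~(b)(i); and for $l=3$, Theorem~\ref{case5} characterises validity by the equivalence of $\theta_{\tau}$ with the distinguished homomorphism $\theta$, yielding case~(b)(ii).

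Assembling these, the ``if'' direction is covered by the positive statements (Theorem~\ref{th:rootspns}, Proposition~\ref{case1}, Proposition~\ref{case4}, or Theorem~\ref{case5}), while the ``only if'' direction is covered by the negative statements (Theorem~\ref{orien}, Theorem~\ref{orien-orien}, Proposition~\ref{case2}) together with the converse implications of the two sharper results. No genuine obstacle is anticipated since the content has already been established piecewise; the only points requiring care are to record the redundancy between case~(a) and the $l=1$ subcase for $S$ orientable, and to verify that the conditions $\theta_{\tau}(\alpha)=\overline{1}$ and ``$\theta_{\tau}$ is equivalent to $\theta$'' are understood with respect to the specific presentations of $\pi_{1}(N_{2})$ and $\pi_{1}(N_{3})$ fixed in Proposition~\ref{case4} and Theorem~\ref{case5}.
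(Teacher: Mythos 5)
Your proposal is correct and coincides with the paper's own treatment: the theorem is stated there precisely as a summary of Theorems~\ref{orien}, \ref{th:rootspns}, \ref{orien-orien}, \ref{case5} and Propositions~\ref{case1}, \ref{case2}, \ref{case4}, assembled by exactly the case analysis you give (orientability of $S$ versus that of the surface group $\pi_{1}(X/\tau)$, then $l=1,2,3,\geq 4$ in the non-orientable case). Your observations about the implicit surface-group hypothesis on $\pi_{1}(X/\tau)$, the identification $\pi_{1}(X)=\brak{1}\Leftrightarrow\pi_{1}(X/\tau)\cong\Z_{2}\cong\pi_{1}(\rp)$ via the covering exact sequence, and the redundancy of the $l=1$ subcase with case~(a) are all consistent with the paper.
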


This paper is organised as follows. In Section~\ref{sec:GeBo}, we recall some general definitions, and state and prove Proposition~\ref{gen} which highlights the relation between the short exact sequence $1\to \pi_{1}(X) \to \pi_{1}(X/\tau)\to \Z_{2}\to 1$, and the short exact sequence $1\to P_{2}(S)\to B_{2}(S)\to \Z_{2}\to 1$ of the pure and full $2$-string braid groups of $S$. This proposition will play a vital r\^ole in much of the paper. Part~(\ref{it:propcase2}) of Proposition~\ref{gen} brings to light two special cases where $S=\St[2]$ or $S=\rp$. The case $S=\St[2]$ will be treated in Corollary~\ref{2dcw}. In Section~\ref{sec:rp2}, we deal with the case $S=\rp$, and prove Theorem~\ref{th:rp2case}. In Section~\ref{sec:rootspns}, we study the case where $S$ is a compact, non-orientable surface without boundary different from $\rp$, and prove Theorem~\ref{th:rootspns}. Finally, in Section~\ref{sec:ornotS2}, we analyse the case where $S$ is a compact, orientable surface without boundary different from $\St[2]$, and prove Theorems~\ref{orien-orien} and~\ref{case5} and Propositions~\ref{case1}--\ref{case4}. The proof of Theorem~\ref{case5} relies on a long and somewhat delicate argument using the lower central series of $P_{2}(S)$. 

\subsection*{Acknowledgements}

This work took place during the visit of the second author to the
Departmento de Mate\-m\'atica do IME~--~Universidade de S\~ao Paulo during
the periods 31\up{st}~October~--~10\up{th}~November~2008 and 20\up{th}~May~--~3\up{rd}~June 2009, and of the visit of the first author to the Laboratoire de Math\'ematiques Nicolas Oresme, Universit\'e de Caen during the period 21\up{st}~November~--~21\up{st}~December~2008. This work was supported by the international Cooperation USP/Cofecub project n\up{o} 105/06, by the CNRS/CNPq project n\up{o}~21119 and by the~ANR project TheoGar n\up{o} ANR-08-BLAN-0269-02. The writing of part of this paper took place while  the second author was at the Instituto de Matem\'aticas, UNAM Oaxaca, Mexico. He would like to thank the CNRS for having granted him a `d\'el\'egation' during this period, CONACYT for partial financial support through its programme `Estancias postdoctorales y sab\'aticas vinculadas al fortalecimiento de la calidad del posgrado nacional', and the Instituto de Matem\'aticas for its hospitality and excellent working atmosphere.

We would like to thank the referee for a careful reading of this paper, and for many suggestions that substantially improved the previous version. In particular, we point out his/her suggestion of the terminology  `Borsuk-Ulam property'. We also wish to thank Boju Jiang for his detailed comments on the paper.

\section{Generalities}\label{sec:GeBo}

Let $S$ be a compact surface without boundary, and let $G$ be a finite group that acts freely on a topological space $X$. If $\map{f}{X}{S}$ is a continuous map, we say that an orbit of the action is \emph{singular with respect to $f$} if the restriction of $f$ to the orbit is non injective. In particular, if $G=\Z_2$, a singular orbit is an orbit that is sent to a point by $f$. We study here the existence of singular orbits in the case where the group $G$ is $\Z_2$. The case where $G$ is an arbitrary finite cyclic group will be considered elsewhere.

The existence of a free action of $\Z_2$ on $X$ is equivalent to that of a fixed-point free involution
$\map{\tau}{X}{X}$. Let $(X, \tau, S)$ be a triple, where $\tau$ is a free involution on $X$, and let $\map{\theta_{\tau}}{\pi_1(X/\tau)}{\Z_2}$ be the homomorphism defined by the double 
covering $X \to X/\tau$. Recall that $F_2(S)=\setl{(x,y)\in S\times S}{x\neq y}$ is the $2$-point configuration space of $S$, $D_2(S)$ is the orbit space of $F_2(S)$ by the free $\Z_2$-action $\map{\tau_{S}}{F_2(S)}{F_2(S)}$, where $\tau_S(x,y)=(y,x)$, and $P_2(S)=\pi_1(F_2(S))$ and $B_2(S)=\pi_1(D_2(S))$ are the pure and full $2$-string braid groups respectively of $S$~\cite{FaN}. Let $\map{\pi}{B_{2}(S)}{\Z_{2}}$ denote the surjective homomorphism that to a $2$-braid of $S$ associates its permutation, and let $\map{p}{X}{X/\tau}$ denote the quotient map.

The following result will play a key r\^ole in the rest of the paper.

\begin{prop}\label{gen}
Let $X$ be  a $CW$-complex  equipped with a cellular free involution $\tau$, and let $S$ be a compact, connected surface without boundary. Suppose that the Borsuk-Ulam  property does not hold for the triple $(X, \tau, S)$. Then there exists a homomorphism $\map{\phi}{\pi_1(X/\tau)}{B_2(S)}$ that makes the following diagram commute:
\begin{equation}\label{eq:basic}
\begin{xy}*!C\xybox{%
\xymatrix{%
\pi_1(X/\tau) \ar@{-->}[rr]^{\phi} \ar[rdd]_{\theta_{\tau}} &  &
\ar[dld]^-{\pi}  B_2(S)  \\
&& \\
 & \Z_{2} &
 }}
\end{xy}
\end{equation}
Conversely, if such a factorisation $\phi$ exists then the Borsuk-Ulam property does not hold in the following cases:
\begin{enumerate}[(a)]
\item\label{it:propcase1} the space $X$ is a $CW$-complex of dimension less than or equal to two.
\item\label{it:propcase2} $S$ is a compact, connected surface without boundary different from $\St[2]$ and $\rp$.
\item\label{it:propcase3} $S$ is the projective plane and $X$ is a CW-complex of dimension less than or equal to three. 
\end{enumerate}
\end{prop}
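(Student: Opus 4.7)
The plan is to prove both directions by translating the Borsuk-Ulam condition into an equivariant lifting problem, and then invoking obstruction theory to realise the algebraic factorisation geometrically.

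For the forward implication, I would suppose that the triple $(X, \tau, S)$ fails the Borsuk-Ulam property, and pick a continuous map $\map{f}{X}{S}$ with $f(x)\neq f(\tau(x))$ for every $x\in X$. Then the map
\begin{equation*}
\map{\tilde{f}}{X}{F_{2}(S)}, \qquad \tilde{f}(x) = (f(x), f(\tau(x))),
\end{equation*}
is well defined and satisfies $\tau_{S}\circ \tilde{f}= \tilde{f}\circ \tau$, hence descends to a map $\map{\bar{f}}{X/\tau}{D_{2}(S)}$. Setting $\phi=\bar{f}_{\ast}$ on fundamental groups yields the dashed arrow; the commutativity $\pi\circ \phi = \theta_{\tau}$ follows because $\bar{f}$ pulls back the double covering $F_{2}(S)\to D_{2}(S)$ (classified by $\pi$) to the double covering $X\to X/\tau$ (classified by $\theta_{\tau}$).

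For the converse, given $\phi$ satisfying $\pi\circ \phi = \theta_{\tau}$, the central task is to realise $\phi$ by a continuous map $\map{\bar{f}}{X/\tau}{D_{2}(S)}$ with $\bar{f}_{\ast}=\phi$. Once this is achieved, the equality $\pi\circ \bar{f}_{\ast}=\theta_{\tau}$ implies that $\bar{f}$ pulls the double covering $F_{2}(S)\to D_{2}(S)$ back to $X\to X/\tau$, so $\bar{f}$ lifts to a $\tau$-equivariant $\map{\tilde{f}}{X}{F_{2}(S)}$. Writing $\tilde{f}(x)=(a(x),b(x))$, equivariance forces $b(x)=a(\tau(x))$, so $f=a$ satisfies $f(x)\neq f(\tau(x))$ by definition of $F_{2}(S)$, violating the Borsuk-Ulam property.

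The core of the proof therefore consists of constructing $\bar{f}$ by obstruction theory, handled case by case. In case~(\ref{it:propcase2}), where $S\neq \St[2],\rp$, the Fadell--Neuwirth fibration $S\setminus\brak{\ast}\to F_{2}(S)\to S$ has aspherical base and fibre, so $F_{2}(S)$ and therefore $D_{2}(S)$ are aspherical; $D_{2}(S)$ is a $K(B_{2}(S),1)$, so $\phi$ is realised on any CW-complex. In case~(\ref{it:propcase1}), where $\dim X\leq 2$, we build $\bar{f}$ directly on the CW-structure of $X/\tau$: choose loops in $D_{2}(S)$ realising $\phi$ on each $1$-cell's generator, and extend across each $2$-cell because $\phi$ sends the attaching word to a null-homotopic loop in $D_{2}(S)$; no higher obstructions appear since $\dim(X/\tau)\leq 2$. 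In case~(\ref{it:propcase3}), where $S=\rp$ and $\dim X\leq 3$, the same construction gives $\bar{f}$ on the $2$-skeleton of $X/\tau$; the obstruction to extending over each $3$-cell lies in $\pi_{2}(D_{2}(\rp))$, which vanishes because the universal cover of $F_{2}(\rp)$ is $\St[3]$, so $\pi_{2}(D_{2}(\rp))=\pi_{2}(\St[3])=0$.

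The main obstacle will be case~(\ref{it:propcase3}): unlike the other two cases, $D_{2}(\rp)$ is not aspherical and has non-trivial higher homotopy beyond $\pi_{2}$, so the argument really relies on the specific geometric fact that $F_{2}(\rp)$ has $\St[3]$ as its universal cover, which kills exactly the obstruction needed for the $3$-dimensional extension and no more. The other cases will follow by essentially formal obstruction-theoretic arguments, while the forward direction is a direct geometric construction.
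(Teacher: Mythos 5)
Your proposal is correct and takes essentially the same route as the paper: the forward direction via the map $x\mapsto (f(x),f(\tau(x)))$ descending to $D_2(S)$, and the converse via obstruction theory, using asphericity of $D_2(S)$ when $S\neq \St[2],\rp$, low dimension in case~(a), and the homotopy $3$-sphere universal cover of $D_2(\rp)$ in case~(c). The only cosmetic difference is that you obtain $\pi\circ\phi=\theta_{\tau}$ by identifying $X\to X/\tau$ with the pullback of the double covering $F_2(S)\to D_2(S)$, whereas the paper verifies the same commutativity by an explicit loop-lifting argument showing the induced map on the $\Z_2$ quotients is injective.
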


\begin{rem}
So if $X$ and $S$ are as in the first line of Proposition~\ref{gen}, and if further $S\neq \St[2],\rp$ then the Borsuk-Ulam property does not hold for the triple $(X,\tau,S)$ if and only if there exists a homomorphism $\map{\phi}{\pi_1(X/\tau)}{B_2(S)}$ that makes the diagram~(\ref{eq:basic}) commute.
\end{rem}

\begin{proof}[Proof of Proposition~\ref{gen}.]
Suppose first that the Borsuk-Ulam property does not hold for the triple $(X, \tau, S)$. Then there exists a map 
$\map{f}{X}{S}$ such that $f(x)\neq f(\tau(x))$ for all $x\in X$. Define the map $\map{\widehat{f}}{X}{F_{2}(S)}$ by $\widehat{f}(x)=( f(x), f(\tau(x)))$. Note that $\widehat{f}$ is $\Z_2$-equivariant with respect to the actions on $X$ and $F_{2}(S)$ given respectively by $\tau$ and $\tau_{S}$, and so induces a map $\map{\widetilde{f}}{X/\tau}{D_{2}(S)}$ of the corresponding quotient spaces defined by $\widetilde{f}(y)=\brak{f(x),f(\tau(x))}$, where $x\in p^{-1}(\brak{y})$. On the level of fundamental groups, we obtain the following commutative diagram of short exact sequences:
\begin{equation*}
\xymatrix{ 
1  \ar[r]  & \pi_1(X) \ar[r]^-{p_{\#}} \ar[d]_{\widehat{f}_{\#}} & \pi_{1}(X/\tau) 
\ar[r]^-{\theta_{\tau}} \ar[d]_{\widetilde{f}_{\#}} &  \Z_2  \ar[r] \ar[d]^{\rho} & 1\\
1 \ar[r] & P_2(S) \ar[r] & B_{2}(S) \ar[r]^-{\pi} &  \Z_2 \ar[r] & 1,}
\end{equation*}
where $\widehat{f}_{\#},\widetilde{f}_{\#}$ are the homomorphisms induced by $\widehat{f},\widetilde{f}$ respectively, and $\map{\rho}{\Z_{2}}{\Z_{2}}$ is the homomorphism induced on the quotients. We claim that $\rho$ is injective. To see this, let $\gamma\in \ker{\rho}$, let $x_{0}\in X/\tau$ be a basepoint, let $\widetilde{x_{0}}\in X$ be a lift of $x_{0}$, and let $c$ be a loop in $X/\tau$ based at $x_{0}$ such that $\theta_{\tau}(\ang{c})=\gamma$. Let $\widetilde{c}$ be the lift of $c$ based at $\widetilde{x_{0}}$. Thus $\widetilde{c}$ is an arc from $\widetilde{x_{0}}$ to a point of $\brak{\widetilde{x_{0}}, \tau(\widetilde{x_{0}})}$. We have that $\pi\circ \widetilde{f}_{\#}(\ang{c})=\rho\circ \theta_{\tau}(\ang{c})=\overline{0}$, so $\widetilde{f}_{\#}(\ang{c})\in \ker{\pi}=P_{2}(S)$. Further, $\widetilde{f}(c)=\brak{f(\widetilde{c}),f(\tau(\widetilde{c}))}$. Now $f(\widetilde{c})$ (resp.\ $f(\tau(\widetilde{c}))$) is an arc from $f(\widetilde{x_{0}})$ (resp.\ $f(\tau(\widetilde{x_{0}}))$) to an element of $\brak{f(\widetilde{x_{0}}), f(\tau(\widetilde{x_{0}}))}$. But $\widetilde{f}_{\#}(\ang{c})\in P_{2}(S)$, so 
$f(\widetilde{c})$ (resp.\ $f(\tau(\widetilde{c}))$) is a loop based at $f(\widetilde{x_{0}})$ (resp.\ $f(\tau(\widetilde{x_{0}}))$). Thus $\widetilde{c}$ could not be an arc from $\widetilde{x_{0}}$ to $\tau(\widetilde{x_{0}})$, for otherwise $\widetilde{x_{0}}\in X$ would satisfy $f(\widetilde{x_{0}})=f(\tau(\widetilde{x_{0}}))$, which contradicts the hypothesis. Hence $\widetilde{c}$ is a loop based at $\widetilde{x_{0}}$, so $\ang{\widetilde{c}}\in \pi_1(X,\widetilde{x_{0}})$, and $\ang{c}=p_{\#}(\ang{\widetilde{c}})$. Thus $\gamma=\theta_{\tau}(\ang{c})=\theta_{\tau}\circ p_{\#}(\ang{\widetilde{c}})=\overline{0}$, and $\rho$ is injective, as claimed, so is an isomorphism. Taking $\phi=\widetilde{f}_{\#}$ yields the required conclusion.

We now prove the converse for the three cases~(\ref{it:propcase1})--(\ref{it:propcase3}) of the second part of the proposition. Suppose that there exists a homomorphism $\map{\phi}{\pi_1(X/\tau)}{B_2(S)}$ that makes the diagram~(\ref{eq:basic}) commute. We treat the three cases of the statement in turn.
\begin{enumerate}[(a)]
\item By replacing each group $G$ in the algebraic diagram~(\ref{eq:basic}) by the space $K(G, 1)$, we obtain a diagram of spaces that is commutative up to homotopy.  The first possible non-vanishing homotopy group of the fibre of the classifying map $D_2(S) \to K(B_2(S),1)$ of the universal covering  of $D_2(S)$ is in dimension greater than or equal to two. Since $X$ is of dimension at most two,  by classical obstruction theory~\cite[Chapter~V, Section~4, Theorem~4.3, and Chapter~VI, Section~6, Theorem~6.13]{Wh},  
there exists a map $\map{\widetilde{f}}{X/\tau}{D_2(S)}$ that induces $\phi$ on the level of fundamental groups. The composition of a lifting to the double coverings $X \to F_2(S)$ of the map $\widetilde{f}$ with the projection onto the first coordinate of $F_2(S)$ gives rise to a map that does not collapse any orbit to a point, and the result follows. 

\item  Since $S$ is different from $\St[2]$ and $\rp$, the space $D_2(S)$ is a $K(\pi, 1)$, so all of its higher homotopy groups vanish. Arguing as in case~(\ref{it:propcase1}),  there is no obstruction to constructing a map $\widetilde{f}$ that induces $\phi$ on the level of fundamental groups, which proves the result in this case.  

\item Suppose that $S=\rp$. By~\cite{GG2}, it follows that the universal covering of $D_2(\rp)$ has the homotopy type of the $3$-sphere. Since $X$ has dimension less than or equal to three, using classical obstruction theory, we may construct a map $\widetilde{f}$ that satisfies the conditions, and once more the result follows. \qedhere
\end{enumerate}
\end{proof}

Proposition~\ref{fin} is an immediate consequence of the first part of Proposition~\ref{gen} above.

\begin{proof}[Proof of Proposition~\ref{fin}.]
The finiteness of $\pi_1(X)$ implies that of $\pi_1(X/\tau)$. Since $B_2(S)$ is torsion free, there is no factorisation $\phi$ of the algebraic diagram~(\ref{eq:basic}) of Proposition~\ref{gen}, and the result follows.
\end{proof}


\begin{rem}
If $S$ is $\St[2]$ (resp.\ $\rp$), the difficulty in proving the converse in the case $\dim(X)>2$ (resp.\ $\dim(X)>3$) occurs as a result of the non-vanishing of the higher homotopy groups of the $2$-sphere (resp.\ the $3$-sphere). 
\end{rem}

If $S$ is a compact, connected surface without boundary, by Proposition~\ref{gen}(\ref{it:propcase2}), there are two possibilities for $S$ where we do not have equivalence with the existence of a factorisation of the diagram~(\ref{eq:basic}). The case of $\rp$ will be treated in Section~\ref{sec:rp2}. For now, let us consider the case where the target is the sphere $\St[2]$.

\begin{prop}\label{equi} If $X$ is a $CW$-complex equipped with a cellular free involution $\tau$, a triple $(X, \tau, \St[2])$ satisfies the Borsuk-Ulam property if and only if the  classifying  map $\map{g}{X/\tau}{K(\Z_2, 1)}$  of the double covering $X \to X/\tau$ does not factor $($up to homotopy$)$ through the inclusion $\rp \to \R P^{\infty}=K(\Z_2, 1)$. 
\end{prop}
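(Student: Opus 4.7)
The plan is to avoid going through the braid-group machinery of Proposition~\ref{gen} altogether, and instead reduce the statement to the standard dictionary between classifying maps and equivariant maps. The key observation is that $\rp=\St[2]/\Z_{2}$ (antipodal action) is the $2$-skeleton of $\R P^{\infty}=\St[\infty]/\Z_{2}$, and that over $\rp$ the universal $\Z_{2}$-bundle restricts to the double cover $\St[2]\to\rp$. Consequently, a factorisation of the classifying map $g$ of $X\to X/\tau$ through $\rp$ up to homotopy corresponds, by the standard theory of principal $\Z_{2}$-bundles, to a $\Z_{2}$-equivariant map $\widehat{g}\colon X\to\St[2]$, where $\Z_{2}$ acts on $X$ via $\tau$ and on $\St[2]$ antipodally.

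First I would make this correspondence precise: choose a $\Z_{2}$-equivariant lift $\widetilde{g}\colon X\to\St[\infty]$ of $g$ (which exists as $\St[\infty]$ is contractible and the action is free). Then $g$ factors up to homotopy through $\rp\hookrightarrow\R P^{\infty}$ if and only if $\widetilde{g}$ is $\Z_{2}$-equivariantly homotopic to a map into $\St[2]$, which is precisely the existence of a $\Z_{2}$-equivariant map $\widehat{g}\colon X\to\St[2]$ (where the equivariance requires $\widehat{g}(\tau(x))=-\widehat{g}(x)$).

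Next I would show that this equivariance condition is equivalent to the failure of the Borsuk-Ulam property for $(X,\tau,\St[2])$. In one direction, any equivariant $\widehat{g}$ satisfies $\widehat{g}(\tau(x))=-\widehat{g}(x)\neq\widehat{g}(x)$ for all $x\in X$, so $\widehat{g}$ itself is a continuous map witnessing the failure of the Borsuk-Ulam property. For the converse, given $f\colon X\to\St[2]$ with $f(x)\neq f(\tau(x))$ for all $x$, view $\St[2]\subset\R^{3}$ and define
\begin{equation*}
\widehat{f}(x)=\frac{f(x)-f(\tau(x))}{\lVert f(x)-f(\tau(x))\rVert}\in\St[2].
\end{equation*}
Then $\widehat{f}(\tau(x))=-\widehat{f}(x)$, so $\widehat{f}$ is a $\Z_{2}$-equivariant map, providing the desired factorisation through $\rp$.

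The argument is essentially formal once the correspondence between classifying maps for double covers and equivariant maps is set up, so there is no real obstacle; the only point requiring a little care is ensuring that the \emph{homotopy} factorisation of $g$ through $\rp$ corresponds to the existence of \emph{any} $\Z_{2}$-equivariant map $X\to\St[2]$, which follows from the standard pullback characterisation of principal bundles together with the fact that the restriction $\St[2]\to\rp$ of the universal $\Z_{2}$-bundle classifies those principal $\Z_{2}$-bundles that admit an equivariant map to $\St[2]$.
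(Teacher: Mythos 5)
Your argument is correct, but it follows a genuinely different (and more elementary) route than the paper. The paper's proof goes through the configuration spaces: it cites the fact that $D_2(\St[2])$ has the homotopy type of $\rp$ (from the braid-group papers referenced there), converts a factorisation of $g$ through $\rp$ into a map $X/\tau \to D_2(\St[2])$, lifts it $\Z_2$-equivariantly to $F_2(\St[2])$, and projects to the first coordinate; the converse is dismissed as a routine argument (implicitly, $x\mapsto (f(x),f(\tau(x)))$ into $F_2(\St[2])$, descending to $D_2(\St[2])\simeq \rp$). You bypass $F_2(\St[2])$ and $D_2(\St[2])$ entirely: you use the elementary fact that the antipodal cover $\St[2]\to\rp$ is the restriction of the universal $\Z_2$-bundle over the $2$-skeleton of $\R P^{\infty}$, identify homotopy factorisations of the classifying map through $\rp$ with $\Z_2$-equivariant maps $X\to\St[2]$ (using uniqueness up to equivariant homotopy of equivariant maps into the contractible free space $\St[\infty]$, which needs $X$, hence $X/\tau$, to be a CW-complex --- as it is here since $\tau$ is cellular), and make the converse explicit with the classical normalisation $\widehat{f}(x)=(f(x)-f(\tau(x)))/\lVert f(x)-f(\tau(x))\rVert$. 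The two proofs are related by the equivariant homotopy equivalence $F_2(\St[2])\simeq \St[2]$ given by exactly this difference map, so the geometric content is the same; what your version buys is self-containedness (no appeal to the homotopy type of $D_2(\St[2])$ from the cited references) and an explicit converse, whereas the paper's version fits the configuration-space framework used throughout and reuses a fact it needs elsewhere anyway.
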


\begin{proof}
First note that the space $D_2(\St[2])$ has the homotopy type of $\rp$~\cite{GG2,GG4}.  
If there is a factorisation of $g$ (up to homotopy) through the inclusion $\rp \to \R P^{\infty}$ then we may construct a map $\map{g_1}{X/\tau}{D_2(\St[2])}$.
Consequently, there exists a $\Z_2$-equivariant lifting $\map{\widetilde{g}_1}{X}{F_2(\St[2])}$.
The composition of $\widetilde{g}_1$ with the projection onto the first coordinate of $F_2(\St[2])$ is a map
for which the Borsuk-Ulam property does not hold. Conversely, if the Borsuk-Ulam property does not
hold for the triple $(X, \tau, \St[2])$ then by a routine argument, the map which does not collapse any orbit gives rise to the factorisation. 
\end{proof}


We are now able to prove Corollary~\ref{2dcw}.

\begin{proof}[Proof of Corollary~\ref{2dcw}.]\mbox{}
\begin{enumerate}[(a)]
\item  By Proposition~2.2(iv) of~\cite{GHZ}, $(X, \tau, \R^3)$ satisfies the Borsuk-Ulam property if and only if  there is no map $\map{f}{X/\tau}{\rp}$ such that the pull-back of 
the non-trivial class of $H^1(\R P^{2}; \Z_2)$ is the first characteristic class of the $\Z_2$-bundle $X \to X/\tau$. 
But this is exactly the condition given by 
Proposition~\ref{equi} for $(X, \tau, \St[2])$.

\item Since the homomorphism $B_2(\St[2]) \to \Z_2$ is an isomorphism, the result follows from Proposition~\ref{gen}.
\item  and \emph{(\ref{it:2dcw3d})}. This is a consequence of the main result of~\cite{GNS}. The fact that there is only one involution on $\mathbb{R} P^3$ up to conjugacy follows from~\cite{My}. \qedhere
\end{enumerate}
\end{proof}

\begin{rem}
The `if' part of Corollary~\ref{2dcw}(\ref{it:2dcw3}) can also be proved by a very simple geometrical argument. For the converse, we do not know of a more direct proof. One may find other examples of triples such as those given in Corollary~\ref{2dcw}(\ref{it:2dcw3c}), i.e.\ triples $(X , \tau, \St[2])$, where $X$ is a $CW$-complex of dimension $3$, for which the Borsuk-Ulam property holds. See~\cite{GNS} for more details.
\end{rem}

To conclude this section, recall from the Introduction that if we are given a group $G$, two surjective homomorphisms $\phi_1,\phi_2\in  \operatorname{Hom}(G,\Z_2)$ are said to be \emph{equivalent} if there is an isomorphism $\map{\varphi}{G}{G}$ such that $\phi_1\circ \varphi=\phi_2$. We shall see that many algebraic questions will depend only on the equivalence classes of this relation due to the fact that if $\phi_1,\phi_2$ are equivalent then the existence of the commutative diagram~(\ref{eq:basic}) for $\phi_{1}$ is equivalent to the existence of the commutative diagram~(\ref{eq:basic}) for $\phi_{2}$. A consequence of this is that the number of cases to be analysed may be reduced. From the Appendix, we have the following results:
\begin{enumerate}[$($a$)$]
\item If $G$ is isomorphic to the fundamental group of an orientable compact, connected surface without boundary and of genus greater than zero then there is precisely one equivalence class.
\item  Suppose that $G$ is isomorphic to the fundamental group of the non-orientable surface $N_{l}$, where $l>1$.  
\begin{enumerate}[(i)]
\item If $l\neq 2$, there are three distinct equivalence classes.
\item If $l=2$, there are two distinct equivalence classes.
\end{enumerate}
\end{enumerate}
The knowledge of these classes will be used in conjunction with Proposition~\ref{gen}, notably in Section~\ref{sec:ornotS2}, to study the validity of the Borsuk-Ulam property.

\section{The case $S=\rp$}\label{sec:rp2}

In this section, we study the second exceptional case of Proposition~\ref{gen}(\ref{it:propcase2}) where the target $S$ is the projective plane $\rp$. Indeed, by the proof of the first part of Proposition~\ref{gen}, a triple $(X, \tau, \rp)$ does not satisfy the Borsuk-Ulam property if and only if there exists a map $\map{\widetilde{f}}{X/\tau}{D_2(\rp)}$ for which the choice $\phi=\widetilde{f}_{\#}$ makes the diagram~(\ref{eq:basic}) commute. Recall that $B_{2}(\rp)$ is isomorphic to the generalised quaternion group $\quat[16]$ of order $16$~\cite{vB}.

\begin{prop}  
Given the notation of Proposition~\ref{gen}, the non-existence of a factorisation $\map{\phi}{\pi_1(X/\tau)}{\quat[16]}$ of the homomorphism $\map{\theta_{\tau}}{\pi_1(X/\tau)}{\Z_2}$ through the 
homomorphism $\quat[16] \to \Z_2$ implies that the Borsuk-Ulam property  holds. Conversely, if a factorisation exists, the Borsuk-Ulam property holds if and only if the map $\map{f_1}{X/\tau}{K(\quat[16], 1)}$ obtained from the algebraic homomorphism $\phi$ does not factor through the map
$\St[3]/\quat[16] \to K(\quat[16], 1)$ given by the Postnikov system, where $K(\quat[16], 1)$ is the first stage of the  Postnikov tower of $\St[3]/\quat[16]$. In particular, 
if $X$ has dimension less than or equal to three, if the algebraic factorisation problem has a solution then the Borsuk-Ulam property does not hold.
\end{prop}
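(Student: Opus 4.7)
I would split the proposition into three parts and treat them separately. For the first assertion, if no factorisation $\phi$ of $\theta_{\tau}$ through $\quat[16]\to \Z_{2}$ exists, then the diagram~\reqref{basic} with $B_{2}(\rp)\cong\quat[16]$ cannot be completed, so by the first part of Proposition~\ref{gen} the Borsuk-Ulam property must hold. This is just the contrapositive.

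For the main equivalence, I would start from the reduction already carried out in the direct half of Proposition~\ref{gen}: the Borsuk-Ulam property fails for $(X,\tau,\rp)$ if and only if there is a continuous map $\widetilde{f}\colon X/\tau \to D_{2}(\rp)$ whose induced homomorphism on fundamental groups is $\phi$. The geometric input, already used in the proof of Proposition~\ref{gen}(\ref{it:propcase3}), is that the universal covering of $D_{2}(\rp)$ has the homotopy type of $\St[3]$, so $D_{2}(\rp)\simeq \St[3]/\quat[16]$. Consequently the first stage of the Postnikov tower of $D_{2}(\rp)$ is $K(\quat[16],1)$, and the first Postnikov map is precisely the map $\St[3]/\quat[16]\to K(\quat[16],1)$ appearing in the statement. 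Since $K(\quat[16],1)$ is an Eilenberg-Mac Lane space, the homomorphism $\phi$ is realised, uniquely up to homotopy, by a map $f_{1}\colon X/\tau\to K(\quat[16],1)$. The desired $\widetilde{f}$ exists (inducing $\phi$ on $\pi_{1}$) if and only if $f_{1}$ admits a homotopy factorisation through $\St[3]/\quat[16]\to K(\quat[16],1)$, which is exactly the criterion in the proposition.

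For the final assertion, when $\dim X\leq 3$, I would simply invoke Proposition~\ref{gen}(\ref{it:propcase3}), which already provides the converse unconditionally in that dimension range, so the existence of $\phi$ forces the failure of the Borsuk-Ulam property. Equivalently, from the Postnikov viewpoint, the successive obstructions to lifting $f_{1}$ along the Postnikov tower of $D_{2}(\rp)$ lie in $H^{k+1}(X/\tau;\pi_{k}(\St[3]))$ with $k\geq 3$, and these vanish automatically when $\dim(X/\tau)\leq 3$, so $f_{1}$ does admit the required factorisation.

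The main delicate point is to be precise about the Postnikov identification: one must verify that $\St[3]/\quat[16]\to K(\quat[16],1)$ really is the first Postnikov stage of $D_{2}(\rp)$, and that a continuous lift of $f_{1}$ through this stage does induce the prescribed homomorphism $\phi$ on $\pi_{1}$, not merely a conjugate of it. This amounts to choosing compatible basepoints and liftings, after which the commutativity of the triangle in~\reqref{basic} is automatic.
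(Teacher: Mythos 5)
Your argument is correct and is essentially the paper's own proof: the paper simply states that the proposition "follows straightforwardly from Proposition~\ref{gen}", and your write-up fills in exactly the intended ingredients (the reduction to the existence of a map $X/\tau\to D_2(\rp)$ inducing $\phi$, the identification $D_2(\rp)\simeq \St[3]/\quat[16]$ from~\cite{GG2}, and the vanishing of the obstruction groups when $\dim X\leq 3$). Nothing further is needed.
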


\begin{proof}
The proof follows straightforwardly from Proposition~\ref{gen}.
\end{proof}

Now we can prove the main result of this section.

\begin{proof}[Proof of Theorem~\ref{th:rp2case}.]
Since $X$ is of dimension less than or equal to three, the result is equivalent to the
existence of the homomorphism $\phi$ by Proposition~\ref{gen}(\ref{it:propcase3}). Suppose first that $X$ is simply connected. Then the fundamental group of the quotient $X/\tau$ is isomorphic to $\Z_2$. Since the only element of $B_2(\rp)$ of order $2$ is the full twist braid, which belongs to $P_2(\rp)$, the factorisation of diagram~(\ref{eq:basic}) does not exist, and this proves the `if' part.

Conversely, suppose that $X$ is non-simply connected. Then the fundamental group of $X/\tau$ is either isomorphic to the fundamental group of $S_g$, where $g>0$, or is isomorphic to the fundamental group of $N_l$, where $l>1$ (recall that $S_g$ (resp.\ $N_{l}$) is a compact, connected orientable (resp.\ non-orientable) surface without boundary of genus $g$ (resp.\ $l$)). Let us first prove the result in the case where $\pi_1(X/\tau)\cong\pi_1(S_g)$. The fundamental group of $S_g$ has the following presentation:
\begin{equation}\label{presor}
\setangr{a_1,a_2,\ldots,a_{2g-1},a_{2g}}{[a_1,a_2]\cdots[a_{2g-1},a_{2g}]\,}. 
\end{equation}
Consider the presentation $\setangl{x,y}{x^4=y^2,\; yxy^{-1}=x^{-1}}$ of $\quat[16]$. Then $x$ is of order $8$, and defining 
\begin{equation}\label{eq:defai}
\phi(a_i)=
\begin{cases}
x & \text{if $\theta_{\tau}(a_i)=\overline{1}$}\\
x^2 & \text{if $\theta_{\tau}(a_i)=\overline{0}$}
\end{cases}
\end{equation}
gives rise to a factorisation. Now suppose that $\pi_1(X/\tau)\cong\pi_1(N_{l})$. If $l\geq 3$ is odd, $\pi_1(N_l)$ has the following presentation:
\begin{equation}\label{eq:presnonorodd}
\setangr{v,a_1,a_2,\ldots,a_{l-2},a_{l-1}}{v^2\cdot [a_1,a_2]\cdots[a_{l-2},a_{l-1}]}.
\end{equation}
If $\theta_{\tau}(v)=\overline{0}$ then we define $\phi$ by $\phi(v)=e$ (the trivial element of $B_{2}(\rp)$), and $\phi(a_{i})$ by \req{defai}. If $\theta_{\tau}(v)=\overline{1}$ then we define $\phi(v)=xy$. Now $\phi(v^2)=x^4$ which is of order $2$, and so $\phi(v^2)$ is the full twist braid. Defining
\begin{equation*}
\begin{cases}
\text{$\phi(a_{1})=x^7y$ and $\phi(a_{2})=xy$} & \text{if $\theta_{\tau}(a_{1})= \theta_{\tau}(a_{2})=\overline{1}$}\\
\text{$\phi(a_{1})=x^2$ and $\phi(a_{2})=y$} & \text{if $\theta_{\tau}(a_{1})= \theta_{\tau}(a_{2})=\overline{0}$}\\
\text{$\phi(a_{1})=xy$ and $\phi(a_{2})=x^2$} & \text{if $\theta_{\tau}(a_{1})=\overline{1}$ and $ \theta_{\tau}(a_{2})=\overline{0}$}\\
\text{$\phi(a_{1})=x^2$ and $\phi(a_{2})=xy$} & \text{if $\theta_{\tau}(a_{1})=\overline{0}$ and $ \theta_{\tau}(a_{2})=\overline{1}$,}
\end{cases}
\end{equation*}
and the remaining $\phi(a_{i})$ by \req{defai}, we obtain a factorisation of the commutative diagram~(\ref{eq:basic}), and the result follows. The case where $l\geq 2$ is even is similar, and is left to the reader.
\end{proof}

\section{The non-orientable case with $S\neq \rp$}\label{sec:rootspns}

In this section, we consider the case where the target $S$ is a  compact, connected non-orientable surface without boundary and different from $\rp$. Recall 
that $\pi_1(X/\tau)$ is isomorphic to the fundamental group of a compact, connected surface without boundary. In this section, we prove Theorems~\ref{orien} and~\ref{th:rootspns}, which is the case where this surface is orientable or non orientable respectively.


\begin{proof}[Proof of Theorem~\ref{orien}.]
Let $h\geq 1$ be such that $\pi_1(X/\tau)\cong \pi_{1}(S_{h})$, and consider the presentation~(\ref{presor}) of $\pi_1(X/\tau)$. Let $x \in B_2(S)\setminus P_2(S)$. Then we define
\begin{equation}\label{eq:defaigen}
\phi(a_i)=
\begin{cases}
x & \text{if $\theta_{\tau}(a_i)=\overline{1}$}\\
x^2 & \text{if $\theta_{\tau}(a_i)=\overline{0}$.}
\end{cases}
\end{equation}
The fact that the relation of $\pi_1(X/\tau)$ is given by a product of commutators implies that $\phi$ is a well-defined homomorphism that makes the diagram~(\ref{eq:basic}) commute. The result then follows by applying Proposition~\ref{gen}(\ref{it:propcase2}).
\end{proof}

We now suppose that $\pi_1(X/\tau)$ is isomorphic to the fundamental group of  the non-orientable surface $N_{l}$. 

\begin{proof}[Proof of Theorem~\ref{th:rootspns}.]
The `if' part follows because $\pi_1(X/\tau)\cong \Z_2$ and $B_2(S)$ 
is torsion free. Indeed, there is no algebraic factorisation of the diagram~(\ref{eq:basic}) since the only homomorphism that makes the diagram commute is the trivial homomorphism. For the `only if' part, let $S=N_{m}$, where $m\geq 2$, and let $\pi_1(X/\tau)$ be isomorphic to the fundamental group of the non-orientable surface $N_{l}$, where $l\geq 2$. We first suppose that $l$ is even. Then  $\pi_1(X/\tau)$ has the following presentation:
\begin{equation}\label{eq:presnorleven}
\setangr{\alpha, \beta, a_1,a_2,\ldots,a_{2l-3},a_{2l-2}}{\alpha\beta\alpha\beta^{-1}[a_1,a_2] \cdots
[a_{2l-3},a_{2l-2}]}.
\end{equation}
From~\cite{S}, we have the following relations in the braid group $B_2(N_m)$:  
$\rho_{2,1}\rho_{1,1}\rho_{2,1}^{-1}=\rho_{1,1}B^{-1}$, $B=\sigma^2$,
$\sigma\rho_{1,1}\sigma^{-1}=\rho_{2,1}$ and 
$\sigma\rho_{2,1}\sigma^{-1}=B\rho_{1,1}B^{-1}$ (here $\sigma$ denotes the generator $\sigma_{1}$). We remark that the given elements of $B_2(N_m)$ are those of~\cite{S}, but we choose to multiply them from left to right, which differs from the convention used in~\cite{S}. Other presentations of braid groups of non-orientable surfaces may be found in~\cite{Be,GG3}
Now $\rho_{2,1}\rho_{1,1}\rho_{2,1}^{-1}=\rho_{1,1}B^{-1}$ implies that
 $\rho_{2,1}\rho_{1,1}\rho_{2,1}^{-1}B\rho_{1,1}^{-1}B^{-1}=B^{-1}$. Using the equation $\sigma\rho_{1,1}^{-1}\rho_{2,1}^{-1}\sigma^{-1}=(\sigma\rho_{1,1}^{-1}\sigma^{-1})(\sigma\rho_{2,1}^{-1}\sigma^{-1})=\rho_{2,1}^{-1}B\rho_{1,1}^{-1}B^{-1}$, this implies in turn that 
$\rho_{2,1}\rho_{1,1}\sigma\rho_{1,1}^{-1}\rho_{2,1}^{-1}\sigma^{-1}=B^{-1}=\sigma^{-2}$, and hence $\rho_{2,1}\rho_{1,1}\sigma\rho_{1,1}^{-1}\rho_{2,1}^{-1}= \sigma^{-1}$.

Now we construct the factorisation $\phi$. If $\theta_{\tau}(\alpha)=\overline{0}$ then define
$\phi(\alpha)=e$,  and $\phi(\beta)$ to be equal to any element of $B_2(S)\setminus P_2(S)$ 
if $\theta_{\tau}(\beta)= \overline{1}$, and to be equal to $e$ if $\theta_{\tau}(\beta)= \overline{0}$.  If $\theta_{\tau}(\alpha)= \overline{1}$ and $\theta_{\tau}(\beta)= \overline{0}$  then we define 
$\phi(\alpha)=\sigma$ and $\phi(\beta)=\rho_{2,1}\rho_{1,1}$, while if 
$\theta_{\tau}(\alpha)= \theta_{\tau}(\beta)=\overline{1}$, we define 
$\phi(\alpha)=\sigma$ and $\phi(\beta)=\rho_{2,1}\rho_{1,1}\sigma$.
For the remaining generators $a_i$, we define $\phi$ as in \req{defaigen}. It follows from the construction that $\phi$ is a well-defined homomorphism that makes the diagram~(\ref{eq:basic}) commute.

Finally let the fundamental group $\pi_1(X/\tau)$ be isomorphic to $\pi_{1}(N_{l})$, where $l\geq 3$ is odd. Consider the presentation~(\ref{eq:presnonorodd}) of $\pi_{1}(N_{l})$.
If $\theta_{\tau}(v)=\overline{0}$ then the result follows as in the proof of Theorem~\ref{orien}.
So suppose that $\theta_{\tau}(v)=\overline{1}$.  We have the relation $\rho_{2,1}B\rho_{2,1}^{-1}=B\rho_{1,1}^{-1}B^{-1}\rho_{1,1}B^{-1}$

According to Proposition~\ref{prop:appendor} in the Appendix it suffices to consider two cases. The first 
is $\theta_{\tau}(a_i)=\overline{0}$ for all $i$; the second is $\theta_{\tau}(a_2)=\overline{1}$ and 
$\theta_{\tau}(a_i)=\overline{0}$ for the other values of $i$. In the first case, we define $\phi(v)=\sigma$,
$\phi(a_1)= \rho_{1,1}^{-1}$, $\phi(a_2)= \rho_{2,1}$      and
for the remaining generators $a_i$, we define $\phi(a_{i})$ as in \req{defaigen}. The result follows via the relation of the presentation~(\ref{eq:presnonorodd}). As for the second case, we define $\phi(v)=\sigma$,  $\phi(a_1)=\sigma^{-1} $, $\phi(a_2)=\rho_{2,1}\rho_{1,1}$. 
and for the remaining $a_i$, we define 
$\phi(a_{i})$ as in \req{defaigen}. The result then follows.
\end{proof}

\section{The orientable case with $S\ne \St[2]$}\label{sec:ornotS2}

The purpose of this section is to study the Borsuk-Ulam property in the case where the target is a compact, connected orientable surface without boundary of genus greater than zero. This is the most delicate case which we will separate into several subcases.  As in the previous section, $\pi_1(X/\tau)$ is isomorphic to the fundamental group of a compact, connected surface without boundary. We first suppose that this surface is orientable.

\begin{proof}[Proof of Theorem~\ref{orien-orien}]
Similar to that of Theorem~\ref{orien}.
\end{proof}

We now suppose that $\pi_1(X/\tau)$ is isomorphic to the fundamental group 
of the non-orientable surface $N_{l}$. Let $S=S_g$, where $g\geq 1$. As we mentioned in the Introduction, we consider the following four subcases.
\begin{enumerate}[(1)]
\item $l=1$.
\item $l \geq 4$. 
\item $l=2$.
\item\label{it:case5ornonor} $l=3$.
\end{enumerate}

As we shall see, the first two cases may be solved easily. The third case is a little more  difficult. The fourth case is by far the most difficult, and will occupy most of this section. Some of the tools used in this last case will appear in the discussion of the first three cases. Let us now study these cases in turn.

\begin{enumerate}[Subc{a}se (1):]
\item $l=1$. This is the subcase where $\pi_1(X/\tau)$ is isomorphic to the fundamental group of the 
projective plane $\rp$.

\begin{proof}[Proof of Proposition~\ref{case1}.]
Since $B_2(S_g)$ is non trivial and torsion free, it follows that there is no algebraic factorisation of the diagram~(\ref{eq:basic}), and the result follows from Proposition~\ref{gen}. 
\end{proof}

\item $l \geq 4$. We recall a presentation of $P_{2}(S_{g})$ that may be found in~\cite{FH} and that shall be used at various points during the rest of the paper. Other presentations of $P_{2}(S_{g})$ may be found in~\cite{Be,GG1}.

\begin{thm}[\cite{FH}]\label{th:fadhu}
Let $g\geq 1$. The following is a presentation of $P_{2}(S_{g})$.
\begin{enumerate}
\item[\textbf{generators:}] $\rho_{i,j}$, where $i=1,2$ and $j=1,\ldots,2g$.
\item[\textbf{relations:}]\mbox{}
\begin{enumerate}[(I)]
\item\label{it:fadhuI} $[\rho_{1,1},\rho_{1,2}^{-1}]\cdots[\rho_{1,2g-1},
\rho_{1,2g}^{-1}]=B_{1,2}=B_{2,1}^{-1}=[\rho_{2,1},\rho_{2,2}^{-1}] \cdots
[\rho_{2,2g-1},\rho_{2,2g}^{-1}]$ (this defines the elements $B_{1,2}$ and $B_{2,1}^{-1}$).

\item $\rho_{2,l}\rho_{1,j}=\rho_{1,j}\rho_{2,l}$ where $1\leq j,l\leq 2g$, and $j<l$ (resp.\ $j<l-1$) if $l$ is odd (resp.\ $l$ is even).

\item $\rho_{2,k}\rho_{1,k}\rho_{2,k}^{-1}=\rho_{1,k}[\rho_{1,k}^{-1},B_{1,2}]$
and 
$\rho_{2,k}^{-1}\rho_{1,k}\rho_{2,k}=\rho_{1,k}[B_{1,2}^{-1},\rho_{1,k}]$ for all $1\leq k\leq 2g$.

\item $\rho_{2,k}\rho_{1,k+1}\rho_{2,k}^{-1} =B_{1,2}\rho_{1,k+1}
[\rho_{1,k}^{-1},B_{1,2}]$, and $\rho_{2,k}^{-1}\rho_{1,k+1}\rho_{2,k} =B_{1,2}^{-1}[B_{1,2},\rho_{1,k}]\rho_{1,k+1}
[B_{1,2}^{-1}, \rho_{1,k}]$, for all $k$ odd, $1\leq k\leq 2g$.

\item\label{it:fadhuV} $\rho_{2,k+1}\rho_{1,k}\rho_{2,k+1}^{-1}=\rho_{1,k}B_{1,2}^{-1}$, and
 $\rho_{2,k+1}^{-1}\rho_{1,k}\rho_{2,k+1}= \rho_{1,k}B_{1,2}[B_{1,2}^{-1},
\rho_{1,k+1}]$,  for all $k$ odd, $1\leq k\leq 2g$.

\item $\rho_{2,l}\rho_{1,j}\rho_{2,l}^{-1}=[B_{1,2},\rho_{1,l}^{-1}]\rho_{1,j}
[\rho_{1,l}^{-1}, B_{1,2}]$ and 
 $\rho_{2,l}^{-1}\rho_{1,j}\rho_{2,l}=[ \rho_{1,l},B_{1,2}^{-1}]\rho_{1,j}
[B_{1,2}^{-1},\rho_{1,l}]$ for all $1\leq l<j\leq 2g$ and $(j,l) \neq (2t,2t-1)$ for all $t\in \brak{1,\ldots,g}$.
\end{enumerate}
\end{enumerate}
\end{thm}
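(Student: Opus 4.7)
The plan is to derive this presentation from the Fadell--Neuwirth fibration $p\colon F_2(S_g)\to S_g$ given by projection onto the first coordinate, whose fibre over a basepoint $x_0\in S_g$ is $S_g\setminus\{x_0\}$. Since $S_g$ is aspherical for $g\geq 1$, the long exact sequence of homotopy groups collapses to a short exact sequence
\begin{equation*}
1\to \pi_1(S_g\setminus\{x_0\})\to P_2(S_g)\xrightarrow{p_\#} \pi_1(S_g)\to 1,
\end{equation*}
in which the kernel is free of rank $2g$ (since $S_g\setminus\{x_0\}$ has the homotopy type of a wedge of $2g$ circles) and the quotient is the surface group with its standard presentation $\setang{a_1,\ldots,a_{2g}}{[a_1,a_2]\cdots[a_{2g-1},a_{2g}]}$. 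By the standard machinery for presentations of group extensions, it suffices to exhibit generators and relations for the kernel and quotient, together with words encoding the conjugation action of chosen lifts of the quotient generators on the kernel generators, plus lifts of the quotient relations.

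I would then identify the generators geometrically. The $\rho_{2,j}$, for $j=1,\ldots,2g$, correspond to the standard free generators of $\pi_1(S_g\setminus\{x_0\})$, viewed as braids in which the first strand stays constant and the second strand traces a standard loop in the fibre; the $\rho_{1,j}$ are chosen lifts to $P_2(S_g)$ of the generators $a_j$ of $\pi_1(S_g)$, realised by braids in which the first strand traverses $a_j$ while the second strand is carried along by a chosen trivialisation of $p$ over a $1$-skeleton of $S_g$. Relations (II)--(VI) then record how conjugation by each $\rho_{1,j}$ acts on each $\rho_{2,k}$; these are computed by isotoping the second strand back to its initial position after the first strand has traversed the appropriate $a_j$ in the standard $4g$-gon model of $S_g$.

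Relation (I) accounts for the fact that the quotient relation $[a_1,a_2]\cdots[a_{2g-1},a_{2g}]=1$ does not lift to the identity in $P_2(S_g)$. One checks geometrically that the element $[\rho_{1,1},\rho_{1,2}^{-1}]\cdots[\rho_{1,2g-1},\rho_{1,2g}^{-1}]$ is precisely the pure braid $B_{1,2}$ in which the first strand encircles the second once; the symmetric computation exchanging the roles of the two strands gives the second equality $B_{1,2}=B_{2,1}^{-1}$ up to the orientation convention. The hard part, and the step that absorbs essentially all the work, is the explicit computation of the conjugation action producing relations~(\ref{it:fadhuI})--(\ref{it:fadhuV}): one must track how each fibre loop $\rho_{2,k}$ is deformed when the basepoint of the fibre is dragged around $a_j$, keeping careful account of the sides of the $4g$-gon that are crossed and of the position of $\rho_{2,k}$ relative to the arc traced by the first strand, which is precisely where the awkward factors involving $B_{1,2}$ and the commutators $[\rho_{1,k}^{\pm 1},B_{1,2}^{\pm 1}]$ enter. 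Once these conjugation formulas are obtained, verifying that no further relations are needed is routine, since a presentation of the kernel together with the action and a single factor-set representative suffices to present the extension.
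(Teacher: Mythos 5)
The paper itself offers no proof of this statement: it is quoted verbatim from Fadell--Husseini \cite{FH}, so the only thing to compare your attempt with is the standard derivation in the literature. Your outline is indeed that standard route, and the setup is correct: for $g\geq 1$ the Fadell--Neuwirth fibration gives the exact sequence $1\to\pi_1(S_g\setminus\brak{x_0})\to P_2(S_g)\to\pi_1(S_g)\to 1$, the kernel is free of rank $2g$, the quotient is a one-relator surface group, and a presentation of the extension is assembled from kernel generators, lifts of quotient generators, conjugation relations, and the expression of the lifted quotient relator as a kernel word.

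As a proof of the stated theorem, however, there is a genuine gap: the entire content of the statement is the explicit list of relations (I)--(VI), and you do not derive a single one of them. Saying that the conjugation formulas ``are computed by isotoping the second strand'' in the $4g$-gon model, and that this ``absorbs essentially all the work'', describes the task without performing it; nothing in your argument distinguishes the stated relations, with their precise $B_{1,2}$- and commutator-corrections, from any other plausible list, and relation (I) (the identification of both $2g$-fold commutator words with the braid $B_{1,2}$) is likewise asserted rather than checked. There is also a structural mismatch you would have to resolve: with your choice of the first-coordinate projection (kernel free on the $\rho_{2,j}$, lifts $\rho_{1,j}$), the extension machinery outputs relations of the form $\rho_{1,j}^{\pm1}\rho_{2,l}\rho_{1,j}^{\mp1}=w(\rho_{2,\ast})$, whereas relations (II)--(VI) are written the other way round --- conjugates of the $\rho_{1,j}$ by the $\rho_{2,l}$, expressed in terms of the $\rho_{1,\ast}$ and $B_{1,2}$ --- i.e.\ they are adapted to the extension $1\to\mathbb{F}_1\to P_2(S_g)\to\pi_1(S_g)\to 1$ coming from the second-coordinate projection, with the $\rho_{1,j}$ as the free kernel basis and the $\rho_{2,j}$ as lifts. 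So either the roles of the two strands should be exchanged at the outset, or an additional, not entirely routine, conversion between the two presentations is needed; as written, the proposal is a plan for a proof rather than a proof.
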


From the above relations, we obtain 
\begin{equation}\label{eq:fha}
\rho_{2,k}B_{1,2}\rho_{2,k}^{-1}= B_{1,2}\rho_{1,k}^{-1}B_{1,2}\rho_{1,k}B_{1,2}^{-1},
\end{equation}
and $\rho_{2,k}^{-1}B_{1,2}\rho_{2,k}=\rho_{1,k}B_{1,2}\rho_{1,k}^{-1}$. Let $\sigma=\sigma_{1}$ be the standard generator of $B_{2}(S_{g})$ that swaps the two basepoints, and set $B=B_{1,2}=\sigma^2$. The crucial relation that we shall require is 
\begin{equation*}
\text{ $\rho_{2,2i}\rho_{1,2i-1}\rho_{2,2i}^{-1}=\rho_{1,2i-1}B^{-1}$,  where $i\in \brak{1,\ldots,g}$.}
\end{equation*}



\begin{proof}[Proof of Proposition~\ref{case2}.] 
First assume that $l$ is odd. Then $N_{l}$ has the presentation given by \req{presnonorodd}. Using Proposition~\ref{prop:appendor}, for at least two 
generators $a_{2i-1}, a_{2i}$ with $1<i\leq g$, we have $\theta_{\tau}(a_{2i-1})= \theta_{\tau}(a_{2i})=\overline{0}$. If $\theta_{\tau}(v)=\overline{0}$ then 
the factorisation is defined as in the corresponding case of the proof of 
Theorem~\ref{orien}. So assume that  $\theta_{\tau}(v)=\overline{1}$, and define  
 $\phi(v)=\sigma$,  
$\phi(a_{2i-1})=\rho_{1,1}^{-1}$,  $\phi(a_{2i})=\rho_{2,2}$,
and for $j\notin \brak{2i-1,2i}$, set $\phi(a_j)=\sigma$ if $\theta_{\tau}(a_{j})=\overline{1}$, and $\phi(a_j)=e$ if $\theta_{\tau}(a_{j})=\overline{0}$. It follows from the relation of the presentation of $\pi_{1}(N_{l})$ given in equation~(\ref{eq:presnonorodd}) and the first relation of~(V) of Theorem~\ref{th:fadhu}
that $\phi$ is a well-defined homomorphism that makes the diagram~(\ref{eq:basic}) commute. 
The result follows from Proposition~\ref{gen}.

If $l\geq 4$ is even, the proof is similar. Once more, from Proposition~\ref{prop:appendor}, we have $\theta_{\tau}(a_{2i-1})= \theta_{\tau}(a_{2i})=\overline{0}$ for some $i\in \brak{1,\ldots, g}$. 
The fundamental group of the surface $N_{l}$ has the presentation given by \req{presnorleven}. Define $\phi(\alpha)=\sigma$, and set $\phi(\beta)=e$ if $\theta_{\tau}(\beta)=\overline{0}$ and $\phi(\beta)=\sigma$ if $\theta_{\tau}(\beta)=\overline{1}$. We define $\theta_{\tau}(a_{i})$ as in the case $l$ odd, and the result follows in a similar manner.
\end{proof}

Before going any further, we define some notation that shall be used to discuss the remaining two cases. For $i=1,2$, the two projections $\map{p_i}{P_2(S_g)}{P_1(S_g)}$ furnish a homomorphism $\map{p_1\times p_2}{P_2(S_g)}{P_1(S_g)\times P_1(S_g)}$ (which is the homomorphism induced by the inclusion $F_2(S_g) \to S_g\times S_g$). Let $N$ denote the kernel of $p_1\times p_2$. We thus have a short exact sequence 
\begin{equation}\label{eq:sesp2g}
1 \to N \to P_{2}(S_{g}) \stackrel{p_1\times p_2}{\to} P_1(S_g)\times P_1(S_g) \to 1.
\end{equation}
Let $(x_{1},x_{2})$ be a basepoint in $F_{2}(S_{g})$, let
\begin{equation}\label{eq:deffi}
\text{$\mathbb{F}_{1}=P_{1}(S_{g}\setminus \brak{x_{2}}, x_{1})$, and let $\mathbb{F}_{2}=P_{1}(S_{g}\setminus \brak{x_{1}}, x_{2})$.}
\end{equation}
We know that for $i=1,2$, $\mathbb{F}_{i}=\ker{p_{j}}$, where $j\in \brak{1,2}$ and $j\neq i$, and that $\mathbb{F}_{i}$ is a free subgroup of $P_{2}(S_{g})$ of rank $2g$ with basis $\brak{\rho_{i,1},\ldots,\rho_{i,2g}}$.
Now $N$ is also equal to the normal closure of $B$ in $P_{2}(S_{g})$ (see~\cite{FH}, and Proposition~3.2 in particular), and is a free group of infinite rank with basis $\setr{B_{\eta}=\eta B
\eta^{-1}}{\eta \in \mathbb{S}_{1}}$, where $\mathbb{S}_{1}$ is a
Reidemeister-Schreier system for the projection
$\pi_1(S_g \setminus \brak{x_{2}},x_{1})\to\pi_1(S_g,x_{1})$. 

\item $l=2$. Suppose that $\pi_1(X/\tau)\cong\pi_1(K)$, where $K$ denotes the Klein bottle.

\begin{proof}[Proof of Proposition~\ref{case4}.]
If $\theta_{\tau}(\alpha)=\overline{0}$, it is straightforward to check that we have a factorisation of diagram~(\ref{eq:basic}), and so by Proposition~\ref{gen}, the Borsuk-Ulam 
property does not hold for the triple $(X,\tau,S_{g})$. Conversely, assume that $\theta_{\tau}(\alpha)=\overline{1}$, and suppose that the Borsuk-Ulam property does not hold for the triple $(X,\tau,S_{g})$. We will argue for a contradiction. Since $\theta_{\tau}(\alpha)=\overline{1}$ we may assume by Proposition~\ref{prop:appendor} that $\theta_{\tau}(\beta)=\overline{0}$. By Proposition~\ref{gen}, we have a factorisation as in diagram~(\ref{eq:basic}). So there are elements which by abuse of notation we also denote $\alpha, \beta\in B_2(S_g)$ satisfying $\beta\alpha\beta^{-1}=\alpha^{-1}$. This relation implies that
\begin{equation}\label{eq:relnp2g}
\beta\alpha^2\beta^{-1}=\alpha^{-2},
\end{equation}
of which both sides belong to $P_2(S_g)$. Applying this homomorphism to \req{relnp2g}, we obtain two similar equations, each in $P_1(S_g)$. For each of these two equations, the subgroup of $P_{1}(S_{g})$ generated by $p_{i}(\alpha^2)$ and $p_{i}(\beta)$, for $i=1,2$, 
must necessarily have rank at most one (the subgroup is free Abelian if $g=1$, and is free if $g>1$, so must have rank one as a result of the relation). This implies that $p_{i}(\alpha^2)$ is trivial. Therefore $\alpha^2 \in N$. 
The Abelianisation $N_{\text{Ab}}$ of
$N$ is isomorphic to the group ring $\Z [\pi_1(S_g)]$, by means of the
natural bijection $\mathbb{S}_{1} \to \pi_1(S_g)$. Let
$\map{\lambda}{N}{N_{\text{Ab}}}$ denote the
Abelianisation homomorphism, and let $\map{\operatorname{exp}}
{\Z[\mathbb{S}_{1}]}{\Z}$ denote the evaluation homomorphism.

Since $\alpha^2\in N$, both sides of \req{relnp2g} belong to $N$. Equation~(\ref{eq:fha}) implies that $\operatorname{exp} \circ \lambda(\beta \alpha^2 \beta^{-1})=\operatorname{exp} \circ \lambda(\alpha^{2})$, and so $\operatorname{exp} \circ \lambda(\alpha^{2})=0$ by \req{relnp2g}. On the other hand, $\alpha\in B_{2}(S_{g})\setminus P_{2}(S_{g})$, so there exists $\gamma\in P_{2}(S_{g})$ satisfying $\alpha=\gamma \sigma$. Hence 
\begin{equation}\label{eq:alpha2odd}
\alpha^2 =\gamma \sigma \gamma \sigma^{-1} \ldotp B,
\end{equation}
and since $\alpha^2, B\in N$, we see that $\gamma \sigma \gamma \sigma^{-1}\in N$. Now $\gamma\in P_{2}(S_{g})$, so we may write $\gamma=w_{1}w_{2}$, where for $i=1,2$, $w_{i}\in \mathbb{F}_{i}$. Setting $w_{i}'=\sigma w_{i} \sigma^{-1}$ for $i=1,2$, we have that $w_{i}'\in \mathbb{F}_{j}$, where $j$ satisfies $\brak{i,j}=\brak{1,2}$. 
Further, $1=(p_{1}\times p_{2})(w)=(p_{1}\times p_{2})(w_{1}w_{2}
w_{1}'w_{2}')=(w_{1}w_{2}', w_{2}w_{1}')$ (we abuse notation slightly by writing the elements of the factors of $P_{1}(S_{g})\times P_{1}(S_{g})$ in the same form as the corresponding elements of $P_{2}(S_{g})$). 
Thus $w_{1}w_{2}'$ and $ w_{1}'w_{2}$, considered as elements of $P_{2}(S_{g})$, belong to $N$. We have that
\begin{equation*}
\sigma w_{1}w_{2}' \sigma^{-1}= w_{1}' \ldotp \sigma w_{2}' \sigma^{-1}= w_{1}' B w_{2} B^{-1}= w_{1}' w_{2}\ldotp w_{2}^{-1}B w_{2} \ldotp B^{-1},
\end{equation*}
and since $\exp\circ \lambda(\sigma w_{1}w_{2}' \sigma^{-1})=\exp\circ \lambda(w_{1}w_{2}')$, it follows that
\begin{equation}\label{eq:explam}
\exp\circ \lambda(w_{1}w_{2}')=\exp\circ \lambda(w_{1}'w_{2}).
\end{equation}
Now
\begin{equation*}
\gamma \sigma \gamma \sigma^{-1}= w_{1}w_{2} w_{1}'w_{2}'= w_{1}w_{2}' \ldotp w_{2}'^{-1}w_{2} (w_{1}'w_{2}) w_{2}^{-1} w_{2}',
\end{equation*}
and thus $\exp\circ \lambda(\gamma \sigma \gamma \sigma^{-1})=2\exp\circ \lambda(w_{1}w_{2}')$ by \req{explam}. In particular, $\exp\circ \lambda(\alpha^2)$ is odd by \req{alpha2odd}, which contradicts the fact that $\operatorname{exp} \circ \lambda(\alpha^{2})=0$. We thus conclude that the equation $\beta\alpha\beta^{-1}=\alpha^{-1}$, where $\alpha\in B_2(S_g)\setminus P_{2}(S_{g})$, $\beta\in P_{2}(S_{g})$, has no solution, and hence the Borsuk-Ulam property holds for the triple $(X,\tau,S_{g})$.
\end{proof}

\item\label{it:subcase4} $l=3$.
Using the results of Proposition~\ref{prop:appendor}, it suffices to consider the following three cases:
\begin{enumerate}[(a)]
\item\label{it:caseaaa} $\theta_{\tau}(v)=\theta_{\tau}(a_2)=\overline{0}$ and $\theta_{\tau}
(a_1)= \overline{1}$. 
\item\label{it:casebb} $\theta_{\tau}(v)= \overline{1}$ and $\theta_{\tau}(a_1)=\theta_{\tau}
(a_2)= \overline{0}$.
\item\label{it:casecc} $\theta_{\tau}(v)=\theta_{\tau}(a_1)=\overline{1}$ and $\theta_{\tau}(a_2)= \overline{0}$. 

\end{enumerate}
\end{enumerate}

Most of the rest of this section is devoted to analysing case~(\ref{it:casecc}), which is by far the most difficult of the three cases. Using the transformations of \repr{appendop}, we may show that case~(\ref{it:casecc}) is equivalent to $\theta_{\tau}(v)=\theta_{\tau}(a_2)=\overline{1}$ and $\theta_{\tau}(a_1)=\overline{0}$, and so by the discussion at the end of \resec{GeBo}, it suffices to consider the latter case. So in what follows, let $\map{\theta_{\tau}}{\pi_1(N_3)}{\Z_{2}}$ be the homomorphism given by  $\theta_{\tau}(v)=\theta_{\tau}(a_2)=\overline{1}$ and $\theta_{\tau}(a_1)=\overline{0}$. We first define some notation. By Proposition~\ref{gen}, we must decide whether there exist $a,c\in B_2(S_g)$ and $w\in P_2(S_g)$ such that
\begin{equation}\label{eq:basic6}
a^2[w,c]=1.
\end{equation}
Set
\begin{equation}\label{eq:subs}
\text{$a=\rho^{-1} \sigma$, and $c=\sigma v$, where $\rho,v\in P_2(S_g)$.}
\end{equation}
In order to determine the existence of solutions to~\req{basic6}, we begin by studying its projection onto $P_1(S_g)\times P_1(S_g)$ via the short exact
sequence~\reqref{sesp2g}, and its projection onto $(P_1(S_g))_{\text{Ab}}\times (P_1(S_g))_{\text{Ab}}$ under the homomorphism
\begin{equation}\label{eq:projab}
P_1(S_g)\times P_1(S_g) \to   (P_1(S_g))_{\text{Ab}}\times
(P_1(S_g))_{\text{Ab}},
\end{equation}
where $(P_1(S_g))_{\text{Ab}}\cong \Z^{2g}$ is the Abelianisation of
$P_1(S_g)$. Since $\rho,v$ and $w$ belong to $P_2(S_g)$, we may write
\begin{equation}\label{eq:subs2}
\text{$\rho=\rho_1\rho_2$, $v=v_1v_2$ and $w=w_1w_2$,}
\end{equation}
where for $i=1,2$, $\rho_i, v_i, w_i \in \mathbb{F}_i$, and
$\mathbb{F}_{i}$ is as defined in \req{deffi}. Given a word $w$ in
$\mathbb{F}_i$ written in terms of the basis $\set{\rho_{i,k}}{1\leq k\leq 2g}$, let $\widetilde{w}$ denote the word in $\mathbb{F}_j$, obtained by replacing each $\rho_{i,k}$ by
$\rho_{j,k}$, where $j\in \brak{1,2}$ and $j\neq i$.  The automorphism $\iota_{\sigma}$ of $P_2(S_g)$ given by conjugation by $\sigma$ has the property that its restriction to $\mathbb{F}_1$ (resp.\ to $\mathbb{F}_2$) coincides with the map that sends $w$ to $\widetilde w$ (resp.\ to $B \widetilde w B^{-1}$). 
The restriction of $\iota_{\sigma}$ to the intersection $\mathbb{F}_1\cap \mathbb{F}_2$, which is the normal closure of $B$, is invariant under $\iota_{\sigma}$. We have
\begin{equation}\label{eq:isigma}
\iota_{\sigma}(w)=\sigma w\sigma^{-1}=
\begin{cases}
\widetilde{w} & \text{if $w \in \mathbb{F}_1$}\\
B\widetilde  w B^{-1} & \text{if $w \in \mathbb{F}_2$,}
\end{cases}
\end{equation}
where in the first (resp.\ second) case, $w$ is written in terms of the basis $\set{\rho_{1,k}}{1\leq k\leq 2g}$ (resp.\ $\set{\rho_{2,k}}{1\leq k\leq 2g}$) of $\mathbb{F}_{1}$ (resp.\ $\mathbb{F}_{2}$).
We will later consider the automorphism induced by $\iota_{\sigma}$ on
a quotient of $P_2(S_g)$ by a term of the lower central series. 

\begin{lem}\label{reduc1}
With the notation introduced above, $\theta_{\tau}$ factors as in
diagram~(\ref{eq:basic}) if and only if for $i=1,2$, there exist $\rho_i,
 v_i, w_i \in \mathbb{F}_i$ such that
\begin{equation}\label{eq:main1}
B= \sigma \rho_1\rho_2\sigma^{-1}\rho_1\rho_2\sigma
v_1v_2w_1w_2v_2^{-1}v_1^{-1}\sigma^{-1}w_2^{-1}w_1^{-1},
\end{equation}
or equivalently, such that
\begin{equation}\label{eq:main2}
B= \widetilde \rho_1 B\widetilde \rho_2 B^{-1}\rho_1\rho_2\widetilde v_1
B\widetilde v_2B^{-1}\widetilde w_1B\widetilde w_2\widetilde
v_2^{-1}B^{-1}\widetilde v_1^{-1}w_2^{-1}w_1^{-1}.
\end{equation}
Furthermore, if we project \req{main2} onto each of the factors of
$P_1(S_g)\times P_1(S_g)$ then the following equations hold in
$P_1(S_g)$:
\begin{equation}\label{eq:w1w2}
\widetilde \rho_2\rho_1\widetilde  v_2 \widetilde w_2\widetilde
v_2^{-1}=w_1\quad \text{and} \quad \widetilde \rho_1\rho_2  \widetilde v_1
\widetilde w_1\widetilde v_1^{-1}=w_2,
\end{equation}
where by abuse of notation, we use the same notation for elements of
$P_2(S_g)$ and their projection in $P_1(S_g)$.
\end{lem}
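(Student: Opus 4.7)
The plan is to translate the abstract factorisation condition of Proposition~\ref{gen} into the concrete braid-group equation~(\ref{eq:main1}), rewrite it via the $\iota_\sigma$-conjugation formulas as~(\ref{eq:main2}), and finally project along $p_1 \times p_2$ to extract the necessary conditions~(\ref{eq:w1w2}).

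First, I would unwind the factorisation condition. By the presentation $\setang{v,a_1,a_2}{v^2[a_1,a_2]}$ of $\pi_1(N_3)$, giving $\phi$ amounts to choosing elements $a=\phi(v)$, $w=\phi(a_1)$, $c=\phi(a_2)$ in $B_2(S_g)$ satisfying $a^2[w,c]=1$ together with $\pi(a)=\pi(c)=\overline{1}$ and $\pi(w)=\overline{0}$. Because $\sigma\notin P_2(S_g)=\ker\pi$, the latter constraints amount to the parametrisations $a=\rho^{-1}\sigma$, $c=\sigma v$ of~(\ref{eq:subs}) with $\rho,v\in P_2(S_g)$, and to $w\in P_2(S_g)$. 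Substituting into $a^2[w,c]=1$ and expanding $a^{2}=\rho^{-1}(\sigma\rho^{-1}\sigma^{-1})B$ and $[w,c]=w\sigma vw^{-1}v^{-1}\sigma^{-1}$, a routine rearrangement reduces the equation to~(\ref{eq:main1}), once one further decomposes $\rho=\rho_1\rho_2$, $v=v_1v_2$ and $w=w_1w_2$ with $\rho_i,v_i,w_i\in\mathbb{F}_i$. Such a decomposition is available because $P_2(S_g)=\mathbb{F}_1\cdot\mathbb{F}_2$ set-theoretically: the inclusion $S_g\setminus\brak{x_2}\hookrightarrow S_g$ induces a surjection on $\pi_1$, so $p_1|_{\mathbb{F}_1}$ surjects onto $P_1(S_g)$; one then picks $\rho_1\in\mathbb{F}_1$ with $p_1(\rho_1)=p_1(\rho)$ and sets $\rho_2=\rho_1^{-1}\rho\in\ker p_1=\mathbb{F}_2$, and similarly for $v$ and $w$.

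Next, I pass from~(\ref{eq:main1}) to~(\ref{eq:main2}) by eliminating every occurrence of $\sigma^{\pm1}$ that is not already part of $B=\sigma^2$, using the conjugation rules~(\ref{eq:isigma}): $\sigma u\sigma^{-1}=\widetilde{u}$ if $u\in\mathbb{F}_1$, and $\sigma u\sigma^{-1}=B\widetilde{u}B^{-1}$ if $u\in\mathbb{F}_2$. Applying these rules telescopically to each $\sigma$-conjugated subword in~(\ref{eq:main1}) produces precisely the right-hand side of~(\ref{eq:main2}).

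Finally, I project~(\ref{eq:main2}) through $p_1\times p_2$. Since $B\in N=\ker(p_1\times p_2)$, every $B^{\pm1}$ collapses to the identity; since $p_1$ kills $\mathbb{F}_2$ and $p_2$ kills $\mathbb{F}_1$, the two coordinate projections retain disjoint sets of factors. Under the notational identification of $u\in\mathbb{F}_i$ with its $p_i$-image in $P_1(S_g)$, the $p_1$-projection of~(\ref{eq:main2}) simplifies, after cancellation, to $1=\widetilde\rho_2\rho_1\widetilde v_2\widetilde w_2\widetilde v_2^{-1}w_1^{-1}$, which rearranges to the first equation of~(\ref{eq:w1w2}); the $p_2$-projection yields the second equation by the same argument. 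The main obstacle is the bookkeeping in deriving~(\ref{eq:main2}), since each $\sigma$ that crosses an $\mathbb{F}_2$-factor introduces a pair of $B^{\pm1}$'s; beyond a systematic use of~(\ref{eq:isigma}) and the set-theoretic factorisation $P_2(S_g)=\mathbb{F}_1\cdot\mathbb{F}_2$, no further idea is required.
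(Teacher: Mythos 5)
Your proof is correct and follows essentially the same route as the paper's: substitute the parametrisation $a=\rho^{-1}\sigma$, $c=\sigma v$ into $a^2[w,c]=1$, rearrange to get \req{main1}, apply \req{isigma} to obtain \req{main2}, and project via $p_1\times p_2$ using $\ker p_1=\mathbb{F}_2$, $\ker p_2=\mathbb{F}_1$ to get \req{w1w2}. The only difference is that you spell out the set-theoretic factorisation $P_2(S_g)=\mathbb{F}_1\cdot\mathbb{F}_2$ and the translation of the factorisation condition into $a^2[w,c]=1$, which the paper leaves implicit.
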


\begin{proof}
Substituting \req{subs} into \req{basic6} leads to $(\rho^{-1}\sigma)^ 2[w,\sigma v]=1$, which is
equivalent in turn to
$(\rho^{-1}\sigma)(\rho^{-1}\sigma^{-1})\sigma^2[w,\sigma v]=1$, and to
$\sigma^2=\sigma\rho\sigma^{-1}\rho[\sigma v, w]$. Substituting \req{subs2} into this last equation yields \req{main1}. Using \req{isigma}, we obtain \req{main2}. The second part is also straightforward, using the fact
that $\ker{p_{1}}=\mathbb{F}_2$ and $\ker{p_{2}}=\mathbb{F}_1$.
\end{proof}

From \req{sesp2g}, the two equations of \reqref{w1w2} in $P_1(S_g)$ are equivalent respectively to the equations 
\begin{equation}\label{eq:z1z2N}
\text{$\widetilde \rho_2\rho_1\widetilde  v_2 \widetilde w_2\widetilde v_2^{-1}z_1=w_1$ and
$\widetilde \rho_1\rho_2  \widetilde v_1 \widetilde w_1\widetilde v_1^{-1}z_2=w_2$
in $P_2(S_g)$,}
\end{equation}
where $z_1, z_2 \in N$. An easy calculation proves the following:

\begin{lem}\label{lem:prodcomm}
Equation~\reqref{main2} may be rewritten in the form:
\begin{equation}\label{eq:comm}
\begin{aligned}
B =& \left[\widetilde \rho_1, B \widetilde  \rho_2 B^{-1} \rho_1\right]
\left(B\widetilde \rho_2 B^{-1}\rho_1 \left[\widetilde
\rho_1\rho_2\widetilde v_1, B\widetilde v_2
B^{-1}\right]\rho_1^{-1}B\widetilde\rho_2^{-1}B^{-1} \right)\\
& \left(B \widetilde \rho_2B^{-1}\rho_1B\widetilde v_2B^{-1}\left[\widetilde
\rho_1\rho_2\widetilde v_1\widetilde w_1, B\widetilde w_2\widetilde
v_2^{-1}B^{-1}\right]B\widetilde v_2^{-1}B^{-1}\rho_1^{-1}B\widetilde
\rho_2^{-1}B^{-1}\right)\\
& [B\widetilde\rho_2 B^{-1}\rho_1 B\widetilde v_2 \widetilde w_2\widetilde
v_2^{-1}B^{-1},\widetilde \rho_1\rho_2\widetilde v_1\widetilde w_1\widetilde
v_1^{-1} w_2^{-1}]\\
& \left(\widetilde \rho_1\rho_2\widetilde v_1 \widetilde w_1\widetilde
v_1^{-1} w_2^{-1}\right)\left(B\widetilde \rho_2 B^{-1}\rho_1B\widetilde
v_2 \widetilde w_2\widetilde v_2^{-1} B^{-1}w_1^{-1}\right).
\end{aligned}
\end{equation}\par\vspace{-0.7cm}\qed
\end{lem}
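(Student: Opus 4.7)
The statement is a purely formal identity in any group, and the plan is to verify it by direct expansion using a transparent telescoping pattern. To organise the computation I would introduce the shorthands $X_1=\widetilde\rho_1$, $X_2=\widetilde\rho_1\rho_2\widetilde v_1$, $X_3=X_2\widetilde w_1$, $Y_1=B\widetilde\rho_2 B^{-1}\rho_1$, $Y_2=B\widetilde v_2 B^{-1}$, $Y_3=B\widetilde w_2\widetilde v_2^{-1}B^{-1}$, together with $X_4=B\widetilde\rho_2 B^{-1}\rho_1 B\widetilde v_2\widetilde w_2\widetilde v_2^{-1}B^{-1}$ and $Y_4=X_3\widetilde v_1^{-1}w_2^{-1}$. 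A short calculation, using the identities $X_1^{-1}X_2=\rho_2\widetilde v_1$, $X_2^{-1}X_3=\widetilde w_1$, and $X_3^{-1}Y_4=\widetilde v_1^{-1}w_2^{-1}$ to recover the intermediate letters, rewrites the right-hand side of~(\ref{eq:main2}) in the compact form
\begin{equation*}
X_1 Y_1 X_1^{-1}\cdot X_2 Y_2 X_2^{-1}\cdot X_3 Y_3 X_3^{-1}\cdot Y_4\cdot w_1^{-1}.
\end{equation*}

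The pivotal observation, which I would establish first, is the identity $X_4=Y_1 Y_2 Y_3$: the product $B\widetilde\rho_2 B^{-1}\rho_1\cdot B\widetilde v_2 B^{-1}\cdot B\widetilde w_2\widetilde v_2^{-1}B^{-1}$ has two inner pairs $B^{-1}B$ that cancel, leaving precisely $X_4$. Given this, the rest is mechanical. Expanding the first three conjugated commutators of~(\ref{eq:comm}), the prefixes $Y_1\cdots Y_{i-1}$ and their inverses telescope across consecutive terms; after cancellation the product of these three factors collapses to $X_1 Y_1 X_1^{-1} X_2 Y_2 X_2^{-1} X_3 Y_3 X_3^{-1}\cdot Y_3^{-1} Y_2^{-1} Y_1^{-1}$.

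Multiplying on the right by $[X_4, Y_4]\cdot Y_4 X_4\cdot w_1^{-1}$ and using the elementary cancellation $[X_4, Y_4]\cdot Y_4 X_4=X_4 Y_4$, I would obtain an expression with a central factor $Y_3^{-1} Y_2^{-1} Y_1^{-1} X_4$. By the identity $X_4=Y_1 Y_2 Y_3$ this factor is trivial, so the whole expression reduces to the compact form of~(\ref{eq:main2}) displayed above. The only non-routine point will be recognising the identity $X_4=Y_1 Y_2 Y_3$; once it is in hand, the remainder is straightforward group-theoretic telescoping, so the main hazard is purely notational bookkeeping with the tildes and the $B^{\pm 1}$-factors.
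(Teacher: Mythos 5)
Your proposal is correct, and it is precisely the direct verification that the paper leaves to the reader as an ``easy calculation'': with your shorthands, expanding the three conjugated commutators and telescoping the prefixes $Y_1$ and $Y_1Y_2$, then using $[X_4,Y_4]\,Y_4X_4=X_4Y_4$ together with the key identity $X_4=Y_1Y_2Y_3$, collapses the right-hand side of~\reqref{comm} exactly to the compact form $X_1Y_1X_1^{-1}\,X_2Y_2X_2^{-1}\,X_3Y_3X_3^{-1}\,Y_4\,w_1^{-1}$ of the right-hand side of~\reqref{main2}. The only quibble is cosmetic: in the product $Y_1Y_2Y_3$ just one inner pair $B^{-1}B$ cancels (the junction between $Y_1$ and $Y_2$ is $\rho_1\cdot B$, which does not cancel), not two, but the identity $X_4=Y_1Y_2Y_3$ holds exactly as you state, so the argument stands.
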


\begin{rem}\label{rem:diffn}
Observe that the commutators in \req{comm} have the property
that one of the terms belongs to $\mathbb{F}_1$, while the other
belongs to $\mathbb{F}_2$. Consequently, each commutator belongs to
$N$ by \req{sesp2g}.
\end{rem}

\begin{cor}\label{cor:furthproj}
The elements $\widetilde \rho_1\rho_2\widetilde v_1 \widetilde w_1\widetilde
v_1^{-1} w_2^{-1} $ and $B\widetilde \rho_2 B^{-1}\rho_1B\widetilde v_2
\widetilde w_2\widetilde v_2^{-1} B^{-1}w_1^{-1}$ of $P_{2}(S_{g})$ belong to $N$. If we further project onto the Abelianisation
(cf.~\req{projab}), then the
projections of $\widetilde \rho_2\rho_1$ and  $\widetilde
\rho_1\rho_2$ belong to the commutator subgroup of the factors $P_1(S_g)\times \brak{1}$ and $\brak{1}\times P_1(S_g)$ of $P_1(S_g)\times P_1(S_g)$ respectively.
\end{cor}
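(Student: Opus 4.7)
The plan is to derive both claims directly from the pair of relations in~\reqref{w1w2}, which were obtained by projecting~\reqref{main2} onto each factor of $P_{1}(S_{g})\times P_{1}(S_{g})$.

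For the first claim, I would verify that each of the two displayed elements lies in $N=\ker(p_{1}\times p_{2})$ by computing its images under $p_{1}$ and under $p_{2}$ separately. In the element $\widetilde{\rho}_{1}\rho_{2}\widetilde{v}_{1}\widetilde{w}_{1}\widetilde{v}_{1}^{-1}w_{2}^{-1}$, every factor belongs to $\mathbb{F}_{2}=\ker p_{1}$, so its $p_{1}$-image is automatically trivial, while its $p_{2}$-image is exactly the word on the left-hand side of the second relation of~\reqref{w1w2}, hence is trivial. The element $B\widetilde{\rho}_{2}B^{-1}\rho_{1}B\widetilde{v}_{2}\widetilde{w}_{2}\widetilde{v}_{2}^{-1}B^{-1}w_{1}^{-1}$ is handled symmetrically: since $B\in N$ and every remaining factor lies in $\mathbb{F}_{1}=\ker p_{2}$, its $p_{2}$-image is trivial, while its $p_{1}$-image, after using $p_{1}(B)=1$, collapses to the first relation of~\reqref{w1w2}.

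For the second claim, I would abelianise the two relations of~\reqref{w1w2} inside $P_{1}(S_{g})$. The conjugations collapse, yielding in $H_{1}(S_{g};\Z)\cong\Z^{2g}$ the equations
\begin{equation*}
\overline{\widetilde{\rho}_{2}}+\overline{\rho}_{1}+\overline{\widetilde{w}_{2}}=\overline{w}_{1},\qquad \overline{\widetilde{\rho}_{1}}+\overline{\rho}_{2}+\overline{\widetilde{w}_{1}}=\overline{w}_{2}.
\end{equation*}
Under the canonical identification of the abelianisations of $\mathbb{F}_{1}$ and $\mathbb{F}_{2}$ with $H_{1}(S_{g};\Z)$, the tilde operation acts trivially, since it simply relabels $\rho_{i,k}$ as $\rho_{j,k}$ and both map to the same class in $H_{1}(S_{g};\Z)$. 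Hence $\overline{\widetilde{w}_{j}}=\overline{w}_{j}$ and $\overline{\widetilde{\rho}_{j}}=\overline{\rho}_{j}$ for $j=1,2$. Adding the two equations cancels $\overline{w}_{1}+\overline{w}_{2}$ on both sides and gives $2(\overline{\rho}_{1}+\overline{\rho}_{2})=0$; since $H_{1}(S_{g};\Z)$ is torsion free, $\overline{\rho}_{1}+\overline{\rho}_{2}=0$. Since $(p_{1}\times p_{2})(\widetilde{\rho}_{2}\rho_{1})$ lies in $P_{1}(S_{g})\times\brak{1}$ and abelianises to $\overline{\widetilde{\rho}_{2}}+\overline{\rho}_{1}=\overline{\rho}_{1}+\overline{\rho}_{2}=0$, and similarly $(p_{1}\times p_{2})(\widetilde{\rho}_{1}\rho_{2})\in\brak{1}\times P_{1}(S_{g})$ abelianises to $0$, the claim follows.

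I do not expect any serious obstacle here; the one subtlety to watch is that $w_{1}$ and $w_{2}$ project into different factors of $P_{1}(S_{g})\times P_{1}(S_{g})$ and cannot be compared directly, so the cancellation that produces $2(\overline{\rho}_{1}+\overline{\rho}_{2})=0$ must be carried out after identifying each factor's abelianisation with the common group $H_{1}(S_{g};\Z)$, which then lets one exploit its torsion freeness.
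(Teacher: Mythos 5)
Your proof is correct, and its first half coincides with the paper's argument: the paper deduces that each factor of the last line of \req{comm} lies in $N$ by projecting under $p_{1}\times p_{2}$ and invoking \req{z1z2N}, which is exactly your factor-by-factor verification from \reqref{w1w2}. For the second half you take a mildly different route. The paper keeps everything inside the second factor of $(P_1(S_g))_{\text{Ab}}$: it projects $\widetilde\rho_1\rho_2\widetilde v_1\widetilde w_1\widetilde v_1^{-1}w_2^{-1}$ there, transports the other relation into that same factor by applying the conjugation automorphism $\iota_{\sigma}$ to $\xi=\widetilde\rho_2\rho_1\widetilde v_2\widetilde w_2\widetilde v_2^{-1}w_1^{-1}$ (using that $N$ is $\iota_{\sigma}$-invariant), and then substitutes one equation into the other to obtain $(\widetilde\rho_1\rho_2)^2=1$, concluding by torsion-freeness. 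You instead abelianise each relation of \reqref{w1w2} in its own factor and identify the two abelianisations through the tilde operation, i.e.\ through the exponent-sum equality $\lvert x\rvert_{\rho_{1,i}}=\lvert\widetilde x\rvert_{\rho_{2,i}}$; adding the equations gives $2(\overline{\rho}_1+\overline{\rho}_2)=0$, and torsion-freeness finishes the argument just as in the paper. The two mechanisms are equivalent --- your identification is the abelianised shadow of $\iota_{\sigma}$ --- but your version avoids invoking the $\iota_{\sigma}$-invariance of $N$ and in effect re-derives the exponent-sum identities \reqref{expsumzero} of \relem{exp} directly, which the paper only establishes afterwards. The one point requiring care, which you correctly flag, is that the comparison must be carried out coordinatewise (per generator) after the identification, since $w_1$ and $w_2$ live in different factors; done that way, the argument is complete.
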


\begin{proof}
From \rerem{diffn}, the commutators on
the right-hand side of equation~(\ref{eq:comm})
belong to $N$, and hence the last line of this equation also belongs to $N$. Projecting each of the factors of this last line onto $P_1(S_g)\times P_1(S_g)$ and using \req{z1z2N} yield the first part of the corollary. For the second part, the projection of $\widetilde \rho_1\rho_2\widetilde v_1 \widetilde w_1\widetilde v_1^{-1} w_2^{-1}$ onto the second factor of $(P_1(S_g))_{\text{Ab}}\times (P_1(S_g))_{\text{Ab}}$ via $P_1(S_g)\times P_1(S_g)$ yields $\widetilde \rho_1\rho_2 \widetilde w_1 w_2^{-1}=1$, where once more we do not distinguish notationally between an element of $P_{2}(S_{g})$ and its projection in $(P_1(S_g))_{\text{Ab}}\times (P_1(S_g))_{\text{Ab}}$. Consider $\xi=\widetilde \rho_2\rho_1\widetilde  v_2 \widetilde w_2\widetilde v_2^{-1}w_1^{-1}\in P_{2}(S_{g})$. By \req{z1z2N}, $\xi \in N$. Now $\xi\in \mathbb{F}_{1}$, so $\iota_{\sigma}(\xi)= \rho_2 \widetilde \rho_1  v_2 w_2 v_2^{-1} \widetilde w_1^{-1}$ by \req{isigma}, and since $N$ is equal to the normal closure of $B$ in $P_{2}(S_{g})$, it is invariant under $\iota_{\sigma}$. The projection of $\iota_{\sigma}(\xi)$ onto the second factor of $(P_1(S_g))_{\text{Ab}}\times (P_1(S_g))_{\text{Ab}}$ via $P_1(S_g)\times P_1(S_g)$ thus yields $\rho_2 \widetilde \rho_1 w_2 \widetilde w_1^{-1}=1$. So in this factor of $(P_1(S_g))_{\text{Ab}}$, we have $\widetilde \rho_1\rho_2 \widetilde w_1 =w_2$ and $\rho_2 \widetilde \rho_1 w_2 =\widetilde w_1$. Substituting the second of these equations into the first gives $1=\rho_2 \widetilde \rho_1 \widetilde \rho_1\rho_2=(\widetilde \rho_1\rho_2)^2$ since $(P_1(S_g))_{\text{Ab}}$ is Abelian. The fact that the group
$(P_1(S_g))_{\text{Ab}}\cong \Z^{2g}$ is torsion free implies that $\widetilde \rho_1\rho_2=1$ in $(P_1(S_g))_{\text{Ab}}$. Hence $(1,\widetilde \rho_1\rho_2)$, considered as an element of $\brak{1}\times
P_1(S_g)$ belongs to its commutator subgroup. A similar argument proves the result for $\widetilde \rho_2\rho_1$.
%
\end {proof}

Let $k\in\brak{1,2}$ and $i\in \brak{1,\ldots, 2g}$. Using Theorem~\ref{th:fadhu}, it is not hard to see that if 
$x$ is an element of $P_{2}(S_{g})$ written as a word $w$ in the generators of that theorem then the sum of the exponents of $\rho_{k,i}$ appearing in $w$, which we denote by $\left\lvert x\right\rvert_{\rho_{k,i}}$, is a well-defined integer that does not depend on the choice of $w$. 


\begin{lem}\label{lem:exp} 
Let $i\in \brak{1,\ldots, 2g}$.
\begin{enumerate}[(a)]
\item\label{it:parta} Let $k\in\brak{1,2}$. The map $P_{2}(S_{g}) \to \Z$ given by $x \longmapsto \left\lvert x\right\rvert_{\rho_{k,i}}$ is a homomorphism whose kernel contains $N$.
\item Given a solution of equation (\ref{eq:comm}), we have:
\begin{align*}
\left\lvert \rho_1\right\rvert_{\rho_{1,i}}+\left\lvert \rho_2\right\rvert_{\rho_{2,i}}&=
-\left\lvert w_1\right\rvert_{\rho_{1,i}}+\left\lvert w_2\right\rvert_{\rho_{2,i}}\quad \text{and}\\
\left\lvert \rho_1\right\rvert_{\rho_{1,i}}+\left\lvert \rho_2\right\rvert
_{\rho_{2,i}}&=
\left\lvert w_1\right\rvert_{\rho_{1,i}}-\left\lvert w_2\right\rvert_{\rho_{2,i}}.
\end{align*}
Hence 
\begin{equation}\label{eq:expsumzero}
\text{$\left\lvert \rho_1\right\rvert_{\rho_{1,i}}=-\left\lvert \rho_2\right\rvert_{\rho_{2,i}}$ and $\left\lvert w_1\right\rvert_{\rho_{1,i}}=\left\lvert w_2\right\rvert_{\rho_{2,i}}$.}
\end{equation}
\end{enumerate}
\end{lem}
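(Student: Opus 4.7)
The plan is to prove (a) by checking well-definedness relation-by-relation, and then to derive (b) directly from Corollary~\ref{cor:furthproj}, which has already done the combinatorial work of extracting the two ``$N$-elements'' from the product-of-commutators identity \reqref{comm}.

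For part (a), begin by defining $\left\lvert \cdot\right\rvert_{\rho_{k,i}}$ as the exponent-sum homomorphism from the free group on the generators $\brak{\rho_{m,j}}$ to $\Z$; this is automatically a homomorphism, so the only thing to check is that each defining relator of the presentation of $P_{2}(S_{g})$ in \reth{fadhu} has $\rho_{k,i}$-exponent sum equal to zero. The key observation is that $B_{1,2}=B_{2,1}^{-1}$ is itself written in relation~(\ref{it:fadhuI}) as a product of commutators in the $\rho_{1,\ast}$ and likewise in the $\rho_{2,\ast}$, so carries zero exponent sum in \emph{every} generator. Then relation~(\ref{it:fadhuI}) is an equality between two such products of commutators, so is fine; relations~(II) are commutators; and for relations~(III)--(VI), both sides contain precisely one occurrence of the relevant $\rho_{m',j'}^{\pm 1}$, while the remaining factors on the right-hand side are products of commutators with entries in $B_{1,2}$ and the $\rho$'s, hence have zero exponent sum in every generator. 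Hence $\left\lvert \cdot\right\rvert_{\rho_{k,i}}$ descends to $P_{2}(S_{g})$. Since $\left\lvert B\right\rvert_{\rho_{k,i}}=0$ and the map is a homomorphism, $\left\lvert gBg^{-1}\right\rvert_{\rho_{k,i}}=0$ for every $g\in P_{2}(S_{g})$; as $N$ is the normal closure of $B$, we conclude $N\subset \ker{\left\lvert \cdot\right\rvert_{\rho_{k,i}}}$.

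For part (b), invoke Corollary~\ref{cor:furthproj}, which states that
\[
\xi_{1}=\widetilde \rho_1\rho_2\widetilde v_1 \widetilde w_1\widetilde v_1^{-1} w_2^{-1}\quad\text{and}\quad \xi_{2}=B\widetilde \rho_2 B^{-1}\rho_1B\widetilde v_2 \widetilde w_2\widetilde v_2^{-1} B^{-1}w_1^{-1}
\]
both lie in $N$. By part~(a), $\left\lvert \xi_{1}\right\rvert_{\rho_{2,i}}=\left\lvert \xi_{2}\right\rvert_{\rho_{1,i}}=0$. Apply $\left\lvert \cdot\right\rvert_{\rho_{2,i}}$ to $\xi_{1}$: each factor of $\xi_{1}$ lies in $\mathbb{F}_{2}$, so using the identity $\left\lvert \widetilde w\right\rvert_{\rho_{2,i}}=\left\lvert w\right\rvert_{\rho_{1,i}}$ for $w\in\mathbb{F}_{1}$, the resulting sum is $\left\lvert \rho_1\right\rvert_{\rho_{1,i}}+\left\lvert \rho_2\right\rvert_{\rho_{2,i}}+\left\lvert w_1\right\rvert_{\rho_{1,i}}-\left\lvert w_2\right\rvert_{\rho_{2,i}}=0$, which is the first displayed equation. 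Similarly, applying $\left\lvert \cdot\right\rvert_{\rho_{1,i}}$ to $\xi_{2}$ and using $\left\lvert B\right\rvert_{\rho_{1,i}}=0$ yields $\left\lvert \rho_1\right\rvert_{\rho_{1,i}}+\left\lvert \rho_2\right\rvert_{\rho_{2,i}}+\left\lvert w_2\right\rvert_{\rho_{2,i}}-\left\lvert w_1\right\rvert_{\rho_{1,i}}=0$, which is the second displayed equation. Adding and subtracting these two equations immediately gives \req{expsumzero}.

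The only genuinely delicate step is the case-by-case verification in part~(a) that each of the relations~(\ref{it:fadhuI})--(\ref{it:fadhuV}) (and the sixth family) has zero $\rho_{k,i}$-exponent sum; once that is in hand, part~(b) is a routine bookkeeping computation, with all the conceptual content—the identification of the two elements that must lie in $N$—having been supplied by Lemma~\ref{lem:prodcomm} and Corollary~\ref{cor:furthproj}.
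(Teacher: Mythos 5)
Your proof is correct and takes essentially the same route as the paper: part (a) is the same relator-by-relator check of the presentation in Theorem~\ref{th:fadhu} (with $B_{1,2}$ expanded as a product of commutators), and part (b) is the same exponent-sum bookkeeping, using $\left\lvert \widetilde x\right\rvert_{\rho_{2,i}}=\left\lvert x\right\rvert_{\rho_{1,i}}$. The only cosmetic difference is that you read the two $N$-memberships off Corollary~\ref{cor:furthproj} rather than directly from \req{z1z2N}; since that part of the corollary is just a repackaging of \req{z1z2N} (and precedes the lemma, so no circularity), the computations coincide.
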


\begin{proof}\mbox{}
\begin{enumerate}[(a)]
\item follows easily using the presentation of $P_{2}(S_{g})$ given in Theorem~\ref{th:fadhu}.

\item  This is a consequence of applying part~(\ref{it:parta}) to \req{z1z2N}, and using the fact that $\left\lvert x\right\rvert_{\rho_{1,i}}= \left\lvert \widetilde x\right\rvert_{\rho_{2,i}}$ for all $x\in P_{2}(S_{g})$.\qedhere
\end{enumerate}
\end{proof}


Let  $G=P_2(S_g)$, and for $i\in\N$, let $\Gamma_i(G)$ denote the terms of its lower central series. Recall that by definition,  $\Gamma_1(G)=G$ and
$\Gamma_{i+1}(G)=[\Gamma_i(G),G]$ for all $i\in \N$. By \reco{furthproj}, \req{comm} may be interpreted as a relation in $\Gamma_2(P_2(S_g))$. We shall study this equation by means of its projection onto $K \otimes \mathbb{Z}_2$, where $K$ is a certain quotient of $\Gamma_2(G)/\Gamma_3(G)$, which we shall define presently. We first recall some properties of $G/\Gamma_3(G)$.

\begin{lem}\label{lem:commu1}
We have the following relations in the group $G/\Gamma_3(G)$:
\begin{enumerate}[(a)]
\item\label{it:gamma3a} $[ab,c]$=$[a,c]\, [b,c]$, $[a,bc]$=$[a, b]\, [a,c]$ and $[a^s,b^t]=[a,b]^{st}$ for all $a,b,c\in G/\Gamma_3(G)$ and all $s,t\in \Z$.
\item\label{it:gamma3b} The automorphism of $G/\Gamma_3(G)$ induced by $\iota_{\sigma}$ is given by the map which sends the class of a word $w$ in $G$ to the class of the word $\widetilde{w}$.
\item\label{it:gamma3c} Let $1\leq i,j\leq 2g$. In $G/\Gamma_3(G)$ we have that $[\rho_{2,i+1},\rho_{1,i}]=B^{-1}$ and $[\rho_{2,i},\rho_{1,i+1}]=B$ for $i$ odd, and $[\rho_{2,i},\rho_{1,j}]=1$ otherwise (notationally, we do not distinguish between an element of $G$ and its class in $G/\Gamma_3(G)$).
\end{enumerate}
\end{lem}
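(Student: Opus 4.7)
The plan is to exploit the single structural observation that drives all three parts: \emph{$B=B_{1,2}$ lies in the commutator subgroup $\Gamma_{2}(G)$.} Indeed, relation~(I) of \reth{fadhu} writes $B$ as a product of commutators $[\rho_{1,2k-1},\rho_{1,2k}^{-1}]$, so for every $x\in G$ the commutator $[B,x]$ belongs to $\Gamma_{3}(G)$. Hence \emph{$B$ is central in $G/\Gamma_{3}(G)$}. Once this is noted, each part reduces to routine bookkeeping modulo $\Gamma_{3}(G)$.

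For part~(\ref{it:gamma3a}), I would invoke the universal commutator identities $[ab,c]=[a,c]^{b}[b,c]$ and $[a,bc]=[a,c][a,b]^{c}$ valid in any group. Since $[a,c]^{b}=[a,c]\cdot[[a,c],b]$ with $[[a,c],b]\in\Gamma_{3}(G)$, conjugation of a commutator is trivial on $G/\Gamma_{3}(G)$, and the two identities become $[ab,c]=[a,c][b,c]$ and $[a,bc]=[a,b][a,c]$. A straightforward induction on $|s|$ and $|t|$ then gives $[a^{s},b^{t}]=[a,b]^{st}$.

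For part~(\ref{it:gamma3b}), recall from \req{isigma} that $\iota_{\sigma}$ acts as $w\mapsto\widetilde{w}$ on $\mathbb{F}_{1}$ and as $w\mapsto B\widetilde{w}B^{-1}$ on $\mathbb{F}_{2}$. Because $B$ is central in $G/\Gamma_{3}(G)$, both formulas reduce to $w\mapsto\widetilde{w}$ modulo $\Gamma_{3}(G)$. The elements $\brak{\rho_{k,i}\,|\,k=1,2,\; 1\le i\le 2g}$ generate $G$, and the induced map on $G/\Gamma_{3}(G)$ is a homomorphism, so the rule ``class of $w$ is sent to class of $\widetilde{w}$'' determines the automorphism everywhere on the quotient.

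For part~(\ref{it:gamma3c}), I would proceed case-by-case through relations~(II)--(VI) of \reth{fadhu}, observing that every ``correction factor'' appearing on the right-hand side of those relations, namely $[\rho_{1,k}^{\pm1},B]$ or $[B,\rho_{1,l}^{\pm1}]$, lies in $\Gamma_{3}(G)$ and therefore vanishes in the quotient; the centrality of $B$ modulo $\Gamma_{3}(G)$ then lets one freely move $B$ past any factor while computing $[\rho_{2,l},\rho_{1,j}]=\rho_{2,l}\rho_{1,j}\rho_{2,l}^{-1}\rho_{1,j}^{-1}$. Specifically, (II) gives $[\rho_{2,l},\rho_{1,j}]=1$ when $j<l$ (or $j<l-1$ if $l$ is even); (III) gives $[\rho_{2,i},\rho_{1,i}]=1$; (IV) with $k$ odd gives $[\rho_{2,k},\rho_{1,k+1}]=B$; (V) with $k$ odd gives $[\rho_{2,k+1},\rho_{1,k}]=B^{-1}$; and (VI) gives $[\rho_{2,l},\rho_{1,j}]=1$ in the remaining cases. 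Collecting these yields precisely the table stated in the lemma. The only conceptual input is the identification of $B$ as a product of commutators; the rest is entirely mechanical, so I do not anticipate a genuine obstacle beyond ensuring no case is forgotten.
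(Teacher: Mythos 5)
Your proposal is correct and follows essentially the same route as the paper: part (a) via the standard commutator identities together with the centrality of $\Gamma_2(G)/\Gamma_3(G)$ in $G/\Gamma_3(G)$ and an induction for powers, and parts (b) and (c) via the single key fact that $B\in\Gamma_2(G)$ (so $B$ is central modulo $\Gamma_3(G)$ and the correction factors $[\rho_{1,k}^{\pm1},B]$ in relations (II)--(VI) of Theorem~\ref{th:fadhu} vanish in the quotient). Your case-by-case check of (II)--(VI) is just a more explicit write-up of what the paper leaves as "follows from the presentation", and it covers all index pairs.
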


\begin{proof} Part~(\ref{it:gamma3a}) is a consequence of the well-known
formulas $[ab,c]=a[b,c]a^{-1}[a,c]$ and  $[a, bc]=[a, b] b[a,c] b^{-1}$, 
and the fact that $\Gamma_2(G)/\Gamma_3(G)$ is central in $G/\Gamma_3(G)$. The fact that $[a^s,b^t]=[a,b]^{st}$ then follows by an inductive argument. Part~(\ref{it:gamma3b}) is a consequence of the description of the automorphism $\iota_{\sigma}$ given by \req{isigma}, and the fact that the class of $w$ is the same as the class of $BwB^{-1}$ in $G/\Gamma_3(G)$ because $B\in \Gamma_{2}(G)$.
Part~(\ref{it:gamma3c}) follows from the presentation of $P_{2}(S_{g})$ given in Theorem~\ref{th:fadhu}, using once more the fact that $B\in \Gamma_{2}(G)$. 
\end{proof}

\begin{prop}\label{prop:maineq} The projection of equation~(\ref{eq:comm}) onto $G/\Gamma_3(G)$ is given by: 
\begin{equation}\label{eq:prodcomm}
\begin{aligned}
B =& \left[\widetilde v_1, \widetilde w_2\right]
\left[\widetilde w_1, \widetilde v_2^{-1}\right]
 \left(\widetilde \rho_1\rho_2\widetilde v_1 \widetilde w_1\widetilde
v_1^{-1} w_2^{-1}\right)\left(\widetilde \rho_2 \rho_1\widetilde
v_2 \widetilde w_2\widetilde v_2^{-1} w_1^{-1}\right).
\end{aligned}
\end{equation}
\end{prop}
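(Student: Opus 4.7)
The plan is to project equation~(\ref{eq:comm}) onto $G/\Gamma_3(G)$ and verify that the resulting relation coincides with~(\ref{eq:prodcomm}). Two observations drive the simplification: since $B$ is a product of commutators in the generators of $P_2(S_g)$ (Theorem~\ref{th:fadhu}), it lies in $\Gamma_2(G)$ and is therefore central in $G/\Gamma_3(G)$; and each of the four commutators appearing in~(\ref{eq:comm}) also lies in $\Gamma_2(G)$ and is central modulo $\Gamma_3(G)$. The first fact erases every internal $B^{\pm 1}$ used as a conjugator in~(\ref{eq:comm}), while the second erases the whole conjugating prefix standing in front of each commutator. Applying both moves line by line yields
\begin{equation*}
B \equiv C_1 C_2 C_3 C_4 \cdot (\widetilde\rho_1\rho_2\widetilde v_1\widetilde w_1\widetilde v_1^{-1}w_2^{-1})(\widetilde\rho_2\rho_1\widetilde v_2\widetilde w_2\widetilde v_2^{-1}w_1^{-1}) \pmod{\Gamma_3(G)},
\end{equation*}
with $C_1=[\widetilde\rho_1,\widetilde\rho_2\rho_1]$, $C_2=[\widetilde\rho_1\rho_2\widetilde v_1,\widetilde v_2]$, $C_3=[\widetilde\rho_1\rho_2\widetilde v_1\widetilde w_1,\widetilde w_2\widetilde v_2^{-1}]$ and $C_4=[\widetilde\rho_2\rho_1\widetilde v_2\widetilde w_2\widetilde v_2^{-1},\widetilde\rho_1\rho_2\widetilde v_1\widetilde w_1\widetilde v_1^{-1}w_2^{-1}]$. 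The non-commutator tail already agrees with that of~(\ref{eq:prodcomm}), so the task reduces to verifying the congruence $C_1C_2C_3C_4\equiv[\widetilde v_1,\widetilde w_2][\widetilde w_1,\widetilde v_2^{-1}]$ in $G/\Gamma_3(G)$.

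Each $C_i$ is a commutator with one argument in $\mathbb{F}_1$ and the other in $\mathbb{F}_2$. Combining bilinearity (Lemma~\ref{lem:commu1}(a)) with the evaluation of the generator commutators (Lemma~\ref{lem:commu1}(c)), any such commutator with $x\in\mathbb{F}_2$ and $y\in\mathbb{F}_1$ reduces modulo $\Gamma_3(G)$ to $B^{\omega(\underline x,\underline y)}$, where $\underline x,\underline y\in\Z^{2g}$ are the exponent vectors of $x,y$ in the bases $\{\rho_{2,k}\}$ and $\{\rho_{1,k}\}$ respectively, and $\omega(\vec u,\vec v)=\sum_{i\text{ odd}}(u_iv_{i+1}-u_{i+1}v_i)$ is the standard symplectic form on $\Z^{2g}$. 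Writing $\vec a,\vec b,\vec e,\vec f,\vec g,\vec h$ for the exponent vectors of $\rho_1,\rho_2,v_1,v_2,w_1,w_2$ (the tilded elements having the same vectors), Lemma~\ref{lem:exp}(b) furnishes the two crucial identities $\vec a+\vec b=\vec 0$ and $\vec g=\vec h$.

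Substituting these identities into the exponent vectors of the arguments of $C_1,\ldots,C_4$ makes the $\rho$-contributions telescope. The argument $\widetilde\rho_2\rho_1$ of $C_1$ has exponent vector $\vec b+\vec a=\vec 0$, hence $C_1\equiv 1$; likewise the second argument of $C_4$ has exponent vector $\vec a+\vec b+\vec e+\vec g-\vec e-\vec h=\vec 0$, hence $C_4\equiv 1$. The remaining two reduce to $C_2\equiv B^{\omega(\vec e,\vec f)}$ and $C_3\equiv B^{\omega(\vec e+\vec g,\vec g-\vec f)}$, and bilinearity together with the antisymmetry $\omega(\vec g,\vec g)=0$ give $C_2C_3\equiv B^{\omega(\vec e,\vec g)-\omega(\vec g,\vec f)}\equiv[\widetilde v_1,\widetilde w_2][\widetilde w_1,\widetilde v_2^{-1}]$, which is exactly what is required. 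The only step demanding a little care is the exponent-vector bookkeeping for the longer commutators $C_3$ and $C_4$; once the two constraints of Lemma~\ref{lem:exp}(b) are in place, everything reduces to routine linear algebra in the abelian group $\Gamma_2(G)/\Gamma_3(G)$.
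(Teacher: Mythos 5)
Your proof is correct, and its skeleton is the same as the paper's: project \req{comm} onto $G/\Gamma_3(G)$, use the centrality of $\Gamma_2(G)/\Gamma_3(G)$ to delete the conjugating prefixes and the internal conjugations by $B$, and then exploit \relem{commu1} together with the exponent identities \reqref{expsumzero} of \relem{exp}(b). Your intermediate congruence $B\equiv C_1C_2C_3C_4\cdot(\text{tail})$ is exactly the paper's starting point. Where you diverge is in the finishing computation. The paper first deduces from \reqref{expsumzero} that $\widetilde\rho_1\rho_2$ and $\widetilde\rho_2\rho_1$ lie in $\Gamma_2(G)$, and then works formally with the surviving commutators: it expands them by bilinearity, cancels pairs such as $[\widetilde v_1,\widetilde v_2][\widetilde v_1,\widetilde v_2^{-1}]$ and $[\widetilde w_1,\widetilde w_2][\widetilde w_2,\widetilde w_1]$, and kills the residual factor $[\widetilde w_2,w_2^{-1}]$ by an explicit computation of its $B$-coefficient, leaving the two commutators $[\widetilde v_1,\widetilde w_2]$ and $[\widetilde w_1,\widetilde v_2^{-1}]$ intact. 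You instead evaluate every mixed commutator (one argument in $\mathbb{F}_1$, the other in $\mathbb{F}_2$) as a power $B^{\omega(\cdot,\cdot)}$ of the central element $B$, using bilinearity and \relem{commu1}(c), and then compare exponents via the linear constraints $\vec a+\vec b=\vec 0$ and $\vec g=\vec h$; your symplectic-form evaluation is in substance the computation the paper performs only later, in \repr{central}(b) and in the proof of \repr{solequat}, so you are front-loading it. Both routes are sound (I checked the sign conventions of $\omega$ against \relem{commu1}(c), and the reductions $C_1\equiv C_4\equiv 1$, $C_2C_3\equiv B^{\omega(\vec e,\vec g)-\omega(\vec g,\vec f)}\equiv[\widetilde v_1,\widetilde w_2][\widetilde w_1,\widetilde v_2^{-1}]$ are all right); the paper's version keeps \req{prodcomm} in the commutator form used for the subsequent projection onto $\overline Q$, whereas yours proves the marginally stronger statement that both sides reduce to the same explicit power of $B$, at the price of repeating part of that evaluation when it is needed again in \repr{solequat}.
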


\begin{proof} 
First note by Theorem~\ref{th:fadhu} that $G_{\text{Ab}}=(P_{1}(S_{g}))_{\text{Ab}}\times (P_{1}(S_{g}))_{\text{Ab}}\cong \Z^{2g} \times \Z^{2g}$, where a basis of the first (resp.\ second) $(P_{1}(S_{g}))_{\text{Ab}}$-factor consists of the images of $\rho_{1,i}$ (resp.\ $\rho_{2,i}$), for $i=1,\ldots, 2g$. 
The element $\widetilde \rho_{1} \rho_{2}$ of $G$ belongs to $\mathbb{F}_{2}$, and so $\left\lvert \widetilde \rho_{1} \rho_{2} \right\rvert_{\rho_{1,i}}=0$ for all $1\leq i\leq 2g$. Further,
\begin{equation*}
\left\lvert \widetilde \rho_{1} \rho_{2} \right\rvert_{\rho_{2,i}}= \left\lvert \widetilde \rho_{1}  \right\rvert_{\rho_{2,i}}+ \left\lvert \rho_{2} \right\rvert_{\rho_{2,i}}=\left\lvert \rho_{1}  \right\rvert_{\rho_{1,i}}+ \left\lvert \rho_{2} \right\rvert_{\rho_{2,i}}=0
\end{equation*}
by \req{expsumzero}. Thus $\left\lvert \widetilde \rho_{1} \rho_{2} \right\rvert_{\rho_{k,i}}=0$ for all $k\in\brak{1,2}$ and $i\in \brak{1,\ldots,2g}$. This implies that $\widetilde \rho_{1} \rho_{2}\in \Gamma_{2}(G)$. A similar argument shows that $\widetilde \rho_{2} \rho_{1}\in \Gamma_{2}(G)$.

We now take equation~(\ref{eq:comm}) modulo $\Gamma_{3}(G)$. Since $\widetilde \rho_{1} \rho_{2}, B, \widetilde \rho_{2} \rho_{1}\in \Gamma_{2}(G)$, and using the fact that $\Gamma_{2}(G)/\Gamma_{3}(G)$ is central in $G/\Gamma_{3}(G)$ as well as Lemma~\ref{lem:commu1}(\ref{it:gamma3a}), we obtain
\begin{align}
B &= \left[\widetilde v_1, \widetilde v_2 \right] \left[
\widetilde v_1\widetilde w_1, \widetilde w_2\widetilde
v_2^{-1}\right] \left[\widetilde v_2 \widetilde w_2\widetilde
v_2^{-1},\widetilde v_1\widetilde w_1\widetilde
v_1^{-1} w_2^{-1} \right] \left(\widetilde \rho_1\rho_2\widetilde v_1 \widetilde w_1\widetilde
v_1^{-1} w_2^{-1}\right)\left(\widetilde \rho_2 \rho_1 \widetilde
v_2 \widetilde w_2\widetilde v_2^{-1} w_1^{-1}\right)\notag\\
& = \left[\widetilde v_1, \widetilde v_2 \right] \left[
\widetilde v_1\widetilde w_1, \widetilde w_2\widetilde
v_2^{-1}\right] \left[\widetilde w_2,\widetilde w_1 w_2^{-1} \right] \left(\widetilde \rho_1\rho_2\widetilde v_1 \widetilde w_1\widetilde
v_1^{-1} w_2^{-1}\right)\left(\widetilde \rho_2 \rho_1 \widetilde
v_2 \widetilde w_2\widetilde v_2^{-1} w_1^{-1}\right) \notag\\
& = \left[\widetilde v_1, \widetilde v_2 \right] 
\left[
\widetilde v_1, \widetilde w_2\right] 
\left[
\widetilde v_1, \widetilde
v_2^{-1}\right] 
\left[
\widetilde w_1, \widetilde
v_2^{-1}\right] 
\left[\widetilde w_2,w_2^{-1} \right] 
\left(\widetilde \rho_1\rho_2\widetilde v_1 \widetilde w_1\widetilde
v_1^{-1} w_2^{-1}\right)\left(\widetilde \rho_2 \rho_1 \widetilde
v_2 \widetilde w_2\widetilde v_2^{-1} w_1^{-1}\right) \notag\\
& = 
\left[
\widetilde v_1, \widetilde w_2\right]  
\left[
\widetilde w_1, \widetilde
v_2^{-1}\right] 
\left[\widetilde w_2,w_2^{-1} \right] 
\left(\widetilde \rho_1\rho_2\widetilde v_1 \widetilde w_1\widetilde
v_1^{-1} w_2^{-1}\right)\left(\widetilde \rho_2 \rho_1 \widetilde
v_2 \widetilde w_2\widetilde v_2^{-1} w_1^{-1}\right)\label{eq:nearprodcomm}
\end{align}
in $G/\Gamma_{3}(G)$. Using Lemma~\ref{lem:commu1}(\ref{it:gamma3c}), we see that in $G/\Gamma_{3}(G)$, the only non-trivial contributions in $[\widetilde w_2,w_2^{-1}]$ come from terms of the form 
$[\rho_{1,i},\rho_{2,i+1}]=B$ and $[\rho_{1,i+1},\rho_{2,i}]=B^{-1}$ for $i$ odd. Thus in $G/\Gamma_{3}(G)$, the $B$-coefficient of $[\widetilde w_2,w_2^{-1}]$ is given by:
\begin{multline*}
-\sum_{\substack{1\leq i\leq 2g\\ \text{$i$ odd}}} \left\lvert \widetilde w_2 \right\rvert_{\rho_{1,i}} \left\lvert w_2 \right\rvert_{\rho_{2,i+1}}+ \sum_{\substack{1\leq i\leq 2g\\ \text{$i$ odd}}} \left\lvert \widetilde w_2 \right\rvert_{\rho_{1,i+1}} \left\lvert w_2 \right\rvert_{\rho_{2,i}}= 
-\sum_{\substack{1\leq i\leq 2g\\ \text{$i$ odd}}} \left\lvert w_2 \right\rvert_{\rho_{2,i}} \left\lvert w_2 \right\rvert_{\rho_{2,i+1}}+\\ \sum_{\substack{1\leq i\leq 2g\\ \text{$i$ odd}}} \left\lvert w_2 \right\rvert_{\rho_{2,i+1}} \left\lvert w_2 \right\rvert_{\rho_{2,i}}=0.
\end{multline*}
Hence $[\widetilde w_2,w_2^{-1}]=1$ in $G/\Gamma_{3}(G)$, and \req{nearprodcomm} thus reduces to \req{prodcomm}.
\end{proof}

\begin{rem}\label{rem:summar}
We summarise some properties of the factors of equation~(\ref{eq:prodcomm}):
\begin{enumerate}[(a)]
\item The factors  $ [\widetilde v_1, \widetilde w_2], [\widetilde w_1, \widetilde v_2^{-1}]$ belong to $N$ because $\widetilde v_1, \widetilde w_1 \in \mathbb   F_2$ 
and $ \widetilde w_2, \widetilde v_2^{-1} \in \mathbb F_1$.
\item The factors  $\widetilde \rho_1\rho_2\widetilde v_1 \widetilde w_1\widetilde
v_1^{-1} w_2^{-1}$ and $\widetilde \rho_2 \rho_1\widetilde
v_2 \widetilde w_2\widetilde v_2^{-1} w_1^{-1}$ belong to $N$ since their images in $P_1(S_g)\times P_1(S_g)$ belong to the subgroups $P_1(S_g)\times \brak{1}$,
$\brak{1}\times P_1(S_g)$ respectively, and $B$ projects to the trivial element.
\item\label{it:summarc} The elements  $(\widetilde \rho_1\rho_2), [\widetilde v_1, \widetilde w_1], \widetilde
w_1 w_2^{-1}$ belong to $\mathbb F_2\cap \Gamma_2(G)$, and 
$ \widetilde \rho_2\rho_1, [\widetilde v_2, \widetilde w_2], \widetilde w_2 w_1^{-1}$ belong to $\mathbb F_1\cap \Gamma_2(G)$. 
\end{enumerate}
\end{rem}

We now compute the group $\Gamma_2(G)/\Gamma_3(G)$.

\begin{prop}\label{prop:central}\mbox{}
\begin{enumerate}[(a)]
\item\label{it:part1} The group $\Gamma_2(G)/\Gamma_3(G)$ is free Abelian of rank $2g(2g-1)-1$; a basis is given by the classes of the elements of $\setl{e_{k,i,j}, B}{\text{$k=1,2,\,1 \leq i<j\leq 2g$ and $i\neq 2g-1$}}$, where $e_{k,i,j}=[\rho_{k,i}, \rho_{k,j}]$ for all $k=1,2$ and $1 \leq i<j\leq 2g$.

\item\label{it:part2} Given $v,w\in P_2(S_g)$, the commutator $[v,w]$, considered as an element of $G/\Gamma_3(G)$, belongs to $\Gamma_2(G)/\Gamma_3(G)$, and
\begin{enumerate}[(i)]

\item\label{it:part2i} $\left\lvert[v,w]\right\rvert_{e_{k,i,j}}=d_{k,i,j}(v,w)$ for $k=1,2$,  $1\leq i<j\leq 2g$  and   $(i, j)\ne (2t-1,2t)$ for all $1\leq t \leq g$,
\item\label{it:part2ii}  $\left\lvert[v,w]\right\rvert_{e_{k,2i-1,2i}}=d_{k,2i-1,2i}(v,w)-d_{k,2g-1,2g}(v,w)$  for all $k=1,2$ and $1 \leq i<g$,

\item\label{it:part2iii} $\displaystyle \left\lvert[v,w]\right\rvert_{B}=-d_{1,2g-1,2g}(v,w)-d_{2,2g-1,2g}(v,w)+ \sum_{1\leq i\leq g} a_{2i-1,2i}(v,w)$,
\end{enumerate}
where $\displaystyle \left\lvert u\right\rvert_{B}$ and $\displaystyle \left\lvert u\right\rvert_{e_{k,i,j}}$ denote the exponent sum of the element $u\in \Gamma_2(G)/\Gamma_3(G)$ with respect to the basis elements of part~(\ref{it:part1}), and where
\begin{equation*}
d_{k,i,j}(v,w)= 
\begin{vmatrix}
\left\lvert v\right\rvert_{\rho_{k,i}}& \left\lvert v\right\rvert_{\rho_{k,j}}\\
\left\lvert w\right\rvert_{\rho_{k,i}}& \left\lvert w\right\rvert_{\rho_{k,j}}
\end{vmatrix}\;\text{and} \; 
a_{2i-1,2i}(v,w)= 
\begin{vmatrix}
\left\lvert v\right\rvert_{\rho_{2, 2i-1}}& \left\lvert v\right\rvert_{\rho_{2, 2i}}\\
\left\lvert w\right\rvert_{\rho_{1, 2i-1}} & \left\lvert w\right\rvert_{\rho_{1, 2i}}
\end{vmatrix}+
\begin{vmatrix}
\left\lvert v\right\rvert_{\rho_{1, 2i-1}} & \left\lvert v\right\rvert_{\rho_{1, 2i}}\\
\left\lvert w\right\rvert_{\rho_{2, 2i-1}} & \left\lvert w\right\rvert_{\rho_{2, 2i}} 
\end{vmatrix}.
\end{equation*}
\end{enumerate}
\end{prop}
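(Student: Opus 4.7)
For part~(\ref{it:part1}), I would first use the bilinearity and antisymmetry of the commutator bracket on $G/\Gamma_{3}(G)$ provided by \relem{commu1}(\ref{it:gamma3a}). Since each defining relation of \reth{fadhu} is a product of commutators and hence lies in $\Gamma_{2}(F)$, where $F$ denotes the free group on the generators $\rho_{k,i}$, the Abelianisation $G_{\text{Ab}}\cong\Z^{4g}$ is free on the classes of the $\rho_{k,i}$, and the commutators $[\rho_{k,i},\rho_{l,j}]$ with $(k,i)<(l,j)$ lexicographically span $\Gamma_{2}(G)/\Gamma_{3}(G)$. Next, \relem{commu1}(\ref{it:gamma3c}) identifies each ``cross'' commutator $[\rho_{1,i},\rho_{2,j}]$ with one of $1$, $B$, or $B^{-1}$ modulo $\Gamma_{3}(G)$, reducing the spanning set to $\set{e_{k,i,j}}{k=1,2,\,1\leq i<j\leq 2g}\cup\brak{B}$, of cardinality $2g(2g-1)+1$.

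I would then extract the relations among these generators. Reducing relation~(\ref{it:fadhuI}) of \reth{fadhu} modulo $\Gamma_{3}(G)$ and applying the identity $[a,b^{-1}]\equiv[a,b]^{-1}\pmod{\Gamma_{3}(G)}$ supplied by \relem{commu1}(\ref{it:gamma3a}) yields the pair of congruences
\begin{equation*}
B\equiv-\sum_{i=1}^{g}e_{1,2i-1,2i}\equiv-\sum_{i=1}^{g}e_{2,2i-1,2i}\pmod{\Gamma_{3}(G)},
\end{equation*}
which permit me to solve for $e_{1,2g-1,2g}$ and $e_{2,2g-1,2g}$ in terms of $B$ and the remaining $e_{k,i,j}$. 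The relations~(II)--(VI) of \reth{fadhu} reduce modulo $\Gamma_{3}(G)$ to the cross-commutator identifications already supplied by \relem{commu1}(\ref{it:gamma3c}) and therefore contribute no further relation among the proposed basis elements. The main technical step is to establish freeness of rank $2g(2g-1)-1$: for this I would argue that the natural surjection $\Gamma_{2}(F)/\Gamma_{3}(F)\twoheadrightarrow\Gamma_{2}(G)/\Gamma_{3}(G)$ has kernel equal to the subgroup generated by the classes of the defining relations of \reth{fadhu} (since $\Gamma_{2}(F)/\Gamma_{3}(F)$ is Abelian, conjugation plays no role), and then verify by a direct linear-algebra computation in the free Abelian group $\Gamma_{2}(F)/\Gamma_{3}(F)$ that this kernel is a primitive direct summand whose complement is exactly the free Abelian group on the proposed basis.

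For part~(\ref{it:part2}), with the basis in hand I would use the bilinearity of the commutator bracket on $G/\Gamma_{3}(G)$ to expand
\begin{equation*}
[v,w]\equiv\sum_{k,l,i,j}\left\lvert v\right\rvert_{\rho_{k,i}}\left\lvert w\right\rvert_{\rho_{l,j}}\,[\rho_{k,i},\rho_{l,j}]\pmod{\Gamma_{3}(G)}.
\end{equation*}
The ``same-side'' terms $(k=l)$, after antisymmetrising and regrouping the pairs $(i,j)$ and $(j,i)$, contribute $d_{k,i,j}(v,w)$ to the raw coefficient of each $e_{k,i,j}$. The ``cross-side'' terms $(k\ne l)$ vanish by \relem{commu1}(\ref{it:gamma3c}) except on the two families $[\rho_{2,2k-1},\rho_{1,2k}]=B$ and $[\rho_{2,2k},\rho_{1,2k-1}]=B^{-1}$ (together with their $k\leftrightarrow l$ transposes); a short case-by-case calculation identifies their aggregate contribution to the $B$-coefficient as precisely $\sum_{i=1}^{g}a_{2i-1,2i}(v,w)$, each of the two determinants defining $a_{2i-1,2i}$ being the antisymmetrised product arising from one of the two families. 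Finally, to express $[v,w]$ in the basis of part~(\ref{it:part1}), I would substitute $e_{k,2g-1,2g}\equiv-B-\sum_{i<g}e_{k,2i-1,2i}$ for $k=1,2$ into the raw same-side expansion; this produces the correction $-d_{k,2g-1,2g}(v,w)$ in formula~(\ref{it:part2ii}) and the additional contribution $-d_{1,2g-1,2g}(v,w)-d_{2,2g-1,2g}(v,w)$ to formula~(\ref{it:part2iii}), while the coefficient of $e_{k,i,j}$ for $(i,j)\ne(2t-1,2t)$ in formula~(\ref{it:part2i}) is untouched by the substitution.
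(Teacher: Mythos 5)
Your proposal is correct, and part~(b) is essentially the paper's own argument: the same bilinear expansion of $[v,w]$ over the exponent data, the same identification of the cross terms $[\rho_{2,2k-1},\rho_{1,2k}]=B$, $[\rho_{2,2k},\rho_{1,2k-1}]=B^{-1}$, and the same substitution $e_{k,2g-1,2g}\equiv -B-\sum_{i<g}e_{k,2i-1,2i}$ (the paper's \req{2geq}) to produce the corrections in~(ii) and~(iii). For part~(a), however, you take a genuinely different route. The paper introduces an auxiliary group $G_{12}$ given by an explicit presentation (generators $a_{k,l}$, $b_{k,i,j}$, $\beta$ with $b_{k,i,j}$, $\beta$ central), constructs homomorphisms $G_{12}\to G/\Gamma_3(G)$ and $G\to G_{12}$ that are mutually inverse after passing to $G/\Gamma_3(G)$, and then reads off $\Gamma_2(G_{12})$ from the Abelianisation of $G_{12}$; you instead work upstairs in the free group $F$, using the standard fact that $\Gamma_2(G)/\Gamma_3(G)$ is the quotient of the free Abelian group $\Gamma_2(F)/\Gamma_3(F)$ on basic commutators by the subgroup generated by the relator images, and then reduce to linear algebra. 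Both amount to the same bookkeeping; yours is perhaps more routine and transparent about where the single surviving relation $\sum_i e_{1,2i-1,2i}=\sum_i e_{2,2i-1,2i}$ comes from, while the paper's two-way homomorphism argument avoids any appeal to the structure theory of $\Gamma_2(F)/\Gamma_3(F)$ and delivers the basis directly. Two points in your outline should be tightened: the reason conjugation of relators can be ignored is not merely that $\Gamma_2(F)/\Gamma_3(F)$ is Abelian, but that the relators lie in $\Gamma_2(F)$ and $[F,\Gamma_2(F)]=\Gamma_3(F)$, so all of $F$ acts trivially on $\Gamma_2(F)/\Gamma_3(F)$; and the reduction of the relators (II)--(VI) must be carried out modulo $\Gamma_3(F)$ in the free group rather than by citing \relem{commu1}(\ref{it:gamma3c}), which is a statement in $G/\Gamma_3(G)$ --- the computation is nevertheless immediate, since the correction terms such as $[\rho_{1,k}^{-1},B_{1,2}]$ already lie in $\Gamma_3(F)$ because the word $B_{1,2}$ lies in $\Gamma_2(F)$. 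With those adjustments, the deferred ``direct linear-algebra computation'' is indeed routine and yields exactly the basis and rank $2g(2g-1)-1$ claimed.
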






\begin{proof}\mbox{}
\begin{enumerate}[(a)]
\item Let $G_{12}$ denote the group defined by a presentation with generating set 
\begin{equation*}
\setl{a_{k,1},\ldots,a_{k,2g}, b_{k,i,j},  \beta}{k=1,2,\; 1 \leq i<j\leq 2g, \, i\neq 2g-1 },
\end{equation*}
and defining relations:
\begin{enumerate}[(I)]
\item\label{it:relIG12} $ b_{k,i,j}=[a_{k,i}, a_{k,j}]$ for $k=1,2$, $1\leq i<j\leq 2g$, where $i\neq 2g-1$.
\item $\beta =[a_{k,1},a_{k,2}^{-1}]\cdots [a_{k,2g-1},a_{k,2g}^{-1}]=[a_{2,2i-1},a_{1,2i}]=[a_{1,2i-1},a_{2,2i}]$  for all $k\in\brak{1,2}$ and $1\leq i\leq g$. 
\item\label{it:relIIIG12} $[a_{1,i},a_{2,j}]=1$  for all $1\leq i,j\leq 2g$, where $\brak{i,j}\neq \brak{2t-1,2t}$ for all $1\leq t\leq g$.

\item\label{it:relIVG12} For $k=1,2$ and $1\leq i<j\leq 2g$, the elements $b_{k,i,j}$ and $\beta$ belong to the centre of the group $G_{12}$.
\end{enumerate}

We will construct a homomorphism from $G_{12}$ to $G/\Gamma_3(G)$ and conversely. To define a homomorphism from $G_{12}$ to $G/\Gamma_3(G)$, consider the map defined on the generators of $G_{12}$ by $\beta\longmapsto B$, $a_{k,l}\longmapsto \rho_{k,l}$, and $b_{k,i,j}\longmapsto e_{k,i,j}$ for all $k\in\brak{1,2}$, $1\leq l\leq 2g$ and $1 \leq i<j\leq 2g$. Using Theorem~\ref{th:fadhu} and Lemma~\ref{lem:commu1}, a straightforward calculation shows that the images of the relations of the presentation of $G_{12}$ are satisfied in the group $G/\Gamma_3(G)$, and thus we obtain a homomorphism from $G_{12}$ onto $G/\Gamma_3(G)$. Conversely, consider the map from $\map{\phi}{G}{G_{12}}$ defined on the generators of $G$ by $\rho_{k,j}\longmapsto a_{k,j}$ for all $k\in \brak{1,2}$ and $j\in \brak{1,\ldots,2g}$.
Since $[a_{k,2i-1},a_{k,2i}^{-1}]=a_{k,2i}^{-1}(a_{k,2i}a_{k,2i-1} a_{k,2i}^{-1}a_{k,2i-1}^{-1})a_{k,2i}=a_{k,2i}^{-1} b_{k,2i-1,2i}^{-1}a_{k,2i}=b_{k,2i-1,2i}^{-1}$ for all $k\in\brak{1,2}$ and $1\leq i\leq g$, we conclude from relations~(\ref{it:relIG12}) and~(\ref{it:relIVG12}) above that $\beta$ is central in $G_{12}$. 
Taking the image of relation~(I) of Theorem~\ref{th:fadhu} shows that $\phi(B)=\beta$, and applying $\phi$ to the remaining relations of $G$ and using these two facts about $\beta$, we conclude that $\phi$ extends to a homomorphism of $G$ onto $G_{12}$. Since $\beta$ and the $b_{k,i,j}$ belong to the centre of $G_{12}$, we see that $\Gamma_{2}(G_{12})$ is the Abelian group generated by the $b_{k,i,j}$, and that $\Gamma_3(G_{12})$ is trivial. It follows that $\phi$ factors through $G/\Gamma_3(G)$. Since $\phi([a_{k,i},a_{k,j}])=b_{k,i,j}$ for all $k\in \brak{1,2}$ and $1\leq i<j\leq 2g$, we thus obtain two homomorphisms between $G_{12}$ to $G/\Gamma_3(G)$, where one is the inverse of the other.  In particular, $G_{12}$ and $G/\Gamma_3(G)$ are isomorphic, and hence $\Gamma_{2}(G_{12})$ is isomorphic to $\Gamma_2(G)/\Gamma_3(G)$. By considering the Abelianisation of $G_{12}$, one may check using the relations~(I)--(IV) above that $\Gamma_{2}(G_{12})$ is a free Abelian subgroup of $G_{12}$ with basis $\setr{\beta,b_{k,i,j}}{k=1,2,\; 1\leq i<j\leq 2g,\; i\neq 2g-1}$, and this proves part~(\ref{it:part1}).

\item Let $v,w\in G$, and consider their classes modulo $\Gamma_{3}(G)$, which we also denote by $v,w$ respectively. Then in $G/\Gamma_{3}(G)$, we have 
\begin{equation}\label{eq:vwexpr}
v=\left( 
\prod_{k=1}^2 \left( 
\prod_{i=1}^{2g} \rho_{k,i}^{\left\lvert v \right\rvert_{\rho_{k,i}}}
\right)
\right) \ldotp v' \quad \text{and} \quad w=\left( 
\prod_{k=1}^2 \left( 
\prod_{i=1}^{2g} \rho_{k,i}^{\left\lvert w \right\rvert_{\rho_{k,i}}}
\right)
\right) \ldotp w',
\end{equation}
where $v',w'\in \Gamma_{2}(G)/\Gamma_{3}(G)$. We now calculate the coefficients of $[v,w]$ in the given basis of $\Gamma_{2}(G)/\Gamma_{3}(G)$, noting that $v',w'$ may be ignored since they are central in $G/\Gamma_{3}(G)$. From Lemma~\ref{lem:commu1} and part~(\ref{it:part1}), if $1\leq i<j\leq 2g$ and $k,l\in\brak{1,2}$, we have that
\begin{equation}\label{eq:useful}
[\rho_{k,i}^s, \rho_{l,j}^t]=
\begin{cases}
e_{k,i,j}^{st} & \text{if $k=l$}\\
B^{st} & \text{if $k\neq l$ and $(i,j)=(2t-1,2t)$ for some $t\in \brak{1,\ldots,g}$}\\
1 & \text{if $k\neq l$ and $(i,j)\neq(2t-1,2t)$ for all $t\in \brak{1,\ldots,g}$,}
\end{cases}
\end{equation}
and from relation~(I) of Theorem~\ref{th:fadhu} and Lemma~\ref{lem:commu1}, we have 
\begin{equation}\label{eq:2geq}
[\rho_{k,2g-1},\rho_{k,2g}]=e_{k,1,2}^{-1} \cdots e_{k,2g-3,2g-2}^{-1} B^{-1}.
\end{equation}
Thus if $(i,j)\neq (2t-1,2t)$ for all $t\in \brak{1,\ldots,g}$, we obtain
\begin{align*}
\left\lvert [v,w] \right\rvert_{e_{k,i,j}}=\left\lvert v \right\rvert_{\rho_{k,i}} \left\lvert w \right\rvert_{\rho_{k,j}}-\left\lvert v \right\rvert_{\rho_{k,j}} \left\lvert w \right\rvert_{\rho_{k,i}}=d_{k,i,j}(v,w)
\end{align*}
obtained from the coefficients of $\rho_{k,i}$ and $\rho_{k,j}$ in \req{vwexpr} which gives~(\ref{it:part2i}), while if $i\in \brak{1,\ldots,g-1}$, we obtain an extra term in the expression for the coefficient of $e_{k,2i-1,2i}$ from the coefficients of $\rho_{k,2g-1}$ and $\rho_{k,2g}$ via \req{2geq}, and so
\begin{align*}
\left\lvert [v,w] \right\rvert_{e_{k,2i-1,2i}}=d_{k,2i-1,2i}(v,w)-d_{k,2g-1,2g}(v,w),
\end{align*}
which gives~(\ref{it:part2ii}). Finally, the $B$-coefficient of $[v,w]$ is obtained from three different types of expression: the first emanates from the coefficients of $\rho_{1,2i-1}$ and $\rho_{2,2i}$ for each $1\leq i\leq g$, which gives rise to a coefficient
\begin{equation*}
\begin{vmatrix}
\left\lvert v\right\rvert_{\rho_{1, 2i-1}}& \left\lvert v\right\rvert_{\rho_{2, 2i}}\\
\left\lvert w\right\rvert_{\rho_{1, 2i-1}} & \left\lvert w\right\rvert_{\rho_{2, 2i}}
\end{vmatrix},
\end{equation*}
the second comes from the coefficients of $\rho_{2,2i-1}$ and $\rho_{1,2i}$ for each $1\leq i\leq g$, which gives rise to a coefficient 
\begin{equation*}
\begin{vmatrix}
\left\lvert v\right\rvert_{\rho_{2, 2i-1}}& \left\lvert v\right\rvert_{\rho_{1, 2i}}\\
\left\lvert w\right\rvert_{\rho_{2, 2i-1}} & \left\lvert w\right\rvert_{\rho_{1, 2i}}
\end{vmatrix},
\end{equation*}
and the third is given by the coefficient of $e_{k,2g-1,2g}$ via \req{2geq} for $k\in \brak{1,2}$, which yields a coefficient $-d_{1,2g-1,2g}(v,w)-d_{2,2g-1,2g}(v,w)$. The sum of the first and second coefficients is equal to $a_{2i-1,2i}(v,w)$. Taking the sum of all of these coefficients leads to $\left\lvert [v,w] \right\rvert_{B}$ given in~(\ref{it:part2ii}), and this completes the proof of the proposition. \qedhere
%
\end{enumerate}
\end{proof}

Using Proposition~\ref{prop:central}, we are now in a position to prove 
Theorem~\ref{case5},
 which will follow easily from \repr{solequat}. Consider the quotient of $\Gamma_2(G)/\Gamma_3(G)$ obtained by identifying $e_{1,i,j}$ with $e_{2,i,j}$ for all $1\leq i<j\leq 2g$ and $i\neq 2g-1$. We denote this quotient by $Q$, and the image of $e_{1,i,j}$ and $e_{2,i,j}$ in $Q$ by $e_{i,j}$. By Proposition~\ref{prop:central}, the group $\Gamma_2(G)/\Gamma_3(G)$ is the direct sum of three free Abelian subgroups $H$, $\ang{B}$ and $L$, where $\set{ [\rho_{k,2i-1}, \rho_{k,2i}]}{k=1,2,\, 1 \leq i< g}$ is a basis of $H$, $\brak{B}$ is a basis of $\ang{B}$, and 
\begin{equation*}
\set{ [\rho_{k,i}, \rho_{k,j}]}{\text{$k=1,2$,  $1 \leq i<j\leq 2g$ and $(i,j)\neq (2t-1, 2t)$ for all $t\in \brak{1,\ldots,g}$}}
\end{equation*}
is a basis of $L$. Moreover, $H$ (resp.\ $L$) is the direct sum $H_1\oplus H_2$ (resp.\ $L_1\oplus L_2$) where for $k=1,2$,
$\set{[\rho_{k,2i-1}, \rho_{k,2i}]}{1\leq  i< g}$ is a basis of $H_k$, and 
\begin{equation*}
\set{ [\rho_{k,i}, \rho_{k,j}]}{\text{$1 \leq i<j\leq 2g$ and $(i,j)\neq (2t-1, 2t)$ for all $t\in \brak{1,\ldots,g}$}}
\end{equation*}
is a basis of $L_k$. Observe that the image of $H_1$ (resp.\ $L_1$) in $Q$ coincides with the image of $H_2$ (resp.\ $L_2$). Let $\overline Q=Q\otimes \mathbb{Z}_2$, and let $\overline B$, $\overline H$ and $\overline L$ denote the projection of $B$, $H$ and $L$ respectively in $\overline Q$.   

\begin{prop}\label{prop:solequat} 
Equation~(\ref{eq:main1}) has no solution in $P_{2}(S_{g})$.
\end{prop}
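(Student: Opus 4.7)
The plan is to assume for contradiction that a solution to \req{main1} exists, reduce it modulo $\Gamma_{3}(G)$ to the commutator identity~\reqref{prodcomm} via \repr{maineq}, and then project further to $\overline{Q}=Q\otimes \mathbb{Z}_{2}$ in order to extract a parity contradiction. The first key observation is that each of the four factors on the right-hand side of~\reqref{prodcomm} belongs to $N=\ker(p_{1}\times p_{2})$: the two cross commutators $[\widetilde v_{1},\widetilde w_{2}]$ and $[\widetilde w_{1},\widetilde v_{2}^{-1}]$ by \rerem{summar}(a), and the two mixed factors $X_{1}=\widetilde{\rho}_{1}\rho_{2}\widetilde v_{1}\widetilde w_{1}\widetilde v_{1}^{-1}w_{2}^{-1}$ and $X_{2}=\widetilde{\rho}_{2}\rho_{1}\widetilde v_{2}\widetilde w_{2}\widetilde v_{2}^{-1}w_{1}^{-1}$ by \rerem{summar}(b) (through \reco{furthproj}). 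Since $N$ is the normal closure of $B$ in $G$ and conjugation is trivial modulo $\Gamma_{3}(G)$, the image of $N$ in $M=\Gamma_{2}(G)/\Gamma_{3}(G)$ coincides with $\ang{B}$. Hence the projection of~\reqref{prodcomm} to $\overline{Q}$ lies in $\ang{\overline{B}}$, and reduces to a single equation $\overline{B}=k\cdot \overline{B}$, where $k\in \mathbb{Z}_{2}$ is the mod-$2$ sum of the $B$-coefficients of the four factors; a solution requires $k\equiv 1\pmod{2}$.

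Next, I would compute each of these four $B$-coefficients using \repr{central}(b) together with the relations $\left\lvert \rho_{1} \right\rvert_{\rho_{1,i}}=-\left\lvert \rho_{2} \right\rvert_{\rho_{2,i}}$ and $\left\lvert w_{1} \right\rvert_{\rho_{1,i}}=\left\lvert w_{2} \right\rvert_{\rho_{2,i}}$ from \relem{exp}(b). Setting $n_{j}=\left\lvert \rho_{1} \right\rvert_{\rho_{1,j}}$, $p_{j}=\left\lvert v_{1} \right\rvert_{\rho_{1,j}}$, $q_{j}=\left\lvert v_{2} \right\rvert_{\rho_{2,j}}$ and $u_{j}=\left\lvert w_{1} \right\rvert_{\rho_{1,j}}=\left\lvert w_{2} \right\rvert_{\rho_{2,j}}$, the contributions of the two cross commutators follow directly from \repr{central}(b)(iii), whereas for $X_{1}$ and $X_{2}$ one rewrites $\widetilde{\rho}_{1}\rho_{2}$ and $\widetilde w_{1}w_{2}^{-1}$ as $\prod_{i<j}e_{2,i,j}^{n_{i}n_{j}}$ (resp.\ $\prod_{i<j}e_{2,i,j}^{u_{i}u_{j}}$) times a central commutator remainder via the standard free-nilpotent identity $(\prod x_{i}^{a_{i}})(\prod x_{i}^{-a_{i}}) \equiv \prod_{i<j}[x_{i},x_{j}]^{a_{i}a_{j}}\pmod{\Gamma_{3}}$ (and symmetrically for $k=1$), invoking the substitution $e_{k,2g-1,2g}=-\sum_{i<g}e_{k,2i-1,2i}-B$ coming from relation~(I) of \reth{fadhu}. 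After a direct calculation in which the $i=g$ contribution from the two cross commutators cancels the corresponding boundary contribution of $\left\lvert X_{1}\right\rvert_{B}+\left\lvert X_{2}\right\rvert_{B}$, one arrives at
\[
k \equiv \sum_{i=1}^{g-1}T_{i}\pmod{2}, \qquad T_{i}:=\alpha_{2i-1}\beta_{2i}+\alpha_{2i}\beta_{2i-1},
\]
where $\alpha_{j}:=p_{j}+q_{j}\pmod{2}$ and $\beta_{j}:=u_{j}\pmod{2}$.

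Finally, I would exploit the vanishing of the $e_{k,i,j}$-coefficients of $X_{1}$ and $X_{2}$ (forced by $X_{1},X_{2}\in N$) to obtain the missing parity constraints. Computing $\left\lvert X_{1}\right\rvert_{e_{2,a,b}}$ and $\left\lvert X_{2}\right\rvert_{e_{1,a,b}}$ by the same method, setting both to zero, and taking the difference to eliminate the unknown commutator-remainder coefficients, yields, for each basis pair $(a,b)$, the requirement that $(p_{a}-q_{a})u_{b}-(p_{b}-q_{b})u_{a}$ be even (plus a mirror boundary term $-(p_{2g-1}-q_{2g-1})u_{2g}+(p_{2g}-q_{2g})u_{2g-1}$ in the case $(a,b)=(2t-1,2t)$ with $t<g$). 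Reduced modulo $2$, these relations split into the cross-pair constraints $\alpha_{a}\beta_{b}\equiv \alpha_{b}\beta_{a}\pmod{2}$ for all $(a,b)$ in the basis of $Q$ with $(a,b)\neq (2t-1,2t)$, together with the diagonal relations $T_{t}\equiv T_{g}\pmod{2}$ for $1\leq t<g$. A short case analysis on $\overline{\alpha},\overline{\beta}\in \mathbb{F}_{2}^{2g}$ (according as $\overline{\alpha}=0$, $\overline{\beta}=0$, or the cross-pair constraints force $\overline{\alpha}=\overline{\beta}$) combined with the diagonal relations shows that each $T_{i}$ must vanish modulo $2$, whence $k\equiv 0\pmod{2}$, contradicting $k\equiv 1\pmod{2}$ and completing the proof.

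The technical heart of the argument is the careful bookkeeping of the boundary terms produced by the substitution $e_{k,2g-1,2g}=-\sum_{i<g}e_{k,2i-1,2i}-B$: these terms are precisely what produces the diagonal relations $T_{t}\equiv T_{g}\pmod{2}$ and what realises the cancellation of the $i=g$ contributions in the final formula for $k$. Verifying that the cross-pair constraints suffice, in every case distinguished by the case analysis, to force $T_{g}\equiv 0\pmod{2}$ is the most delicate combinatorial step.
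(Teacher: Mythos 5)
Your proposal is correct and is essentially the paper's own argument: you reduce \req{main1} modulo $\Gamma_{3}(G)$ to \req{prodcomm}, kill the unknown central remainders of $\rho_{i},v_{i},w_{i}$ by working modulo $2$ after identifying $e_{1,i,j}$ with $e_{2,i,j}$ (this is exactly the paper's quotient $\overline{Q}$, where the same cancellation is obtained via the $\iota_{\sigma}$-symmetry), and you reach the same parity contradiction from the exponent-sum determinants of \repr{central} --- your relations $\alpha_{a}\beta_{b}\equiv\alpha_{b}\beta_{a}$, $T_{t}\equiv T_{g}$ and $k\equiv\sum_{i<g}T_{i}\equiv 1$ are precisely the $\overline{e}_{i,j}$- and $\overline{B}$-coefficient conditions the paper extracts from \req{finalqbar}. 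The reorganised steps (using that the image of $N$ in $\Gamma_{2}(G)/\Gamma_{3}(G)$ is $\ang{B}$, extracting the constraints from $X_{1},X_{2}\in N$, and closing with the claim that the cross-pair relations admit at most one non-zero symplectic minor) are sound, but note that your parenthetical trichotomy on $\overline{\alpha},\overline{\beta}$ is not quite the right statement and should be replaced by the columnwise rank-one argument over $\Z_{2}$ (two independent columns in one symplectic block force each column of any other block to be proportional to both, hence zero), and the even cancellation of the unknown remainders must also be checked for the $\overline{B}$-component, not only for the $\overline{e}_{i,j}$-components.
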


\begin{proof} 
We saw previously that equation~(\ref{eq:main1}) is equivalent in turn to \req{main2}, and to \req{comm}, and that its projection onto $G/\Gamma_{3}(G)$ is given by \req{prodcomm}. So 
to show that equation~(\ref{eq:main1}) has no solution in $P_{2}(S_{g})$ it suffices to show that the projection of \req{prodcomm} onto the group $\overline Q$ has no solution.
Now $\overline H, \ang{\overline{B}}$ and $\overline L$ are $\mathbb{Z}_2$-vector spaces of dimension equal to half the rank of $H$ 
(as a free Abelian group), $1$, and half the rank of $L$ (as a free Abelian group) respectively, and we have a decomposition of $\overline Q$ as $\overline H\oplus \ang{\overline{B}}\oplus \overline L$. We have that $\Gamma_2(\overline Q)$ is isomorphic to a sum of $\Z_2$'s; a basis is given by the set
$\set{\overline e_{i,j}, \overline B}{1\leq i<j\leq 2g,\, i\neq 2g-1}$, where $\overline e_{i,j}$ denotes the projection (from $Q$ to $\overline Q$) of $e_{i,j}$.
From now on we study the projection of equation~(\ref{eq:prodcomm}) onto $\overline Q$ (apart from the basis elements of $\overline Q$, notationally we do not distinguish between elements of $\Gamma_2(G)/\Gamma_3(G)$ and their projection into $\overline Q$):
\begin{equation}\label{eq:prodcomm1}
\overline B=[\widetilde v_1, \widetilde w_2]
[\widetilde w_1, \widetilde v_2^{-1}]
(\widetilde \rho_1\rho_2 )   [ \widetilde v_1, \widetilde w_1](\widetilde w_1  w_2^{-1})(\widetilde \rho_2 \rho_1 )[\widetilde v_2,
 \widetilde w_2](\widetilde w_2 w_1^{-1}),
\end{equation}
where each of the factors belongs to $\Gamma_2(\overline Q)$, using \rerem{summar}(\ref{it:summarc}), and so commute pairwise. 
We now examine the various terms appearing in \req{prodcomm1}. 

\begin{enumerate}[(a)]
\item We have $(\widetilde \rho_1\rho_2)(\widetilde \rho_2\rho_1)=
[\widetilde \rho_1,\rho_2]( \rho_2\widetilde \rho_1)(\widetilde \rho_2\rho_1)$.  We claim that $[\widetilde \rho_1,\rho_2]=1$ in $\Gamma_{2}(G)/\Gamma_{3}(G)$, and so in $\Gamma_{2}(\overline Q)$. To prove the claim, we calculate the coefficients of $[\widetilde \rho_1,\rho_2]$ on the basis of $\Gamma_{2}(G)/\Gamma_{3}(G)$ using \repr{central}(\ref{it:part2}). First recall that $\widetilde \rho_1,\rho_2\in \mathbb{F}_{2}$, so
\begin{equation}\label{eq:rho1tilde}
\left\lvert \widetilde \rho_1 \right\rvert_{\rho_{1,i}}=\left\lvert \rho_2 \right\rvert_{\rho_{1,i}}=0 \quad \text{for all $1\leq i\leq 2g$}
\end{equation}
and hence $d_{1,i,j}(\widetilde \rho_1,\rho_2)=\left\lvert [\widetilde \rho_1,\rho_2]\right\rvert_{e_{1,i,j}}=0$ for all $1\leq i<j\leq 2g$ by \repr{central}(\ref{it:part2i}) and~(\ref{it:part2ii}). Further,
\begin{align*}
d_{2,i,j}(\widetilde \rho_1,\rho_2)&=
\left\lvert \widetilde \rho_1 \right\rvert_{\rho_{2,i}} \left\lvert \rho_2 \right\rvert_{\rho_{2,j}}-\left\lvert \widetilde \rho_1 \right\rvert_{\rho_{2,j}} \left\lvert \rho_2 \right\rvert_{\rho_{2,i}}=
\left\lvert \rho_1 \right\rvert_{\rho_{1,i}} \left\lvert \rho_2 \right\rvert_{\rho_{2,j}}-\left\lvert \rho_1 \right\rvert_{\rho_{1,j}} \left\lvert \rho_2 \right\rvert_{\rho_{2,i}}\\
&=
-\left\lvert \rho_2 \right\rvert_{\rho_{2,i}} \left\lvert \rho_2 \right\rvert_{\rho_{2,j}}+\left\lvert \rho_2 \right\rvert_{\rho_{2,j}} \left\lvert \rho_2 \right\rvert_{\rho_{2,i}} \quad\text{by \req{expsumzero}}\\
&=0.
\end{align*}
Finally, 
\begin{align*}
a_{2i-1,2i}(\widetilde \rho_1,\rho_2)&= \left\lvert \widetilde \rho_1 \right\rvert_{\rho_{2,2i-1}} \left\lvert \rho_2 \right\rvert_{\rho_{1,2i}}-\left\lvert \widetilde \rho_1 \right\rvert_{\rho_{2,2i}} \left\lvert \rho_2 \right\rvert_{\rho_{1,2i-1}}+
\left\lvert \widetilde \rho_1 \right\rvert_{\rho_{1,2i-1}} \left\lvert \rho_2 \right\rvert_{\rho_{2,2i}}-\left\lvert \widetilde \rho_1 \right\rvert_{\rho_{1,2i}} \left\lvert \rho_2 \right\rvert_{\rho_{2,2i-1}}\\
&=0,
\end{align*}
using \req{rho1tilde}. So $\left\lvert [\widetilde \rho_1,\rho_2]\right\rvert_{B}=0$, and we conclude that $[\widetilde \rho_1,\rho_2]=1$ in $\Gamma_{2}(G)/\Gamma_{3}(G)$, which proves the claim. 

\item\label{it:rho2tilde} Consider the terms $\rho_2\widetilde \rho_1$ and $\widetilde \rho_2\rho_1$. As an element of $G$, we have that $\rho_2\widetilde \rho_1\in \mathbb{F}_{2}$, and so $\iota_{\sigma}(\rho_2\widetilde \rho_1)=B \widetilde \rho_2 \rho_1 B^{-1}$ by \req{isigma}. Since $\Gamma_{3}(G)$ is characteristic in $G$, $\iota_{\sigma}$ induces an automorphism of $G/\Gamma_{3}(G)$ which we also denote by $\iota_{\sigma}$. But $B\in \Gamma_{2}(G)$, so $\iota_{\sigma}(\rho_2\widetilde \rho_1)=\widetilde \rho_2 \rho_1$ in $\Gamma_{2}(G)/\Gamma_{3}(G)$. Now 
\begin{equation}\label{eq:iotaekij}
\iota_{\sigma}(e_{k,i,j})=e_{k',i,j} \quad \text{for all $1\leq i<j\leq 2g$ and $k,k'\in {1,2}$, where $k\neq k'$.} 
\end{equation}
So $\left\lvert \rho_2\widetilde \rho_1\widetilde \rho_2\rho_1\right\rvert_{B}=\left\lvert \iota_{\sigma}(\widetilde \rho_2\rho_1)\widetilde \rho_2\rho_1\right\rvert_{B}$, and since $\iota_{\sigma}(B)=B$, it follows that $\left\lvert \rho_2\widetilde \rho_1\widetilde \rho_2\rho_1\right\rvert_{B}$ is even. Hence the $\overline B$-coefficient of $\rho_2\widetilde \rho_1\widetilde \rho_2\rho_1$ is zero in $\overline Q$. Using \req{iotaekij}, we see that
\begin{equation*}
\left\lvert \rho_2\widetilde \rho_1\widetilde \rho_2\rho_1\right\rvert_{e_{1,i,j}}= \left\lvert \rho_2\widetilde \rho_1\widetilde \rho_2\rho_1\right\rvert_{e_{2,i,j}} \quad \text{for all $1\leq i<j\leq 2g$,}
\end{equation*}
hence the $\overline{e}_{i,j}$-coefficient of $\rho_2\widetilde \rho_1\widetilde \rho_2\rho_1$ is also zero in $\overline Q$, and thus $\rho_2\widetilde \rho_1\widetilde \rho_2\rho_1$ is trivial in $\overline Q$.

\item Now consider $\widetilde w_1 w_2^{-1}$ and $\widetilde w_2 w_1^{-1}$. We have
$\widetilde w_1 w_2^{-1}\widetilde w_2 w_1^{-1}=(\widetilde w_1 w_2^{-1})^2 w_2 \widetilde w_1^{-1} \widetilde w_2  w_1^{-1}$. Since it is a square, $(\widetilde w_1 w_2^{-1})^2$ is certainly trivial in $\overline Q$. As in case~(\ref{it:rho2tilde}) above, $w_2 \widetilde w_1^{-1} \widetilde w_2  w_1^{-1}$ is also trivial in $\overline Q$.
\end{enumerate}
Hence \req{prodcomm1} reduces to $\overline B=[\widetilde v_1, \widetilde w_2]
[\widetilde w_1, \widetilde v_2^{-1}] [ \widetilde v_1, \widetilde w_1][\widetilde v_2,
 \widetilde w_2]$ in $\overline Q$. Using the results of \relem{commu1}, we can rewrite this as
\begin{equation}\label{eq:finalqbar}
\overline B=[\widetilde v_1 \widetilde v_2, \widetilde w_1\widetilde w_2].
\end{equation}
First suppose that $g=1$. In this case, the basis of $\Gamma_{2}(G)/\Gamma_{3}(G)$ is reduced to $\brak{B}$. Since $\widetilde v_2, \widetilde w_2\in \mathbb{F}_{1}$ and $\widetilde v_1, \widetilde w_1\in \mathbb{F}_{2}$, and using \repr{central}(\ref{it:part2iii}) and \req{expsumzero}, in $\Gamma_{2}(G)/\Gamma_{3}(G)$ we have
\begin{align*}
\left\lvert [\widetilde v_1 \widetilde v_2, \widetilde w_1\widetilde w_2]\right\rvert_{B}=& d_{1,1,2}(\widetilde v_1 \widetilde v_2, \widetilde w_1\widetilde w_2)+d_{2,1,2}(\widetilde v_1 \widetilde v_2, \widetilde w_1\widetilde w_2)+ c_{1,2}(\widetilde v_1 \widetilde v_2, \widetilde w_1\widetilde w_2)\\
=&
\left\lvert \widetilde v_{2} \right\rvert_{\rho_{1,1}} \left\lvert \widetilde w_{2} \right\rvert_{\rho_{1,2}}-
\left\lvert \widetilde v_{2} \right\rvert_{\rho_{1,2}} \left\lvert \widetilde w_{2} \right\rvert_{\rho_{1,1}}+
\left\lvert \widetilde v_{1} \right\rvert_{\rho_{2,1}} \left\lvert \widetilde w_{1} \right\rvert_{\rho_{2,2}}-
\left\lvert \widetilde v_{1} \right\rvert_{\rho_{2,2}} \left\lvert \widetilde w_{1} \right\rvert_{\rho_{2,1}}+\\
& \left\lvert \widetilde v_{2} \right\rvert_{\rho_{1,1}} \left\lvert \widetilde w_{1} \right\rvert_{\rho_{2,2}}-
\left\lvert \widetilde v_{2} \right\rvert_{\rho_{1,2}} \left\lvert \widetilde w_{1} \right\rvert_{\rho_{2,1}}+
\left\lvert \widetilde v_{1} \right\rvert_{\rho_{2,1}} \left\lvert \widetilde w_{2} \right\rvert_{\rho_{1,2}}-
\left\lvert \widetilde v_{1} \right\rvert_{\rho_{2,2}} \left\lvert \widetilde w_{2} \right\rvert_{\rho_{1,1}}\\
=&
\left\lvert v_{2} \right\rvert_{\rho_{2,1}} \left\lvert w_{2} \right\rvert_{\rho_{2,2}}-
\left\lvert v_{2} \right\rvert_{\rho_{2,2}} \left\lvert w_{2} \right\rvert_{\rho_{2,1}}+
\left\lvert v_{1} \right\rvert_{\rho_{1,1}} \left\lvert w_{1} \right\rvert_{\rho_{1,2}}-
\left\lvert v_{1} \right\rvert_{\rho_{1,2}} \left\lvert w_{1} \right\rvert_{\rho_{1,1}}+\\
& \left\lvert v_{2} \right\rvert_{\rho_{2,1}} \left\lvert w_{1} \right\rvert_{\rho_{1,2}}-
\left\lvert v_{2} \right\rvert_{\rho_{2,2}} \left\lvert w_{1} \right\rvert_{\rho_{1,1}}+
\left\lvert v_{1} \right\rvert_{\rho_{1,1}} \left\lvert w_{2} \right\rvert_{\rho_{2,2}}-
\left\lvert v_{1} \right\rvert_{\rho_{1,2}} \left\lvert w_{2} \right\rvert_{\rho_{2,1}}\\
=&
2\left(\left\lvert v_{2} \right\rvert_{\rho_{2,1}} \left\lvert w_{1} \right\rvert_{\rho_{1,2}}-
\left\lvert v_{2} \right\rvert_{\rho_{2,2}} \left\lvert w_{1} \right\rvert_{\rho_{1,1}}+
\left\lvert v_{1} \right\rvert_{\rho_{1,1}} \left\lvert w_{1} \right\rvert_{\rho_{1,2}}-
\left\lvert v_{1} \right\rvert_{\rho_{1,2}} \left\lvert w_{1} \right\rvert_{\rho_{1,1}}\right).
\end{align*}
Thus $[\widetilde v_1 \widetilde v_2, \widetilde w_1\widetilde w_2]$ is trivial in $\overline Q$, which contradicts \req{finalqbar}. So let us suppose that $g>1$. We will derive some restrictions on the element $w_1$ by studying \req{finalqbar} after projecting onto $\overline Q$.  For $i=1,\ldots, 2g$, let $a_{i}=\left\lvert v_{1} \right\rvert_{\rho_{1,i}}$, $b_{i}=\left\lvert \widetilde v_{2} \right\rvert_{\rho_{1,i}}=\left\lvert v_{2} \right\rvert_{\rho_{2,i}}$ and $c_{i}=\left\lvert w_{1} \right\rvert_{\rho_{1,i}}$, and let $d_{i}=a_{i}+b_{i}$.  The right-hand side of \req{finalqbar} may be written as a product of two types of term: $[\widetilde v_l,\widetilde w_m]$, where $l,m\in \brak{1,2}$ and $l\neq m$, and $[\widetilde v_l,\widetilde w_l]$, where $l\in \brak{1,2}$. In the first case, considered as an element of $\Gamma_{2}(G)/\Gamma_{3}(G)$, $[\widetilde v_l,\widetilde w_m]$ gives rise only to terms in $B$ by \req{useful}. In particular, in $\Gamma_{2}(G)/\Gamma_{3}(G)$ $\left\lvert [\widetilde v_l,\widetilde w_m]\right\rvert_{e_{k,i,j}}=0$ for all $k\in \brak{1,2}$ and $1\leq i<j\leq 2g$, and so the $\overline{e}_{i,j}$-coefficient of $[\widetilde v_l,\widetilde w_m]$, considered as an element of $\overline Q$, is zero. It follows from \req{finalqbar} that in $\overline Q$, the $\overline{e}_{i,j}$-coefficient of $[\widetilde v_1,\widetilde w_1][\widetilde v_2,\widetilde w_2]$ is zero for all $ (i,j)\neq (2t-1,2t)$ and $1\leq i<j\leq 2g$. 
But modulo $2$, this coefficient is also given by the sum 
\begin{equation}\label{eq:coeffviwi}
\begin{aligned}
\left\lvert [\widetilde v_2,\widetilde w_2] \right\rvert_{e_{1,i,j}}+ \left\lvert [\widetilde v_1,\widetilde w_1] \right\rvert_{e_{2,i,j}}&=\begin{vmatrix}
\left\lvert v_2\right\rvert_{\rho_{2,i}}& \left\lvert v_2\right\rvert_{\rho_{2,j}}\\
\left\lvert w_2\right\rvert_{\rho_{2,i}}& \left\lvert w_2\right\rvert_{\rho_{2,j}}
\end{vmatrix}+
\begin{vmatrix}
\left\lvert v_1\right\rvert_{\rho_{1,i}}& \left\lvert v_1\right\rvert_{\rho_{1,j}}\\
\left\lvert w_1\right\rvert_{\rho_{1,i}}& \left\lvert w_1\right\rvert_{\rho_{1,j}}
\end{vmatrix}\\
&= \begin{vmatrix}
\left\lvert v_1\right\rvert_{\rho_{1,i}}+ \left\lvert v_2\right\rvert_{\rho_{2,i}}& \left\lvert v_1\right\rvert_{\rho_{1,j}}+ \left\lvert v_2\right\rvert_{\rho_{2,j}}\\
\left\lvert w_1\right\rvert_{\rho_{1,i}}& \left\lvert w_1\right\rvert_{\rho_{1,j}}
\end{vmatrix}= \begin{vmatrix}
d_{i} & d_{j}\\
c_{i} & c_{j}
\end{vmatrix},
\end{aligned}
\end{equation}
using \req{expsumzero}, so $\begin{vmatrix}
\overline{d_{i}} & \overline{d_{j}}\\
\overline{c_{i}} & \overline{c_{j}}
\end{vmatrix}=\overline{0} \pmod 2$.

Suppose that $\overline{c_{i}}=\overline{0} \pmod 2$ (so $c_{i}$ is even) for all $i=1,\ldots,2g$. Since $c_{i}=\left\lvert w_1\right\rvert_{\rho_{1,i}}= \left\lvert w_2\right\rvert_{\rho_{2,i}}$ by \req{expsumzero}, it follows from \repr{central}(\ref{it:part2}) that $d_{k,l,m}(\widetilde v_q,\widetilde w_q)$ is even for all $k,q\in\brak{1,2}$ and $1\leq l<m\leq 2g$. Hence in $\overline Q$, the $\overline{B}$-coefficient of $[\widetilde v_1,\widetilde w_1] [\widetilde v_2,\widetilde w_2]$ is zero, which contradicts 
\req{finalqbar}. Thus there exists $1\leq i \leq 2g$ such that $\overline{c_{i}}\neq \overline{0} \pmod 2$.


Using \repr{central}(\ref{it:part2}), a calculation similar to that of \req{coeffviwi} shows that the $\overline{e}_{1,2}$- (resp.\ $\overline B$-) coefficient of $[\widetilde v_1,\widetilde w_1] [\widetilde v_2,\widetilde w_2]$ is equal to $\begin{vmatrix}
\overline{d_{1}} & \overline{d_{2}}\\
\overline{c_{1}} & \overline{c_{2}}
\end{vmatrix}+\begin{vmatrix}
\overline{d_{2g-1}} & \overline{d_{2g}}\\
\overline{c_{2g-1}} & \overline{c_{2g}}
\end{vmatrix}$. By \req{finalqbar}, this coefficient is equal to $\overline{0}$ (resp.\ $\overline{1}$), so $\begin{vmatrix}
\overline{d_{1}} & \overline{d_{2}}\\
\overline{c_{1}} & \overline{c_{2}}
\end{vmatrix}=\overline{1}$. Hence there exists $l\in \brak{1,2}$ such that $\overline{c_{l}}\neq \overline{0}$. Now for all $m\in \brak{2g-1,2g}$, in $\overline Q$ the $\overline{e}_{l,m}$-coefficient of $[\widetilde v_1,\widetilde w_1] [\widetilde v_2,\widetilde w_2]$ is zero by \req{finalqbar}. By \req{coeffviwi}, this coefficient is equal to $\begin{vmatrix}
\overline{d_{l}} & \overline{d_{m}}\\
\overline{c_{l}} & \overline{c_{m}}
\end{vmatrix}$. Since $\overline{c_{l}}\neq \overline{0}$, this implies that $\begin{vmatrix}
\overline{d_{2g-1}} & \overline{d_{2g}}\\
\overline{c_{2g-1}} & \overline{c_{2g}}
\end{vmatrix}=\overline{0}$, but we know that this is the $\overline{B}$-coefficient in $\overline Q$ of $[\widetilde v_1,\widetilde w_1] [\widetilde v_2,\widetilde w_2]$. This contradicts \req{finalqbar}, and completes the proof of the proposition.
\end{proof}

\begin{proof}[Proof of Theorem~\ref{case5}.]
Consider the homomorphism $\map{\theta_{\tau}}{\pi_{1}(N_{3})}{\Z_{2}}$. Up to equivalence, we may suppose that $\theta_{\tau}$ satisfies one of the three conditions
(\ref{it:caseaaa})--(\ref{it:casecc}) given at the beginning of the discussion of this subcase (\ref{it:subcase4}).

In case~(a), we have $\theta_{\tau}(v)=\overline{0}$. We thus obtain a factorisation of diagram~(\ref{eq:basic}) as in Theorem~\ref{orien},
and so by Proposition~\ref{gen}, the Borsuk-Ulam 
property does not hold for the triple $(X,\tau,S_{g})$. In case~(b), we have
$\theta_{\tau}(v)=\overline{1}$ and  $\theta_{\tau}(a_1)=\theta_{\tau}(a_2)=\overline{0}$, and setting $\phi(v)=\sigma$, $\phi(a_{1})=\rho_{1,1}^{-1}$ and $\phi(a_{2})= \rho_{2,2}$ defines a factorisation of diagram~(\ref{eq:basic}) by the first relation of~(\ref{it:fadhuV}) of Theorem~\ref{th:fadhu}. Applying once more Proposition~\ref{gen}, we see that the Borsuk-Ulam property does not hold for the triple $(X,\tau,S_{g})$. 

Finally, consider case~(c), so $\theta_{\tau} (v)=\theta_{\tau}(a_2)=\overline{1}$ and $\theta_{\tau}(a_1)=\overline{0}$. It follows from Proposition~\ref{prop:solequat} that the non-existence of a solution to \req{main1} implies the non-existence of a solution of  \req{basic6}, and hence by Proposition~\ref{gen}, there is no factorisation of the diagram~(\ref{eq:basic}) by a homomorphism $\phi$. This completes the proof of the theorem.
\end{proof}

\section*{Appendix}

The purpose of this appendix is to reduce the number of cases to be analysed. The results presented here are known to the authors of~\cite{BGHZ}. For the benefit of the reader, we summarise these results and write them in a form that is more suitable for our purposes. Our problem is that of studying the existence of a solution to the algebraic factorisation problem presented in diagram~(\ref{eq:basic}) of Proposition~\ref{gen}. Using the notion of equivalence introduced at the end of \resec{GeBo}, our goal is to reduce the number of surjective homomorphisms $\map{\theta_{\tau}}{\pi_1(X/\tau)}{\Z_2}$ to be analysed, where $\pi_1(X/\tau)$ is isomorphic to the fundamental group of a compact, connected surface without boundary different from $\St[2]$ and $\rp$. We consider two cases, the first (resp.\ second) being that where the surface is orientable (resp.\ non-orientable). In the whole of this appendix, $X$ will be a finite-dimensional $CW$-complex equipped with a free cellular involution $\tau$.

\begin{prop}\label{prop:appendop}
Let $\pi_1(X/\tau)$ be isomorphic to the fundamental group of a compact, connected, orientable surface without boundary different from $\St[2]$ of genus $h$, and consider the presentation of $\pi_1(X/\tau)$ given by 
\begin{equation}\label{eq:prodcommh}
\setangr{a_1,a_2,\ldots,a_{2h-1},a_{2h}}{[a_1,a_2]\cdots[a_{2h-1},a_{2h}]\,}. 
\end{equation}
The existence of a solution to the algebraic factorisation problem of
diagram~(\ref{eq:basic}) of Proposition~\ref{gen} does not depend on the choice of surjective
homomorphism $\map{\theta_{\tau}}{\pi_1(X/\tau)}{P_2(S_g)}$.
In particular, it suffices to study the case  $\theta_{\tau}(a_1)=\overline{1}$ and  $\theta_{\tau}(a_i)=\overline{0}$ for all $1<i\leq 2h$.
\end{prop}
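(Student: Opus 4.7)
The plan proceeds in two stages. First, by the discussion at the end of \resec{GeBo}, if $\theta_\tau$ and $\theta_\tau'$ are equivalent surjective homomorphisms $\pi_1(X/\tau) \to \Z_{2}$, with $\theta_\tau' = \theta_\tau \circ \varphi$ for some automorphism $\varphi$ of $\pi_1(X/\tau)$, then a map $\map{\phi}{\pi_1(X/\tau)}{B_2(S_g)}$ is a factorisation of $\theta_\tau$ in diagram~(\ref{eq:basic}) if and only if $\phi \circ \varphi$ is a factorisation of $\theta_\tau'$. Hence the existence of a factorisation depends only on the equivalence class of $\theta_\tau$, and it suffices to show that all surjective homomorphisms $\pi_1(S_h) \to \Z_2$ form a single equivalence class.

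For this second stage, observe that any such homomorphism factors through the Abelianisation, hence through $H_1(S_h;\Z) \cong \Z^{2h}$ and through its mod-$2$ reduction $H_1(S_h; \F[2]) \cong (\F[2])^{2h}$, the latter having basis given by the classes of $a_1,\ldots,a_{2h}$. Surjective homomorphisms to $\Z_{2}$ correspond bijectively to non-zero vectors of $(\F[2])^{2h}$. The group $\operatorname{Aut}(\pi_1(S_h))$ acts on $H_1(S_h;\Z)$, and by the classical theorem of Dehn-Nielsen-Baer the induced homomorphism $\operatorname{Aut}(\pi_1(S_h)) \to \operatorname{Sp}(2h, \Z)$ relative to the symplectic intersection form is surjective. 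Composing with reduction modulo~$2$, every element of $\operatorname{Sp}(2h, \F[2])$ is realised by an automorphism of $\pi_1(S_h)$.

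It then remains to observe that $\operatorname{Sp}(2h, \F[2])$ acts transitively on non-zero vectors of $(\F[2])^{2h}$: non-degeneracy of the symplectic form ensures that any non-zero $v$ admits a $u$ with $\langle v,u\rangle = \overline{1}$, and the pair $v,u$ extends to a symplectic basis, yielding a symplectic automorphism sending $v$ to $(\overline{1},\overline{0},\ldots,\overline{0})$. Pulling back to $\operatorname{Aut}(\pi_1(S_h))$ via the previous paragraph yields the required equivalence, and in particular reduces the analysis to $\theta_\tau(a_1)=\overline{1}$, $\theta_\tau(a_i)=\overline{0}$ for $i>1$, as claimed.

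The main (if modest) obstacle is the appeal to Dehn-Nielsen-Baer. A more self-contained alternative is to construct directly the automorphisms of $\pi_1(S_h)$ realising a chosen set of generators of $\operatorname{Sp}(2h,\F[2])$---for instance, swaps $(a_{2i-1},a_{2i}) \leftrightarrow (a_{2j-1},a_{2j})$ of handles, inversions within a single handle, and certain mixing transformations between two handles---and to verify in each case that the defining relation $[a_1,a_2]\cdots[a_{2h-1},a_{2h}]=1$ is preserved by the prescribed substitutions.
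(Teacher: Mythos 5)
Your argument is correct, but it is genuinely different from the one in the paper. The paper proves the proposition by pure combinatorial group theory: it exhibits four explicit substitutions of generators (for instance $(a^{\ast},b^{\ast})=(a,ba)$ and $(a^{\ast},b^{\ast})=(aba^{-1},a^{-1})$, plus two four-letter moves mixing adjacent handles) which preserve the relator $[a_1,a_2]\cdots[a_{2h-1},a_{2h}]$ and allow one to push any surjective $\map{\theta_{\tau}}{\pi_1(S_h)}{\Z_2}$ into the normal form $\theta_{\tau}(a_1)=\overline{1}$, $\theta_{\tau}(a_i)=\overline{0}$ for $i>1$; these same moves are then reused verbatim in Proposition~\ref{prop:appendor} for the non-orientable quotients and in the case analyses of Sections~\ref{sec:rootspns} and~\ref{sec:ornotS2}. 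You instead reduce to linear algebra: a surjective homomorphism to $\Z_2$ is a non-zero element of $H^1(S_h;\F[2])$, automorphisms of $\pi_1(S_h)$ realise all of $\operatorname{Sp}(2h,\F[2])$, and $\operatorname{Sp}(2h,\F[2])$ acts transitively on non-zero (co)vectors by Witt's theorem for alternating forms. This is shorter and more conceptual, but it invokes heavier inputs that you should cite or justify more carefully: the surjectivity of $\operatorname{Aut}(\pi_1(S_h))\to\operatorname{Sp}(2h,\Z)$ is the classical surjectivity of the symplectic representation of the mapping class group (Dehn--Nielsen--Baer only identifies $\operatorname{Out}(\pi_1(S_h))$ with the extended mapping class group; the surjectivity onto $\operatorname{Sp}$ is a separate classical fact), you silently use that the reduction $\operatorname{Sp}(2h,\Z)\to\operatorname{Sp}(2h,\F[2])$ is onto, and since automorphisms act on homomorphisms by precomposition you are really acting on covectors, so one should either pass to the transpose action or use the form to identify $H_1$ with $H^1$ equivariantly. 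None of these is a real gap --- all are standard --- and your final remark about realising generators of $\operatorname{Sp}(2h,\F[2])$ by explicit substitutions preserving the relator is essentially the paper's proof; the trade-off is that the paper's elementary route yields explicit automorphisms that it needs again later, while yours buys brevity at the cost of classical machinery (and does not transfer as directly to the non-orientable case, where there are several equivalence classes).
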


\begin{proof}
The following identities 
show that if $[a_1,a_2]\cdots[a_{2h-1},a_{2h}]$ is a product of commutators as in \req{prodcommh} where $\theta_{\tau}(a_{i})\neq \overline{0}$ for some $1\leq i\leq 2h$ then $\pi_1(X/\tau)$ admits a presentation
\begin{equation*}
\setangr{a'_1,a'_2,\ldots,a'_{2h-1},a'_{2h}}{[a'_1,a'_2]\cdots[a'_{2h-1},a'_{2h}]\,}, 
\end{equation*}
where 
\begin{multline}\label{eq:prodcommhprime}
[a_1,a_2]\cdots[a_{2h-1},a_{2h}]=[a'_1,a'_2]\cdots[a'_{2h-1},a'_{2h}], \\
\text{with $\theta_{\tau}(a'_{1})=\overline{1}$ and $\theta_{\tau}(a'_{i})=\overline{0}$ for all $1<i\leq 2h$.}
\end{multline}

\begin{enumerate}[(1)]
\item Let $(a^{\ast}, b^{\ast})=(a,ba)$. Then $[a,b]=[a^{\ast},b^{\ast}]$, and we may assume that
either $\theta_{\tau}(a^{\ast})$ or $\theta_{\tau}(b^{\ast})$ is zero.
\item Let $(a^{\ast}, b^{\ast})=(aba^{-1},a^{-1})$. Then $[a,b]=[a^{\ast},b^{\ast}]$, and we may assume
that  $\theta_{\tau}(a^{\ast})$ is zero.
\item Let $(a^{\ast}, b^{\ast}, c^{\ast}, d^{\ast})=([a,b]c[b,a], [a,b]d[b,a],  a,b)$. Then $[a,b]
[c,d]=[a^{\ast},b^{\ast}][c^{\ast}, d^{\ast}]$, and we may assume that there exists $1 \leq r \leq h$ such that
$\theta_{\tau}(a_i)$ is zero for $i\leq 2r$, and for $i>r$,
$\theta_{\tau}(a_{2i-1})= \overline{0}$ and $\theta_{\tau}(a_{2i})=
\overline{1}$.
\item Let $(a^{\ast}, b^{\ast}, c^{\ast}, d^{\ast})=(ac, c^{-1}bc,  c^{-1}bcb^{-1}c, dc^{-1}b^{-1}c)$.
Then $[a,b][c,d]=[a^{\ast},b^{\ast}][c^{\ast}, d^{\ast}]$ and if $\theta_{\tau}(a)=\theta_{\tau}(c)=\overline{0}$,
$\theta_{\tau}(b)=\theta_{\tau}(d)=\overline{1}$,
we obtain $\theta_{\tau}(a^{\ast})=\theta_{\tau}(c^{\ast})=\theta_{\tau}(d^{\ast})=\overline{0}$ and $\theta_{\tau}(b^{\ast})=\overline{1}$.
\end{enumerate}
Applying these four identities, we see that in order to 
analyse the algebraic factorisation problem for an arbitrary
surjective homomorphism $\theta_{\tau}$, it is sufficient to study the homomorphism $\theta_{\tau}$ given by  $\theta_{\tau}(a_{i})=\overline{1}$ if $i=1$, and $\overline{0}$ otherwise. This concludes the proof.
\end{proof}

\begin{rem}
From the above relations, in the orientable case, we deduce that any two surjective homomorphisms $\pi_1(X/\tau) \to P_2(S_g)$ are equivalent (in the sense given at the end of \resec{GeBo}).
\end{rem}

We now study the non-orientable case.

\begin{prop}\label{prop:appendor}
Suppose that $\pi_1(X/\tau)$ is isomorphic to the fundamental group of a compact, connected, non-orientable surface without boundary different from $\rp$ of genus $h\geq 2$.
\begin{enumerate}[(a)]
\item\label{it:caseaa} Let $h$ be odd, and consider the following presentation:
\begin{equation}\label{eq:orodd}
\pi_1(X/\tau)=\setangr{v,a_1,a_2,\ldots,a_{h-2},a_{h-1}}{v^2\cdot [a_1,a_2]\cdots[a_{h-2},a_{h-1}]}.
\end{equation}
In order to study the algebraic problem, it suffices to consider
the following three subcases:
\begin{enumerate}[(1)]
\item\label{it:casea1} $\theta_{\tau}(v)=\overline{0}$, $\theta_{\tau}(a_1)=\overline{1}$ and $\theta_{\tau}(a_i)=\overline{0}$ for all $i>1$.
\item\label{it:casea2} $\theta_{\tau}(v)=\overline{1}$, and $\theta_{\tau}(a_i)=\overline{0}$   for all $i\geq 1$.
\item\label{it:casea3} $\theta_{\tau}(v)=\overline{1}$, $\theta_{\tau}(a_1)=\overline{1}$ and $\theta_{\tau}(a_i)=\overline{0}$ for all $i>1$.
\end{enumerate}

\item Let $h$ be even, and consider the following presentation:
\begin{equation}\label{eq:oreven}
\pi_1(X/\tau)=\setangr{\alpha, \beta, a_1,a_2,\ldots,a_{2h-3},a_{2h-2}}{\alpha\beta\alpha\beta^{-1}[a_1,a_2] \cdots
[a_{2h-3},a_{2h-2}]}.
\end{equation}
\begin{enumerate}[(I)]
\item\label{it:casebI} If $h=2$ then in order to study the algebraic problem, it suffices to
consider the following subcases:
\begin{enumerate}[(1)]
\item\label{it:caseb1} $\theta_{\tau}(\alpha)=\overline{0}$ and $\theta_{\tau}(\beta)=\overline{1}$.
\item\label{it:caseb2}  $\theta_{\tau}(\alpha)=\overline{1}$ and $\theta_{\tau}(\beta)=\overline{0}$.
\end{enumerate}
\item\label{it:casebII} If $h\geq 4$ then in order to study the algebraic problem, it suffices to
consider the following subcases:
\begin{enumerate}[(1)]
\item\label{it:casec1} $\theta_{\tau}(\alpha)=\overline{0}$, $\theta_{\tau}(\beta)=\overline{1}$, and $\theta_{\tau}(a_i)=\overline{0}$   for all $i\geq
1$.
\item\label{it:casec2}  $\theta_{\tau}(\alpha)=\overline{0}$, $\theta_{\tau}(\beta)=\overline{0}$,  $\theta_{\tau}(a_1)=\overline{1}$ and
$\theta_{\tau}(a_i)=\overline{0}$ for all $i>1$.
\item\label{it:casec3}  $\theta_{\tau}(\alpha)=\overline{1}$, $\theta_{\tau}(\beta)=\overline{0}$, and  $\theta_{\tau}(a_i)=\overline{0}$   for all
$i\geq 1$.
\end{enumerate}
\end{enumerate}
\end{enumerate}
\end{prop}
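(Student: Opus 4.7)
My plan is to mirror the approach of Proposition~\ref{prop:appendop}: I will exhibit a set of explicit substitutions (automorphisms of $\pi_1(X/\tau)$) that preserve the defining relation of the chosen presentation and reduce an arbitrary surjective $\theta_\tau$ to one of the listed canonical forms. The first, immediately useful observation is that identities~(1)--(4) from the proof of Proposition~\ref{prop:appendop} can be applied verbatim to any commutator pair $[a_{2i-1}, a_{2i}]$ appearing in relation~(\ref{eq:orodd}) or~(\ref{eq:oreven}) without touching the remaining generators $v$ (respectively $\alpha,\beta$). Consequently, provided the presentation contains at least one commutator pair, the restriction of $\theta_\tau$ to the $a_i$'s can be reduced so that either all $\theta_\tau(a_i)=\overline{0}$, or $\theta_\tau(a_1)=\overline{1}$ and the remaining $\theta_\tau(a_i)=\overline{0}$.

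For case~(\ref{it:caseaa}) (odd $h\geq 3$), this observation suffices. Splitting according to the value of $\theta_\tau(v)$ and whether the restriction of $\theta_\tau$ to the $a_i$'s is trivial produces exactly the three subcases~(\ref{it:casea1}),~(\ref{it:casea2}) and~(\ref{it:casea3}). Here $\theta_\tau(v)$ is a genuine equivalence invariant because the class of $v$ generates the $\Z_2$-torsion subgroup of $\pi_1(N_h)^{\mathrm{ab}}\cong\Z_2\oplus\Z^{h-1}$, which is characteristic under any automorphism of $\pi_1(N_h)$.

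For case~(\ref{it:casebI}) ($h=2$, Klein bottle), there are no $a_i$'s and I must show that the three non-zero homomorphisms collapse into two equivalence classes. The key identity is the endomorphism $\varphi$ of $\pi_1(K)$ defined by $\varphi(\alpha)=\alpha$, $\varphi(\beta)=\alpha\beta$. Since the defining relation implies $\beta\alpha\beta^{-1}=\alpha^{-1}$, one checks that $\varphi(\alpha\beta\alpha\beta^{-1})=\alpha^{2}\beta\alpha\beta^{-1}\alpha^{-1}=\alpha^{2}\cdot\alpha^{-1}\cdot\alpha^{-1}=1$, so $\varphi$ is well-defined; Hopficity of the surface group $\pi_1(K)$ then ensures that $\varphi$ is an automorphism. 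Under $\varphi$ the pair $(\theta_\tau(\alpha),\theta_\tau(\beta))$ transforms into $(\theta_\tau(\alpha),\theta_\tau(\alpha)+\theta_\tau(\beta))$, so the configuration $(\overline{1},\overline{1})$ is equivalent to subcase~(\ref{it:caseb2}), leaving only subcases~(\ref{it:caseb1}) and~(\ref{it:caseb2}) to be analysed.

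For case~(\ref{it:casebII}) ($h\geq 4$ even), I will combine the Klein bottle automorphism $\varphi$ (extended to the full presentation by fixing every $a_i$; the verification that it is an automorphism is identical) with the orientable identities on the $a_i$'s, and introduce one further mixing identity that absorbs a non-zero value of $\theta_\tau$ on some $a_i$ into $\theta_\tau(\beta)$ when $\theta_\tau(\alpha)=\overline{1}$. Once this mixing identity is established, the case analysis splits cleanly on $\theta_\tau(\alpha)\in\{\overline{0},\overline{1}\}$, which is an invariant for the same reason as in case~(\ref{it:caseaa}), and then on the pair consisting of $\theta_\tau(\beta)$ and the restriction of $\theta_\tau$ to the $a_i$'s; the three listed subcases then exhaust all possibilities. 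The main obstacle I anticipate is writing down the explicit mixing identity, since the standard commutator manipulations of Proposition~\ref{prop:appendop} must be adapted to interact with the twisted sub-relation $\alpha\beta\alpha\beta^{-1}$ rather than an ordinary commutator.
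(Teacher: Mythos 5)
Parts (a) and (b)(I) of your plan are sound and essentially coincide with the paper's own argument: the paper reduces the values on the $a_i$'s by the transformations of Proposition~\ref{prop:appendop} applied only to the commutator block, and for the Klein bottle it uses the substitution $(\alpha,\beta)\mapsto(\alpha,\beta\alpha)$, which preserves the twisted commutator $[\alpha,\beta]'=\alpha\beta\alpha\beta^{-1}$ exactly; your $\beta\mapsto\alpha\beta$ works just as well. The genuine gap is in case (b)(II), and it is twofold. First, the entire technical content of that case is precisely the ``mixing identity'' you leave as an anticipated obstacle: the paper exhibits it explicitly, namely $(a^{\ast},b^{\ast},c^{\ast},d^{\ast})=\bigl(acac^{-1}a^{-1},\; aca^{-1}c^{-1}bac^{-1}a^{-1},\; aca^{-1},\; da^{-1}\bigr)$, which satisfies $[a^{\ast},b^{\ast}]'[c^{\ast},d^{\ast}]=[a,b]'[c,d]$ (equation~(\ref{eq:subsast})); applied with $(a,b,c,d)=(\alpha,\beta,a_1,a_2)$ it replaces $\bigl(\theta_{\tau}(\alpha),\theta_{\tau}(\beta),\theta_{\tau}(a_1),\theta_{\tau}(a_2)\bigr)$ by $\bigl(\theta_{\tau}(\alpha),\theta_{\tau}(\beta)+\theta_{\tau}(a_1),\theta_{\tau}(a_1),\theta_{\tau}(a_2)+\theta_{\tau}(\alpha)\bigr)$. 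Without producing such an identity (or an equivalent explicit automorphism) the proof of (b)(II) simply is not there.

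Second, even as a plan your mixing move is mis-aimed: you only propose to absorb a value on some $a_i$ into $\theta_{\tau}(\beta)$ \emph{when} $\theta_{\tau}(\alpha)=\overline{1}$. After your reductions the configuration $\theta_{\tau}(\alpha)=\overline{0}$, $\theta_{\tau}(\beta)=\overline{1}$, $\theta_{\tau}(a_1)=\overline{1}$, $\theta_{\tau}(a_i)=\overline{0}$ for $i>1$ survives; it is not one of the three listed subcases, and none of your stated tools can alter it (the Klein-bottle automorphism only adds $\theta_{\tau}(\alpha)=\overline{0}$ to $\theta_{\tau}(\beta)$, hence does nothing, and the orientable identities never touch $\beta$). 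Nor can it be pushed into subcase~(1): it is in fact equivalent to subcase~(2), and the only mechanism the paper has for that merge is the substitution above, which adds $\theta_{\tau}(a_1)$ to $\theta_{\tau}(\beta)$ \emph{irrespective} of the value of $\theta_{\tau}(\alpha)$ (the same substitution, combined with the orientable moves, also merges $\theta_{\tau}(\alpha)=\overline{1}$, $\theta_{\tau}(a_1)=\overline{1}$ with subcase~(3), which is the case your proposed move was aimed at). So your concluding claim that ``the three listed subcases then exhaust all possibilities'' fails as stated; you need the mixing identity to be available when $\theta_{\tau}(\alpha)=\overline{0}$ as well, i.e.\ essentially the paper's equation~(\ref{eq:subsast}).
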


\begin{proof}\mbox{}
\begin{enumerate}[(a)]
\item Let $h\geq 3$ be odd. Suppose first that $\theta_{\tau}(v)=\overline{0}$. By the relation of the presentation given by~\req{orodd}, we must have $\theta_{\tau}(a_i)=\overline{1}$ for some $i$. Using the transformations of the proof of \repr{appendop}, we may assume that $\theta_{\tau}(a_i)=\overline{1}$ if $i=1$ and zero if $i>1$, which is case~(\ref{it:casea1}). Now suppose that $\theta_{\tau}(v)=\overline{1}$. One possibility is that $\theta_{\tau}(a_i)=\overline{0}$ for all $i\geq 1$, which is case (\ref{it:casea2}). Now suppose that  for some $1\leq i\leq h-1$, we have $\theta_{\tau}(a_i)=\overline{1}$. Again using the transformations of the proof of \repr{appendop},  we
may assume that $\theta_{\tau}(a_i)=\overline{1}$ if $i=1$ and zero if $i>1$, which is case (\ref{it:casea3}). This completes the proof of part~(\ref{it:caseaa}).

\item  If $a,b\in \pi_{1}(X/\tau)$, let $[a,b]'=abab^ {-1}$ denote their twisted commutator. 

\begin{enumerate}[(I)]
\item Let $h=2$. Then there are three surjective  homomorphisms:
\begin{enumerate}[(i)]
\item $\theta_{\tau}(\alpha)=\overline{0}$ and $\theta_{\tau}(\beta)=\overline{1}$, which is case (\ref{it:caseb1}).
\item $\theta_{\tau}(\alpha)=\overline{1}$ and $\theta_{\tau}(\beta)=\overline{0}$, which is case (\ref{it:caseb2}).
\item $\theta_{\tau}(\alpha)=\theta_{\tau}(\beta)=\overline{1}$.
\end{enumerate}
Now if we let $(\alpha^{\ast},\beta^{\ast})=(\alpha, \beta\alpha)$, then we have $[\alpha,
\beta]'=[\alpha^{\ast}, \beta^{\ast}]'$. This shows that the second and third homomorphisms are equivalent, and this completes the proof of part~(\ref{it:casebI}).

\item Let $h\geq 4$. First we reduce the number of cases to five.  Arguing as in the case $h=2$ on the values of $\theta_{\tau}$ on $\alpha, \beta$, we see that we may reduce 
to the following cases:
\begin{enumerate}[(i)]
\item $\theta_{\tau}(\alpha)=\overline{0}=\theta_{\tau}(\beta)=\overline{0}$.
\item $\theta_{\tau}(\alpha)=\overline{0}$ and $\theta_{\tau}(\beta)=\overline{1}$.
\item $\theta_{\tau}(\alpha)=\overline{1}$ and $\theta_{\tau}(\beta)=\overline{0}$.
\end{enumerate}
For the first case
$\theta_{\tau}(\alpha)=\overline{0}=\theta_{\tau}(\beta)=\overline{0}$, we must have $\theta_{\tau}(a_i)=\overline{1}$ for some $1\leq i\leq 2h-2$. It then follows
from the proof of \repr{appendop} that we may assume that
$\theta_{\tau}(a_i)=\overline{1}$ if $i=1$ and zero if $i>1$. 
For the second case, $\theta_{\tau}(\alpha)=\overline{0}$ and $\theta_{\tau}(\beta)=\overline{1}$, we can either have $\theta_{\tau}(a_i)=\overline{0}$ for all $1\leq i\leq 2h-2$, or $\theta_{\tau}(a_i)=\overline{1}$ for some $1\leq i\leq 2h-2$. In the latter case, again
by the proof of \repr{appendop}, we may assume that
$\theta_{\tau}(a_i)=\overline{1}$ if $i=1$ and zero if $i>1$.
The third case $\theta_{\tau}(\alpha)=\overline{1}$ and $\theta_{\tau}(\beta)=\overline{0}$ is completely
analogous to the second case, and so the three cases above yield a total of five subcases:
\begin{enumerate}[(i)]
\item $\theta_{\tau}(\alpha)=\overline{0}$, $\theta_{\tau}(\beta)=\overline{1}$ and $\theta_{\tau}(a_i)=\overline{0}$   for all $i\geq 1$, which is case~(\ref{it:casec1}).
\item  $\theta_{\tau}(\alpha)=\theta_{\tau}(\beta)=\overline{0}$,  $\theta_{\tau}(a_1)=\overline{1}$ and $\theta_{\tau}(a_i)=\overline{0}$ for all $i>1$, which is case~(\ref{it:casec2}).

\item  $\theta_{\tau}(\alpha)=\overline{0}$, $\theta_{\tau}(\beta)=\overline{1}$,  $\theta_{\tau}(a_1)=\overline{1}$ and
$\theta_{\tau}(a_i)=\overline{0}$ for all $i>1$.
\item  $\theta_{\tau}(\alpha)=\overline{1}$, $\theta_{\tau}(\beta)=\overline{0}$, and  $\theta_{\tau}(a_i)=\overline{0}$   for all $i\geq 1$, which is case~(\ref{it:casec3}).
\item    $\theta_{\tau}(\alpha)=\overline{1}$, $\theta_{\tau}(\beta)=\overline{0}$,  $\theta_{\tau}(a_1)=\overline{1}$ and $\theta_{\tau}(a_i)=\overline{0}$ for all $i>1$.
\end{enumerate}
We now reduce these five cases to three.
Let  
\begin{equation}\label{eq:subsast}
\left(a^{\ast}, b^{\ast}, c^{\ast}, d^{\ast}\right)=\left(acac^{-1}a^{-1}, aca^{-1}c^{-1}bac^{-1}a^{-1},
aca^{-1}, da^{-1}\right).
\end{equation}
Then 
\begin{align*}
[a^{\ast},b^{\ast}]'[c^{\ast}, d^{\ast}]=&
acac^{-1}a^{-1} aca^{-1}c^{-1}bac^{-1}a^{-1} acac^{-1}a^{-1}
aca^{-1}b^{-1}cac^{-1}a^{-1}\ldotp\\
&   aca^{-1} da^{-1} ac^{-1}a^{-1} ad^{-1}=[a,b]'[c, d].
\end{align*}
The substitution~\reqref{subsast} shows that among the above five subcases, the second subcase is equivalent to the third, and the fourth is equivalent to
 $\theta_{\tau}(\alpha)=\overline{1}$, $\theta_{\tau}(\beta)=\overline{0}$, and  $\theta_{\tau}(a_2)=\overline{1}$ and
$\theta_{\tau}(a_i)=\overline{0}$ for $i\neq 2$. But from  the proof of  \repr{appendop},
this is equivalent to  $\theta_{\tau}(\alpha)=\overline{1}$, $\theta_{\tau}(\beta)=\overline{0}$,
and  $\theta_{\tau}(a_1)=\overline{1}$ and $\theta_{\tau}(a_i)=\overline{0}$ for $i>1$, which is the fifth subcase. This completes the proof of part~(\ref{it:casebII}), and thus that of the proposition.\qedhere
\end{enumerate}
\end{enumerate}
\end{proof}

\begin{rem} 
For each of the three cases ($h$ odd, $h=2$ and $h\geq 4$ even) listed 
above in \repr{appendor}, the corresponding subcases are not equivalent. 
To see this, let us first consider the case $h=2$. Using a set of generators 
for $\operatorname{Out}(N_2)$, it follows that the two subcases are not equivalent. For the 
case $h$ odd we use the following observations. It is a general fact that an 
automorphism of $\pi_1(N_h)$ maps orientable loops to orientable loops and 
non-orientable loops to non-orientable loops. Moreover, consider the induced 
automorphism on the Abelianisation of $\pi_1(N_h)$. Since the class of 
the generator $v$ given in the presentation of $\pi_1(N_h)$    
generates the torsion part of the Abelianisation of $\pi_1(N_h)$, the subgroup generated by 
the class of $v$ is invariant under any homomorphism.  These two facts tell us that the class 
of $v$ in the Abelianisation is mapped into itself, and that the subgroup generated 
by the classes of the elements $a_1,\ldots,a_{h-1}$ is also invariant. A straightforward analysis using these two 
properties shows that  the three subcases cannot be equivalent. 
The last case, $h\geq 4$ even, can be obtained by arguing in a similar way, and is left to the reader.
\end{rem}

\end{document}